\newtheorem{theorem}{Theorem}[section]
\newtheorem{lemma}[theorem]{Lemma}
\newtheorem{condition}[theorem]{Condition}
\newtheorem{proposition}[theorem]{Proposition}
\newtheorem{corollary}[theorem]{Corollary}
\theoremstyle{remark}
\numberwithin{equation}{section}
\newcommand{\ecc}{\ensuremath{\textswab{h}}}
\newcommand{\edge}{\ensuremath{\mathrm{edge}}}
\newcommand{\vcc}{\ensuremath{h}}
\newcommand{\bbC}{\ensuremath{\mathbb{C}}}
\newcommand{\mo}{{-1}}
\newcommand{\calA}{\ensuremath{\mathcal A}}
\newcommand{\calB}{\ensuremath{\mathcal B}}
\newcommand{\calG}{\ensuremath{\mathcal{G}}}
\newcommand{\scrO}{\ensuremath{\mathscr{O}}}
\newcommand{\scrV}{\ensuremath{\mathscr{V}}}
\newcommand{\vol}{\ensuremath{\mathrm{vol}}}
\newcommand{\supp}{\ensuremath{\mathrm{supp}}}
\newcommand{\bbR}{\ensuremath{\mathbb{R}}}
\newcommand{\str}{\ensuremath{\mathrm{str}}}
\newcommand{\sml}{\ensuremath{\mathrm{sml}}}
\newcommand{\calO}{\ensuremath{\mathcal{O}}}
\newcommand{\calI}{\ensuremath{\mathcal I}}
\title{On the spectral gap of Cayley graphs}
\author{Jyoti Prakash Saha}
\address{Department of Mathematics, Indian Institute of Science Education and Research Bhopal, Bhopal Bypass Road, Bhauri, Bhopal 462066, Madhya Pradesh, India}
\curraddr{}
\email{jpsaha@iiserb.ac.in}
\thanks{}
\date{\today}
\subjclass[2010]{05C25, 05C50}
\keywords{Spectral gap, Cayley graphs, vertex-transitive graphs, Cayley sum graphs}
\begin{document}

\maketitle

\begin{abstract}
Let $\Gamma$ be a Cayley graph, or a Cayley sum graph, or a twisted Cayley graph, or a twisted Cayley sum graph, or a vertex-transitive graph. Suppose  $\Gamma$ is undirected and non-bipartite. Let $\mu$ (resp. $\mu_2$) denote the smallest (resp. the second largest) eigenvalue of the normalized adjacency operator of $\Gamma$, and $d$ denote the degree of $\Gamma$. We show that $1+ \mu = \Omega((1-\mu_2)/d)$ holds. 
\end{abstract}

\section{Introduction}

The discrete Cheeger--Buser inequality, established by Dodziuk \cite{DodziukDifferenceEqnIsoperimetricIneq}, Alon and Milman \cite{AlonMilmanIsoperiIneqSupConcen}, and Alon \cite{AlonEigenvalueExpand}, states that the gap between the largest and the second largest eigenvalue $\mu_2$ of the normalized adjacency operator of a $d$-regular graph $\Gamma$, and its edge Cheeger constant $\ecc_\Gamma$ satisfy 
$$
2\ecc_\Gamma 
\geq 1 - \mu_2 
\geq \frac {\ecc_\Gamma^2}2.$$
The dual Cheeger inequality, established by Trevisan \cite[Equation (9)]{TrevisanMaxCutTheSmallestEigenVal} and Bauer and Jost \cite[p. 804--805]{BauerJostBipartiteNbdGraphsSpecLaplaceOp} using a technique due to Desai and Rao \cite{DesaiRaoCharacSmallestEigenvalue}, provides a similar bound relating the gap between $-1$ and the smallest eigenvalue $\mu_n$ of the normalized adjacency operator of $\Gamma$ and the edge bipartiteness constant $\beta_{\edge, \Gamma}$ of $\Gamma$ through the inequality 
$$
2\beta_{\edge, \Gamma} \geq 1 + \mu_n
\geq 
\frac{\beta_{\edge, \Gamma}^2}2.
$$
Breuillard, Green, Guralnick and Tao showed qualitatively that for Cayley graphs, combinatorial expansion implies two-sided spectral expansion \cite[Appendix E]{BGGTExpansionSimpleLie}. Biswas obtained a quantitative bound on the lower spectral gap of non-bipartite, connected Cayley graphs \cite{BiswasCheegerCayley}. Moorman, Ralli and Tetali improved the bound due to Biswas. They proved that the smallest non-trivial eigenvalue of the normalized adjacency operator of a Cayley graph $\Gamma$ of degree $d$ is $-1 + \Omega(\frac{h_\Gamma^2}{d^2})$ where $\vcc_\Gamma$ denotes the vertex Cheeger constant of $\Gamma$ \cite[Theorem 2.6]{CayleyBottomBipartite}. Recently, Hu and Liu \cite[Theorem 1.3]{HuLiuVertexIsoperimetrySignedGraph} proved a bound of the form $\Omega (\frac{\vcc_\Gamma^2}d)$ on the lower spectral gap of non-bipartite Cayley sum graphs and non-bipartite vertex-transitive graphs of degree $d$. Further, Li and Liu \cite{LiLiuNonTrivExtremalEigenvalues} showed that the lower spectral gap of a finite, non-bipartite, vertex-transitive graph $\Gamma$ of degree $d$ is at least $\min \{ \frac 2d, \frac {(\sqrt 3-1)^2} 8 \ecc_\Gamma^2\}$. The lower spectral gap for Cayley graphs, twisted Cayley graphs, twisted Cayley sum graphs, vertex-transitive graphs were also studied by Biswas and the author \cite{CheegerCayleySum,CheegerTwisted,VertexTra}, Hu and Liu \cite{HuLiuVertexIsoperimetrySignedGraph}, Li and Liu \cite{LiLiuNonTrivExtremalEigenvalues}, and the author \cite{LowerSpecGap}. 

In \cite[Appendix E]{BGGTExpansionSimpleLie}, Breuillard, Green, Guralnick and Tao used a ``pivoting technique'' due to Fre\u{\i}man \cite{FreimanGroupsInverseProb} to prove that the lower spectral gap of any undirected, non-bipartite Cayley graph is bounded from the below in terms of the Cheeger constant and the degree. This technique has been used crucially in all the above-mentioned works on the lower spectral gap that use ``combinatorial'' arguments, except the recent work of Li and Liu \cite{LiLiuNonTrivExtremalEigenvalues}, which uses ``spectral'' arguments. An important step in the combinatorial proofs is to consider a suitable dichotomy. When the dichotomy holds, one applies Fre\u{\i}man's technique and produces an index two subgroup of the underlying group, whose cosets would form a bipartition of the graph if the lower spectral gap were ``small'', and consequently, the lower spectral gap has to be ``large''. Depending on the context, one proves that a suitable dichotomy has to hold, or that the lower spectral gap is indeed ``large'' in the absence of a dichotomy. 

Moorman, Ralli and Tetali introduced a new combinatorial approach to obtain lower bounds on the lower spectral gap by controlling the bipartiteness constant. First, using Fre\u{\i}man's technique, they proved that the bipartiteness constant of a non-bipartite, connected Cayley graph is larger than a quantity depending on the Cheeger constant and the degree of the graph. Next, they used the dual Cheeger inequality to show that the lower spectral gap of such a graph is larger than a quantity depending on the Cheeger constant and the degree of the graph. 

The aim of this article is to show that for a connected, non-bipartite Cayley graph $\Gamma$, its \emph{lower spectral gap}, i.e., the gap between between $-1$ and the smallest eigenvalue of the normalized adjacency operator $A_\Gamma$ of $\Gamma$ is bounded from the below in terms of the \emph{spectral gap}, i.e., the gap between $1$ and the second smallest eigenvalue of $A_\Gamma$, and the degree of $\Gamma$. We use the strategy of Fre\u{\i}man crucially, together with suitable refinements of some of the techniques and counting arguments from \cite{BGGTExpansionSimpleLie, CayleyBottomBipartite}, and from the subsequent work \cite{VertexTra} to prove the following result. 

\begin{theorem}
\label{Thm:MainIntro}
Let $\Gamma$ be a Cayley graph, or a Cayley sum graph, or a twisted Cayley graph, or a twisted Cayley sum graph, or a vertex-transitive graph. Suppose  $\Gamma$ is undirected, connected and non-bipartite. Let $\mu$ (resp. $\mu_2$) denote the smallest (resp. the second largest) eigenvalue of the normalized adjacency  operator of $\Gamma$, and $d$ denote the degree of $\Gamma$. 
Then 
$$1 + \mu\geq \frac {1- \mu_2} {50000d}
$$
holds. Moreover, if the underlying group of $\Gamma$ is a finite simple group or it is of odd order, then 
$$1 + \mu\geq \frac {1- \mu_2} {2525}$$
holds. 
\end{theorem}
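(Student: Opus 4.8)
The plan is to combine Freĭman's pivoting technique with the dual Cheeger inequality, following the strategy pioneered by Moorman–Ralli–Tetali but pushing the counting through uniformly for all five classes of graphs. The key observation is that both the lower spectral gap $1+\mu$ and the combinatorial quantity we will produce are governed by the same bipartiteness-type invariant, so it suffices to prove a clean inequality of the form $1+\mu \geq c\,\beta_{\edge,\Gamma}^2$ on one side (the dual Cheeger inequality, already available) and a reverse-type estimate $\beta_{\edge,\Gamma}^2 \geq c'\,(1-\mu_2)/d$ on the other. Since $1-\mu_2$ is itself controlled by the edge Cheeger constant via Cheeger–Buser, the real work is to show that \emph{the edge bipartiteness constant of a non-bipartite $\Gamma$ cannot be too small relative to $\ecc_\Gamma/d$}: this is where Freĭman's technique enters.

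First I would set up notation: fix a symmetric generating multiset $S$ (or the relevant twisted/sum-graph data), and let $(V_0,V_1)$ be a partition of the group realizing (nearly) the edge bipartiteness constant, with the ``bad'' edge set $E$ consisting of edges inside $V_0$ or inside $V_1$, so $|E|$ is small. The goal is to derive a contradiction with non-bipartiteness when $|E|$ is too small. Following the pivoting argument, I would look at the symmetric difference structure: for most generators $s\in S$ and most $x$, translation by $s$ should flip the side of $x$; the set of ``pivots'' where this fails is small. One then studies the set $T$ of group elements $t$ such that left-translation by $t$ almost-preserves the partition up to swapping $V_0\leftrightarrow V_1$, shows $T$ is large (a positive proportion of the group, quantified by $\ecc_\Gamma/d$), and then that $T\cdot T^{-1}$ is contained in a set that almost-preserves the partition \emph{without} swapping — i.e. an approximate index-two subgroup. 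Kneser-type or Freĭman $3k-4$ reasoning, or the direct combinatorial argument from \cite{BGGTExpansionSimpleLie,CayleyBottomBipartite,VertexTra}, then upgrades this to an actual index-two subgroup $H$ whose cosets bipartition $\Gamma$, contradicting non-bipartiteness unless $|E|$ was already of order $\ecc_\Gamma|V|/d$. This yields $\beta_{\edge,\Gamma} = \Omega(\ecc_\Gamma/d)$.

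Next I would chain the inequalities. From $\beta_{\edge,\Gamma} = \Omega(\ecc_\Gamma/d)$ and the dual Cheeger bound $1+\mu \geq \beta_{\edge,\Gamma}^2/2$ we get $1+\mu = \Omega(\ecc_\Gamma^2/d^2)$; combining with the Buser side $1-\mu_2 \leq 2\ecc_\Gamma$ gives $1+\mu = \Omega((1-\mu_2)^2/d^2)$, which is weaker than claimed. To reach the stated linear-in-$(1-\mu_2)$, over-$d$ bound I instead need the sharper route: use the Cheeger \emph{lower} bound $1-\mu_2 \geq \ecc_\Gamma^2/2$ to convert back, so that $\ecc_\Gamma \geq \sqrt{2(1-\mu_2)}$ is \emph{not} what helps — rather, one observes $\ecc_\Gamma \geq (1-\mu_2)/2$ trivially from Buser, hence $\beta_{\edge,\Gamma} = \Omega((1-\mu_2)/d)$, and then one must use a \emph{linear} dual-Cheeger-type estimate valid in this regime, namely $1+\mu \geq \beta_{\edge,\Gamma}$ when $\beta_{\edge,\Gamma}$ is already comparable to the quantity $(1-\mu_2)/d$, or more carefully track constants so that the square is absorbed. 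Concretely I expect the argument to produce $1+\mu \geq \beta_{\edge,\Gamma}^2/2 \geq \tfrac12(\text{const}\cdot(1-\mu_2)/d)^2$ and then use $1-\mu_2 \leq 2$ to linearize one factor, giving $1+\mu \geq \text{const}'\cdot(1-\mu_2)/d$ with the explicit $1/50000$; the odd-order and simple-group cases improve the Freĭman step (index-two subgroups are more constrained, or nonexistent for simple groups, forcing a larger $\beta_{\edge,\Gamma}$ independent of $d$), giving the $1/2525$ bound.

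The main obstacle, as I see it, is making the Freĭman pivoting argument work \emph{uniformly} and with \emph{explicit, small constants} across all five graph types — Cayley, Cayley sum, twisted Cayley, twisted Cayley sum, and vertex-transitive. The twisted and sum variants break the straightforward group-translation symmetry, so the ``pivot set'' analysis must be redone with the twisting automorphism (or the sum-graph sign) carried along, and the vertex-transitive case requires replacing group translations by the transitive automorphism group and arguing about stabilizers. Keeping the accumulated constants under $50000$ through the chain Cheeger $\to$ pivoting $\to$ approximate subgroup $\to$ genuine index-two subgroup $\to$ dual Cheeger will require careful bookkeeping and is likely the most delicate part; I would handle the five cases by isolating the common combinatorial core (a statement purely about a set with small ``doubling defect'' relative to a near-bipartition) and feeding each graph type into it as a black box.
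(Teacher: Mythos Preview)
Your proposal has a genuine gap in the inequality chain, and it is precisely the gap that the paper's main new idea is designed to close.

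You propose to prove $\beta_{\edge,\Gamma} = \Omega(\ecc_\Gamma/d)$ via Fre\u{\i}man pivoting, then feed this into the standard dual Cheeger inequality $1+\mu \geq \beta_{\edge,\Gamma}^2/2$. This yields $1+\mu = \Omega(\ecc_\Gamma^2/d^2)$, and using $\ecc_\Gamma \geq (1-\mu_2)/2$ gives only $1+\mu = \Omega((1-\mu_2)^2/d^2)$. Your ``linearization'' step---``use $1-\mu_2 \leq 2$ to linearize one factor''---is in the wrong direction: from $(1-\mu_2)^2 \leq 2(1-\mu_2)$ you get an \emph{upper} bound on the quantity you are trying to bound from below, so nothing follows. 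There is no way to pass from $1+\mu \geq c(1-\mu_2)^2/d^2$ to $1+\mu \geq c'(1-\mu_2)/d$ without an additional hypothesis, and in fact the quadratic-over-$d^2$ bound is exactly what the Moorman--Ralli--Tetali route gives; the whole point of the theorem is to do strictly better. The same problem recurs in your treatment of the simple/odd-order case.

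What the paper does instead is bypass the quadratic dual Cheeger inequality entirely. It works directly with a nowhere-vanishing eigenfunction $f$ for the eigenvalue $d\mu$ and proves a \emph{linear} estimate (\cref{Lemma:LowerBddForKappaGeneral}): writing $\varkappa = (1+\mu)/(1-\mu_2)$, one has $\varkappa \geq \beta/2$, where $\beta$ is the bipartiteness constant attached to the sign pattern of $f$. The mechanism is a structured/pseudorandom decomposition $f = f_\str + f_\sml$ with $\|f_\sml\|_2 \leq \sqrt{\varkappa}\,\|f\|_2$ (\cref{Lemma:DecompositionStrSml}), obtained by comparing $\langle (dI-T)|f|,|f|\rangle$ with $\langle (dI+T)f,f\rangle$. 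The Fre\u{\i}man dichotomy is then run not on a combinatorial near-bipartition but on the correlations $\langle f_+,(\tau\cdot f)_+\rangle$ for $\tau\in\calG$: if the dichotomy holds one produces an index-two subgroup and concludes $d\beta$ is bounded below by an absolute constant, hence $\varkappa = \Omega(1/d)$; if it fails one uses the failing $\tau$ together with the edge Cheeger constant to get $\varkappa$ bounded below by an absolute constant. Either way the bound is linear in $1-\mu_2$ with a single $d$ in the denominator. Your combinatorial-partition starting point cannot access this decomposition, and that is why your chain stalls at a quadratic loss.
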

Using a result of Alon \cite{AlonEigenvalueExpand}, we obtain the corollary below. For Cayley graphs, this provides the same bound obtained in \cite{CayleyBottomBipartite}, although with weaker absolute constants. The bound provided by \cite[Corollary 1]{LiLiuNonTrivExtremalEigenvalues} for vertex-transitive graphs is stronger than the following bound.

\begin{corollary}
Let $\Gamma, d, \mu$ be as in \cref{Thm:MainIntro}. Let $\vcc_\Gamma$ denote the vertex Cheeger constant of $\Gamma$. Then the bound 
$$1 + \mu\geq 
\frac {\vcc_\Gamma^2}{350000 d^2}
$$
holds. Moreover, if the underlying group of $\Gamma$ is a finite simple group or it is of odd order, then 
$$1 + \mu\geq \frac {\vcc_\Gamma^2} {17675}$$
holds. 
\end{corollary}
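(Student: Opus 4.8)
The plan is to read the corollary off \cref{Thm:MainIntro} after replacing the quantity $1-\mu_2$ there by a lower bound expressed through the vertex Cheeger constant. Since each of the graphs under consideration is $d$-regular, for every vertex set $S$ each vertex of the vertex boundary of $S$ is an endpoint of at least one edge of the edge boundary of $S$; hence the edge Cheeger constant $\ecc_\Gamma$ is at least $\vcc_\Gamma$. Feeding this into the inequality $1-\mu_2\ge\ecc_\Gamma^2/2$ — the side of the discrete Cheeger inequality one of whose proofs is due to Alon \cite{AlonEigenvalueExpand} — gives a degree-free estimate $1-\mu_2\ge c_0\,\vcc_\Gamma^2$ with an explicit absolute constant $c_0$. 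One may alternatively invoke Alon's magnification-versus-eigenvalue estimate from \cite{AlonEigenvalueExpand} directly; either route produces an inequality of this shape.

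With such an estimate in hand the corollary is immediate. Substituting $1-\mu_2\ge c_0\vcc_\Gamma^2$ into $1+\mu\ge(1-\mu_2)/(50000d)$ and using $d\ge1$ gives $1+\mu\ge c_0\vcc_\Gamma^2/(50000d^2)$, and a conservative choice of the constant yields the stated bound $\vcc_\Gamma^2/(350000d^2)$; the factor $d^2$ rather than $d$ is retained only so that the bound matches the form of \cite[Theorem 2.6]{CayleyBottomBipartite}, the absolute constant being weakened accordingly. Likewise, substituting the same estimate into $1+\mu\ge(1-\mu_2)/2525$ — valid when the underlying group of $\Gamma$ is a finite simple group or of odd order — and rounding the constant gives $1+\mu\ge\vcc_\Gamma^2/17675$.

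There is no genuine obstacle here: the whole analytic content sits in \cref{Thm:MainIntro}, and what remains is the textbook comparison between the edge and vertex Cheeger constants together with elementary bookkeeping of absolute constants. The only points that need a moment's attention are that the version of the Cheeger inequality invoked is the one for the normalized adjacency operator of a regular graph, so that $1-\mu_2$ and not a combinatorial Laplacian eigenvalue appears, and that $\vcc_\Gamma$ is normalized exactly as in \cite{CayleyBottomBipartite} when claiming agreement with their bound.
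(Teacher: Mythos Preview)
Your plan matches the paper's: deduce the corollary from \cref{Thm:MainIntro} together with a lower bound on $1-\mu_2$ in terms of $\vcc_\Gamma$. The problem lies in the comparison you make between the two Cheeger constants. In the paper's normalization, $\ecc_\Gamma$ is the size of the edge boundary divided by the \emph{volume} $d|S|$, whereas $\vcc_\Gamma$ divides the size of the vertex boundary by $|S|$. Your observation that every vertex of the vertex boundary meets at least one boundary edge shows that the number of boundary edges is at least the number of boundary vertices; after dividing by $d|S|$ on one side and by $|S|$ on the other, this yields only $\ecc_\Gamma\ge \vcc_\Gamma/d$, not $\ecc_\Gamma\ge \vcc_\Gamma$. (Indeed the reverse inequality $\ecc_\Gamma\le \vcc_\Gamma$ always holds, since each boundary vertex absorbs at most $d$ boundary edges.) Consequently $1-\mu_2\ge\ecc_\Gamma^2/2$ gives only $1-\mu_2\ge \vcc_\Gamma^2/(2d^2)$, and neither route you describe actually produces a degree-free estimate $1-\mu_2\ge c_0\vcc_\Gamma^2$.

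What the paper invokes is Alon's direct comparison between vertex expansion and the gap of the (unnormalized) Laplacian; for a $d$-regular graph it reads $d(1-\mu_2)\ge \vcc_\Gamma^2/(4+2\vcc_\Gamma^2)$, hence $1-\mu_2\ge \vcc_\Gamma^2/(Cd)$ for an absolute $C$. Substituting into $1+\mu\ge(1-\mu_2)/(50000d)$ gives $1+\mu\ge \vcc_\Gamma^2/(50000C\,d^2)$, which with $C\approx 7$ is the first bound; the extra factor of $d$ you propose to insert by hand in fact comes from Alon's inequality itself. The same substitution into $1+\mu\ge(1-\mu_2)/2525$ again carries a factor of $d$, so the degree-free form of the second bound that you derive rests entirely on the unjustified estimate $1-\mu_2\ge c_0\vcc_\Gamma^2$.
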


The bounds provided by \cref{Thm:MainIntro} are stronger than those obtained in \cite{BiswasCheegerCayley, CheegerCayleySum,CheegerTwisted,VertexTra}. Moreover, the bounds stated in \cref{Thm:MainIntro} cannot be deduced from the bounds obtained in \cite{CayleyBottomBipartite, LowerSpecGap, HuLiuVertexIsoperimetrySignedGraph}, and vice versa. 

\section{Notations}

Let $V$ be a finite set with $|V|\geq 2$. 
Let $\ell^2(V)$ denote the Hilbert space of functions $f: V \to \bbC$ equipped with the inner product 
$$\langle f, g\rangle : = \sum_{v\in V} f(v) \overline {g(v)}$$
and the norm 
$$||f||_{\ell^2(V)} 
: = 
\sqrt{\langle f, f\rangle}.$$
A permutation $\rho: V \to V$ induces an operator $P_\rho:\ell^2(V) \to \ell^2(V)$ defined by 
$$(\rho\cdot f)(v) : = f(\rho^\mo v)
.$$
If $V$ carries a left action of a group $\calG$, then the induced action of $\calG$ on $\ell^2(V)$ is defined by 
$$(\tau\cdot f)(v) : = f(\tau^\mo v).$$

\begin{condition}
\label{SelfAdjConnNonBipartiteRegular}
\quad 
\begin{enumerate}
 \item 
 Let $V$ be a finite set with $|V|\geq 2$.
 \item 
 Let $T: \ell^2(V) \to \ell^2(V)$ be a self-adjoint operator with  $d\mu_2$ denoting its second largest eigenvalue.  
 \item Assume that $\mu_2 \neq 1$. 
\item Suppose 
$a_{uv}:=
\langle 
T1_v, 1_u 
\rangle$ 
is a non-negative integer for any $u, v\in V$, and $T$ has degree $d >0$, i.e., $\langle 
T1_V, 1_u 
\rangle = d$ for any $u\in V$. 
\end{enumerate}
\end{condition}

In the following, $1_V$ denotes the constant function on $V$ taking the value $1$ everywhere. Denote the function $\frac 1{\sqrt n} 1_V $ by $u$ where $n = |V|$. 
For a subset $F$ of $V$, define 
$$
\vol_T (F) 
:= 
\langle T 1_V, 1_{F} \rangle
.$$
The edge Cheeger  constant of $T$, denoted by $\ecc_T$,  is defined by  
$$
\ecc_T 
:=
\min _{F \subseteq V, F \neq \emptyset, V }
\frac
{
\langle T 1_F, 1_{V\setminus F} \rangle
}
{
\min 
\{
\vol_T(F), \vol_T(V\setminus F)
\}
}.
$$
The edge bipartiteness constant of $T$, denoted by $\beta_{\edge, T}$, is defined by 
$$
\beta_{\edge, T} 
:=  \min_{ \psi : V \to \{-1, 0, 1\}, \psi \neq 0} \frac { \langle (dI + T) \psi, \psi \rangle }{2d\| \psi \|_2^2} .$$

\begin{condition}
\label{EigenFunction}
Let $f$ be a real-valued eigenfunction of $T$ with eigenvalue $d\mu$. 
\begin{enumerate}
\item 
The eigenfunction $f$ is orthogonal to $1_V$. 

\item The eigenfunction $f$ is nonzero everywhere on $V$. 
\item The inequality $|\supp(f_+)| \geq |\supp (f_-)|$ holds. 
\item 
Assume that $\mu \neq -1$ and $1 + \mu < 1 - \mu_2$, i.e., $0 <\varkappa < 1$ where $\varkappa:= \frac {1+ \mu}{1- \mu_2}$. 
\end{enumerate}
\end{condition}
Henceforth, we assume that \cref{SelfAdjConnNonBipartiteRegular} and \cref{EigenFunction} hold. 
Note that $\mu\neq 1$, otherwise, $1 + \mu = 2 \geq 1 - \mu_2$, which contradicts $1+ \mu < 1- \mu_2$. Since $d\mu_2$ is the second largest eigenvalue of $T$, it follows that $\mu\leq \mu_2$. 
Note that if $\mu_2\leq 0$, then $\mu\leq \mu_2$ implies that $\mu \leq 0$. If $\mu_2 >0$, then $1+ \mu < 1- \mu_2$ implies that $\mu<0$. 

\begin{theorem}
\label{Thm:DualCheegerNewUpp}
The inequalities
$$
\beta_{\edge, T}
\leq \frac { 1+ \mu} 2 \frac 1{1 - \varkappa}, 
\quad
\varkappa 
\geq \frac{\beta_{\edge, T}}2
$$
hold. 
\end{theorem}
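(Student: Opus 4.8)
The plan is to manufacture a $\{-1,0,1\}$-valued test function for $\beta_{\edge,T}$ directly out of the eigenfunction $f$, following the Desai--Rao/Trevisan proof of the hard direction of the dual Cheeger inequality, but feeding the spectral gap $1-\mu_2$ into that argument through a ``flatness'' estimate for $|f|$. The two displayed inequalities should come out as two facets of this one construction; the second is the form that matters downstream, since it rearranges to $1+\mu\ge\tfrac12\beta_{\edge,T}(1-\mu_2)$, a lower bound on the lower spectral gap.

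First I would establish the flatness of $|f|$. Put $g:=|f|$ and $u:=\tfrac1{\sqrt n}1_V$. Since $(|f(v_1)|-|f(v_2)|)^2\le(f(v_1)+f(v_2))^2$ for all $v_1,v_2$ (check the two sign cases), one gets $\langle(dI-T)g,g\rangle=\tfrac12\sum_{v_1,v_2}a_{v_1v_2}(g(v_1)-g(v_2))^2\le\tfrac12\sum_{v_1,v_2}a_{v_1v_2}(f(v_1)+f(v_2))^2=\langle(dI+T)f,f\rangle=d(1+\mu)\|f\|_2^2=d(1+\mu)\|g\|_2^2$. Writing $g=c_0u+g_0$ with $g_0\perp1_V$ and $c_0=\langle g,u\rangle=\|f\|_1/\sqrt n>0$, and using that $(dI-T)$ kills $1_V$ while $g_0\perp1_V$ forces $\langle(dI-T)g_0,g_0\rangle\ge d(1-\mu_2)\|g_0\|_2^2$, I would conclude $(1-\mu_2)\|g_0\|_2^2\le(1+\mu)(\|g_0\|_2^2+c_0^2)$, i.e. $\|g_0\|_2^2\le\tfrac{\varkappa}{1-\varkappa}\,c_0^2$, equivalently $\|f\|_1^2\ge n(1-\varkappa)\|f\|_2^2$. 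This is the step that uses $\varkappa<1$ and $\mu_2$, and it says $|f|$ is close to its mean once $\varkappa$ is small.

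Next, normalizing so that $\|f\|_2^2=n$, the flatness estimate becomes $\sum_v(|f(v)|-1)^2=\|f-\operatorname{sign}(f)\|_2^2=O(\varkappa n)$, and the balance hypothesis $|\supp f_+|\ge|\supp f_-|$ together with $\langle f,1_V\rangle=0$ forces the imbalance $\bigl||\supp f_+|-|\supp f_-|\bigr|$ to be $O(\sqrt\varkappa\,n)$. I would then run the threshold rounding $\psi_t:=\operatorname{sign}(f)\cdot\mathbbm{1}[\,|f|>t\,]$, averaging $t$ against a suitable law. Expanding $\langle(dI+T)\psi_t,\psi_t\rangle=\tfrac12\sum a_{v_1v_2}(\psi_t(v_1)+\psi_t(v_2))^2$ and taking expectations, pairs on which $f$ has constant sign are paid for directly by $\langle(dI+T)f,f\rangle=d(1+\mu)\|f\|_2^2$, while pairs on which $f$ changes sign but which straddle the threshold are the classical source of loss; the flatness estimate, the eigen-equation $Tf=d\mu f$, and the imbalance bound are what should let one control that loss well enough that some $\psi=\psi_t$ satisfies $\langle(dI+T)\psi,\psi\rangle\le\tfrac{1+\mu}{1-\varkappa}\,d\|\psi\|_2^2$, which is the first inequality. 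The second inequality should then follow either from the same analysis or, more cheaply, by taking $\psi=\operatorname{sign}(f)$ outright (flatness already controls $\langle(dI+T)\psi,\psi\rangle$ via the decomposition $f=c_0'\operatorname{sign}(f)+\bigl(f-c_0'\operatorname{sign}(f)\bigr)$ with error term of size $O(\varkappa n)$, where $c_0'=\|f\|_1/n$) and invoking $1+\mu=\varkappa(1-\mu_2)\le2\varkappa$.

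The hard part will be this rounding step: showing that the flatness of $|f|$ genuinely upgrades the square-root loss of the classical sweep to the linear-in-$(1+\mu)$ bound encoded by the factor $\tfrac1{1-\varkappa}$, and pinning down the absolute constant there. Everything else — the flatness estimate, the elementary sign inequalities, and the bookkeeping with $\|f\|_1$, $\|f\|_2$, $\|f\|_\infty$ — I expect to be routine.
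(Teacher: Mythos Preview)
Your flatness estimate is exactly the content of the paper's decomposition lemma: writing $f = f_\str + f_\sml$ with $f_\str := (\|f\|_1/n)\,\operatorname{sign}(f)$, one gets $\|f_\sml\|_2^2 \le \varkappa\|f\|_2^2$ via precisely the inequality $\langle (dI-T)|f|,|f|\rangle \le \langle(dI+T)f,f\rangle$ that you write down. So that step is correct and matches the paper.

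Where you diverge is in the rounding, and there you make your life harder than necessary. The paper never sweeps a threshold. It takes your ``cheap'' test function $\psi = 1_{f>0} - 1_{f<0}$, which is $\{-1,1\}$-valued everywhere because $f$ is nowhere vanishing by the standing hypothesis on the eigenfunction, and uses it for \emph{both} inequalities, not just the second. The point is that $f_\str$ is a scalar multiple of $\psi$, so
\[
\sum_{u,v}a_{uv}\bigl(f_\str(u)+f_\str(v)\bigr)^2 \;=\; 4d\beta_f\,\|f_\str\|_2^2
\]
\emph{exactly}, where $\beta_f := \tfrac{1}{dn}\bigl(\langle T1_{f>0},1_{f>0}\rangle + \langle T1_{f<0},1_{f<0}\rangle\bigr) \ge \beta_{\edge,T}$. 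Now expand $2d(1+\mu)\|f\|_2^2 = \sum_{u,v} a_{uv}(f(u)+f(v))^2$ using $f = f_\str + f_\sml$: the cross terms collapse via $Tf = d\mu f$ and $\langle f_\str,f_\sml\rangle = 0$, and after the crude estimate $\langle Tf_\sml,f_\sml\rangle \le d\|f_\sml\|_2^2$ one reads off
\[
2\beta_f \;\le\; (1+\mu) + (1-\mu)\,\frac{\|f_\sml\|_2^2}{\|f_\str\|_2^2} \;\le\; (1+\mu) + (1-\mu)\,\frac{\varkappa}{1-\varkappa},
\]
which the paper then simplifies into both displayed inequalities. No sweep, no averaging over $t$, no ``hard part''.

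In short: your plan is not wrong, but the step you flag as hard --- upgrading the square-root loss of the Desai--Rao/Trevisan sweep to a linear bound via flatness --- is entirely bypassed by this direct algebraic expansion with the single test function $\psi = \operatorname{sign}(f)$. Since $|f|$ is already close to constant (that is your flatness estimate), thresholding $|f|$ at a positive level buys nothing over thresholding at zero.
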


By the dual Cheeger inequality, it follows that $\beta_{\edge, T} \geq \frac{1 + \mu}2$. The above result provides an upper bound on $\beta_{\edge, T}$ in terms of $\varkappa$ and the gap $1+ \mu$. It follows from \cref{Lemma:LowerBddForKappaGeneral}.

\begin{theorem}
\label{Thm:GenFirst}
Suppose \cref{SelfAdjConnNonBipartiteRegular},  \cref{EigenFunction}, \cref{Condition22}, and \cref{Left2Right} hold. Also assume that \cref{ConditionConn} holds with $\nu \geq \frac 12$. Then the inequality 
$$
1+ \mu 
\geq 
\frac {1- \mu_2}{50000 d}
$$
holds. 
\end{theorem}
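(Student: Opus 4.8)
The plan is to chain together the two inequalities already in hand: the spectral estimate of \cref{Thm:DualCheegerNewUpp}, which bounds the edge bipartiteness constant $\beta_{\edge, T}$ from above in terms of the lower spectral gap $1+\mu$ and the ratio $\varkappa$, and a combinatorial estimate—provided by the yet-unstated \cref{Condition22}, \cref{Left2Right}, and \cref{ConditionConn} with $\nu \ge \frac 12$—that bounds $\beta_{\edge, T}$ from \emph{below} by an absolute constant times $\frac 1d$ in the regime where $\varkappa$ is bounded away from $1$. Combining these two forces $1+\mu \ge \frac{1-\mu_2}{50000 d}$.

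In more detail: assume for contradiction that $1+\mu < \frac{1-\mu_2}{50000 d}$, i.e. $\varkappa < \frac 1{50000 d} \le \frac 1{50000}$, so in particular $1-\varkappa \ge \frac 12$ (indeed $1-\varkappa$ is very close to $1$). Then \cref{Thm:DualCheegerNewUpp} gives
$$
\beta_{\edge, T} \leq \frac{1+\mu}{2}\cdot\frac 1{1-\varkappa} \leq (1+\mu) = \varkappa(1-\mu_2) \leq \varkappa \leq \frac 1{50000 d}.
$$
So it suffices to prove a lower bound of the form $\beta_{\edge, T} > \frac{c}{d}$ for a constant $c \ge \frac 1{50000}$, valid under the structural hypotheses, to reach a contradiction. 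This is where Fre\u{\i}man's pivoting technique enters: one considers a minimizer $\psi: V \to \{-1,0,1\}$ of the bipartiteness ratio, uses the fact that $\varkappa$ (equivalently, the defect from bipartiteness captured by $\beta_{\edge, T}$) is small to conclude that the support of $\psi$, its positive part, and its negative part behave like an \emph{almost} two-coloring of $\Gamma$, and then leverages the group structure (via \cref{Condition22} and \cref{Left2Right}, which presumably encode the multiplication/translation structure of Cayley-type graphs) to produce an index-two subgroup whose cosets would genuinely bipartition $\Gamma$, contradicting non-bipartiteness—unless $\beta_{\edge, T}$ was bounded below to begin with. The connectivity hypothesis \cref{ConditionConn} with $\nu \ge \frac 12$ is what guarantees the relevant generating/covering set is large enough for the pivoting argument to close up; the value $\nu \ge \frac 12$ is the threshold at which the better (coset-producing) branch of the dichotomy is available rather than the weaker one.

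The main obstacle is the combinatorial core: carefully tracking the counting argument that converts "the bipartiteness ratio is $< c/d$" into "there is an index-two subgroup realizing the bipartition", while keeping the absolute constant explicit. This requires the refinements of the arguments of \cite{BGGTExpansionSimpleLie, CayleyBottomBipartite, VertexTra} alluded to in the introduction: one must handle the error terms coming from the $0$-valued vertices of $\psi$, control how edges are distributed between $\supp(f_+)$ and $\supp(f_-)$ using \cref{EigenFunction}(3), and verify that the near-bipartition is consistent across all generators simultaneously. I expect the bulk of the work to be in establishing the auxiliary lemmas behind \cref{Condition22} and \cref{Left2Right} and in the bookkeeping that yields the constant $50000$; the final deduction from those lemmas together with \cref{Thm:DualCheegerNewUpp} is the short contradiction argument sketched above.
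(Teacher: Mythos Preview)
Your high-level template---bound $\beta_{\edge,T}$ from above via \cref{Thm:DualCheegerNewUpp} and from below via a Fre\u{\i}man-type argument---is in the right spirit, but the execution diverges from the paper in a way that leaves a real gap.

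The paper never works with an abstract minimizer $\psi:V\to\{-1,0,1\}$ of the bipartiteness ratio. It works throughout with the eigenfunction $f$ itself, via the decomposition $f=f_\str+f_\sml$ of \cref{Lemma:DecompositionStrSml}; the control $\|f_\sml\|_2\le\sqrt\varkappa\,\|f\|_2$ is what makes all the subsequent counting go through, and it depends on $f$ being an eigenfunction. The Fre\u{\i}man pivot is run not on $\psi$ but on the correlations $\langle f_+,(\tau\cdot f)_+\rangle$ as $\tau$ ranges over $\calG$. Crucially, the argument then bifurcates into two genuinely different cases (this is the ``dichotomy'' in the paper's outline):
\begin{itemize}
\item If for every $\tau$ the correlation avoids the middle interval $(\delta\|f_+\|_2^2,(1-\delta)\|f_+\|_2^2)$, then $H=\{\tau:\langle f_+,(\tau\cdot f)_+\rangle\ge(1-\delta)\|f_+\|_2^2\}$ is shown to be an index-two subgroup (\cref{Lemma:SubgrpOfIndex2}); \cref{ConditionConn} with $\nu\ge\tfrac12$ then supplies a generator that largely preserves an $H$-orbit, forcing $\beta_f\ge c/d$ and hence $\varkappa\ge c'/d$ via \cref{Lemma:LowerBddForKappaGeneral} (\cref{Lemma:DichotomyYieldsLowerBdd}).
\item If some $\tau$ lands in the middle interval, the Fre\u{\i}man construction fails and the paper uses a completely separate argument (\cref{Lemma:LackOfDichotomyGivesLowerSpecGap}): the intermediate value of $|\supp(f_+)\cap\tau\supp(f_+)|$ is fed into the Cheeger inequality (via \cref{Left2Right}) to bound $\beta_f$ below by a multiple of $\ecc_T\ge\tfrac12(1-\mu_2)$, yielding $\varkappa\ge c$ independent of $d$.
\end{itemize}
Your sketch collapses these into a single ``produce an index-two subgroup, contradict non-bipartiteness'' step, and so misses the second case entirely; that case is where \cref{Left2Right} is actually used, and where the degree-free bound comes from. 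Also, \cref{Condition22}, \cref{Left2Right}, \cref{ConditionConn} are hypotheses of the theorem, not lemmas you need to establish---the work is in the two propositions above, not in proving those conditions.
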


Define 
$$
\varsigma 
= \frac{\sqrt{\varkappa}}{\sqrt{1- \varkappa}}, \quad 
\vartheta
= 
\frac 1{\sqrt 2}\sqrt{ 1 - \varkappa}
- 
\sqrt{\varkappa}, \quad 
\delta = 
\frac \xi2 
\left(
\frac {3\left(1 - \varkappa\right)} {1  + \varsigma + \varkappa + 2 \sqrt 2 \sqrt{\varkappa}}
\left(
\frac 1{\sqrt 2}
-
\varsigma 
\right) ^2 
-1
\right)
$$
where $\xi$ is an element of $[0, 1]$ to be specified later. 

\begin{proof}
[Proof of \cref{Thm:MainIntro}]

Note that 
\cref{Lemma:NowhereVanishingEigenfunctions} shows that \cref{EigenFunction} holds for Cayley graphs, Cayley graphs, twisted Cayley graphs, twisted Cayley sum graphs and vertex-transitive graphs. 
These graphs also satisfy \cref{SelfAdjConnNonBipartiteRegular} whenever they are undirected, connected and non-bipartite. 
\cref{Condition22}, and \cref{Left2Right} hold for these graphs. Moreover, \cref{ConditionConn} holds for them with $\nu \geq \frac 12$. This proves the first bound. The second bound follows from 
\cref{Lemma:SubgrpOfIndex2} and 
\cref{Lemma:LackOfDichotomyGivesLowerSpecGap}. 
\end{proof}

We provide an outline of the proof of \cref{Thm:GenFirst}. In \cref{Lemma:DecompositionStrSml}, we show that an eigenfunction of $T$, with eigenvalue ``close to $-1$'', admits an orthogonal decomposition into a ``structured'' component with ``large'' norm and a ``pseudorandom'' component with ``small'' norm. Next, in \cref{Lemma:fsmlL1bdd}, we obtain bounds on the $\ell^1$-norm of $f$ restricted to certain subsets. 
In  \cref{Lemma:DiffOfL2NormOFf+f-}, \cref{Lemma:ControllingL2NormFromBelow}, we bound the $\ell^2$-norm of $f_+$ in terms of the $\ell^2$-norm of $f$ and $\varkappa$. Assuming the dichotomy that for any $\tau \in \calG$, either $f_+, (\tau \cdot f)_+$  correlate substantially, or their correlation is negligible, we prove the existence of an index two subgroup of $\calG$ in \cref{Lemma:SubgrpOfIndex2} using an argument due to Fre\u{\i}man \cite{FreimanGroupsInverseProb}. 
Under this dichotomy, we show in \cref{Lemma:DichotomyYieldsLowerBdd} that 
$$
1+ \mu 
\geq \frac {1- \mu_2} {50000 d}
$$
holds. 
If such a dichotomy does not hold, then we show in 
\cref{Lemma:LackOfDichotomyGivesLowerSpecGap}
that 
$$1+ \mu 
\geq \frac {1-\mu_2}{55000} .$$
Combining these bounds, \cref{Thm:GenFirst} follows.

\section{A lower bound on $\varkappa$}

\begin{lemma}
[Decomposition]
\label{Lemma:DecompositionStrSml}
Let $f_\str, f_\sml$ be the functions on $V$ defined by 
\begin{align*}
f_\str & : =  \langle f , \frac 1{\sqrt n} (1_{f> 0} - 1_{f< 0} ) \rangle \frac 1{\sqrt n} ( 1_{f> 0} - 1_{f< 0} ), \\
f_\sml &:= f - f_\str.
\end{align*}
The inequalities 
\begin{align*}
\| f_\sml \|_2 & \leq \sqrt\varkappa \|f\|_2, \\
\| f_\str \|_2 & \geq \sqrt{1 - \varkappa }\|f\|_2, \\
\| f_\sml \|_2 & \leq \varsigma \| f_\str \|_2
\end{align*}
hold. 
\end{lemma}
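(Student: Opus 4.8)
The plan is to work with the unit vector $w := \frac{1}{\sqrt n}(1_{f>0} - 1_{f<0})$, so that $f_\str = \langle f, w\rangle w$ is exactly the orthogonal projection of $f$ onto the line spanned by $w$, and $f_\sml = f - f_\str$ is the component orthogonal to $w$. Since $f$ is nowhere zero on $V$ (by \cref{EigenFunction}(2)), we have $1_{f>0} - 1_{f<0} = \mathrm{sgn}(f)$ pointwise, hence $\|w\|_2^2 = \frac{1}{n}\cdot n = 1$ and $\langle f, w\rangle = \frac{1}{\sqrt n}\|f\|_1$. The first and third inequalities are purely formal consequences of the Pythagorean identity $\|f\|_2^2 = \|f_\str\|_2^2 + \|f_\sml\|_2^2$ together with the definition $\varsigma = \sqrt{\varkappa}/\sqrt{1-\varkappa}$: if one establishes $\|f_\sml\|_2^2 \le \varkappa\|f\|_2^2$, then $\|f_\str\|_2^2 = \|f\|_2^2 - \|f_\sml\|_2^2 \ge (1-\varkappa)\|f\|_2^2$, and dividing the two bounds gives $\|f_\sml\|_2^2/\|f_\str\|_2^2 \le \varkappa/(1-\varkappa) = \varsigma^2$. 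So the entire content of the lemma is the single estimate $\|f_\sml\|_2 \le \sqrt{\varkappa}\,\|f\|_2$.

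To prove that estimate, I would use the variational characterization of $1-\mu_2$. Because $\mu_2 = $ (second largest eigenvalue of $T$)$/d$ and $f\perp 1_V$ (by \cref{EigenFunction}(1)), every vector orthogonal to $1_V$ has Rayleigh quotient $\langle Tg, g\rangle \le d\mu_2 \|g\|_2^2$; equivalently $\langle (dI - T)g, g\rangle \ge d(1-\mu_2)\|g\|_2^2$ for all $g \perp 1_V$. Now I want to apply this to $g = f_\sml$. The key point is that $f_\sml$ is still orthogonal to $1_V$: indeed $f \perp 1_V$ by hypothesis, and $f_\str$ is a multiple of $\mathrm{sgn}(f)$, whose inner product with $1_V$ is $|\supp(f_+)| - |\supp(f_-)|$ — this need not vanish. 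Hmm, so $f_\sml$ need not be orthogonal to $1_V$ directly. The cleaner route is to instead combine the eigenvalue equation $Tf = d\mu f$ with the projection structure: compute $\langle (dI+T) f_\sml, f_\sml\rangle$ and relate it to $\langle (dI+T)f, f\rangle = d(1+\mu)\|f\|_2^2$. Since $f_\str$ is supported "sign-wise" the cross term $\langle (dI+T)f, f_\str\rangle$ or $\langle T f_\str, \mathrm{sgn}(f)\rangle$ can be bounded using $\langle T1_v,1_u\rangle \ge 0$ and $\langle T 1_V, 1_u\rangle = d$ from \cref{SelfAdjConnNonBipartiteRegular}, which forces $\langle (dI+T)f_\str, f_\str\rangle \ge 0$ and more. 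The right way: write $f = f_\str + f_\sml$ with the two pieces orthogonal, expand
$$d(1+\mu)\|f\|_2^2 = \langle (dI+T)f, f\rangle = \langle (dI+T)f_\str, f_\str\rangle + 2\langle (dI+T)f_\str, f_\sml\rangle + \langle (dI+T)f_\sml, f_\sml\rangle,$$
observe $\langle (dI+T)f_\str, f_\str\rangle \ge 0$ and $\langle (dI+T)f_\sml, f_\sml\rangle \ge 0$ (both $dI\pm T$ are positive semidefinite since eigenvalues of $T/d$ lie in $[-1,1]$), and show the cross term vanishes or has a sign. In fact $f_\sml = f - \langle f, w\rangle w$ is orthogonal to $w$, and $\langle T f_\sml, w\rangle$ can be analyzed via $\langle Tf, w\rangle = d\mu\langle f, w\rangle$; working this through gives $\langle(dI+T)f_\str,f_\sml\rangle = 0$ is not automatic, so instead I bound $\langle(dI-T)f_\sml, f_\sml\rangle$ from below by $d(1-\mu_2)\|f_\sml\|_2^2$ after checking that the component of $f_\sml$ along $1_V$ contributes nonnegatively to $\langle(dI-T)\cdot,\cdot\rangle$ (as $(dI-T)1_V = 0$, the $1_V$-component is actually in the kernel, so it contributes zero and the inequality $\langle(dI-T)f_\sml,f_\sml\rangle \ge d(1-\mu_2)\|P f_\sml\|_2^2$ holds where $P$ projects off $1_V$ — and one notes $\|Pf_\sml\|_2 = \|f_\sml\|_2$ up to the harmless $1_V$-component whose norm can be absorbed).

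The cleanest packaging, which I expect the author uses: from $d(1+\mu)\|f\|_2^2 \ge \langle(dI+T)f_\sml, f_\sml\rangle$ (dropping the nonnegative $f_\str$ term and showing the cross term is $\ge 0$ by a sign argument using nonnegativity of the matrix entries of $T$ on the sign pattern of $f$), and from $\langle(dI+T)f_\sml,f_\sml\rangle = 2d\|f_\sml\|_2^2 - \langle(dI-T)f_\sml,f_\sml\rangle \ge 2d\|f_\sml\|_2^2 - d(1+\mu_2)\cdots$ — more directly, since $\|f_\sml\|_2^2 = \|f\|_2^2 - \|f_\str\|_2^2$ and $f_\str$ correlates perfectly with the sign pattern, one gets $\langle (dI-T)f_\sml, f_\sml\rangle \ge d(1-\mu_2)\|f_\sml\|_2^2$, hence $\langle(dI+T)f_\sml,f_\sml\rangle \le 2d\|f_\sml\|_2^2 - d(1-\mu_2)\|f_\sml\|_2^2$. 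Chaining: $d(1+\mu)\|f\|_2^2 \ge \langle(dI+T)f_\sml,f_\sml\rangle$, but we also need a lower bound on the left or the full expansion. The correct inequality to extract is
$$\|f_\sml\|_2^2 \le \frac{1+\mu}{1-\mu_2}\,\|f\|_2^2 = \varkappa\,\|f\|_2^2,$$
obtained by comparing $\langle(dI+T)f,f\rangle = d(1+\mu)\|f\|_2^2$ with $\langle(dI-T)f_\sml, f_\sml\rangle \ge d(1-\mu_2)\|f_\sml\|_2^2$ and the trivial bound $\langle(dI+T)f,f\rangle \ge \langle(dI+T)f_\sml,f_\sml\rangle + 2\langle(dI+T)f_\str,f_\sml\rangle = \langle(dI+T)f_\sml,f_\sml\rangle$ once the cross term is shown nonnegative. \textbf{The main obstacle} is precisely controlling the sign of the cross term $\langle(dI+T)f_\str, f_\sml\rangle$: this is where one must exploit that $f_\str$ is a scalar multiple of the sign vector $1_{f>0}-1_{f<0}$ and that $T$ has nonnegative entries with constant row sums $d$, so that $(dI+T)$ acting on the sign vector has a controllable inner product against the residual $f_\sml$. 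Once that sign is pinned down, the rest is the bookkeeping described above.
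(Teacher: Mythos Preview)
Your reduction is correct: once $\|f_\sml\|_2^2 \le \varkappa\|f\|_2^2$ is established, the other two inequalities follow from Pythagoras and the definition of $\varsigma$. But your proof of that single estimate has a real gap, and the paper's argument resolves it by a trick you did not find.

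You try to apply the Rayleigh bound $\langle (dI-T)g,g\rangle \ge d(1-\mu_2)\|g\|_2^2$ to $g = f_\sml$, then correctly observe that $f_\sml$ need \emph{not} be orthogonal to $1_V$ (its inner product with $1_V$ is a nonzero multiple of $|\supp(f_+)|-|\supp(f_-)|$). Your proposed fix, to project off $1_V$ and use $(dI-T)1_V=0$, only yields $\langle (dI-T)f_\sml,f_\sml\rangle \ge d(1-\mu_2)\|Pf_\sml\|_2^2$ with $\|Pf_\sml\|_2 \le \|f_\sml\|_2$ --- the inequality goes the wrong way. Your alternative, expanding $\langle (dI+T)f,f\rangle$ and trying to control the cross term $\langle (dI+T)f_\str,f_\sml\rangle$, also stalls: a short computation gives that this cross term equals $c^2\bigl(d\mu - \tfrac1n\sum_{u,v}a_{uv}\,\mathrm{sgn}\,f(u)\,\mathrm{sgn}\,f(v)\bigr)$ with $c=\langle f,w\rangle$, and there is no evident sign for this quantity.

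The paper sidesteps both difficulties by passing to $|f|$. Set $|f|_\str := \langle |f|,u\rangle u$ and $|f|_\sml := |f|-|f|_\str$, where $u = n^{-1/2}1_V$. Now $|f|_\sml \perp 1_V$ by construction, so the $\mu_2$-Rayleigh bound applies cleanly; moreover $(dI-T)|f|_\str = 0$ because $|f|_\str$ is a multiple of $1_V$, so all cross terms in $\langle (dI-T)|f|_\sml,|f|_\sml\rangle$ vanish \emph{exactly}, leaving $\langle(dI-T)|f|,|f|\rangle$. The nonnegativity of the entries $a_{uv}$ then gives $\langle(dI-T)|f|,|f|\rangle \le \langle(dI+T)f,f\rangle = d(1+\mu)\|f\|_2^2$, since $-|f(u)||f(v)|\le f(u)f(v)$. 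This yields $\||f|_\sml\|_2^2 \le \varkappa\|f\|_2^2$. Finally one checks $f_\str = \tfrac{f}{|f|}\,|f|_\str$ and $f_\sml = \tfrac{f}{|f|}\,|f|_\sml$ pointwise, so $\|f_\sml\|_2 = \||f|_\sml\|_2$ and $\|f_\str\|_2 = \||f|_\str\|_2$, and the desired bounds follow. The move to $|f|$ is the missing idea in your proposal.
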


\begin{proof}
Define 
\begin{align*}
|f|_\str & := \langle |f| , u\rangle u, \\
|f|_\sml & : = |f| - |f|_\str. 
\end{align*}
We obtain bounds for the $\ell^2$-norms of $|f|_\str, |f|_\sml$ following \cite[Lemma 3]{LiLiuNonTrivExtremalEigenvalues}. 
Since $|f|_\sml$ is orthogonal to $u$, 
it follows that 
$$
\langle T |f|_\sml , |f|_\sml \rangle 
\leq d\mu_2 \||f|_\sml\|_2^2,$$
which yields 
\begin{align*}
d (1 - \mu_2)  \||f|_\sml\|_2^2
& \leq d\||f|_\sml\|_2^2 - \langle T |f|_\sml , |f|_\sml \rangle \\
& =  \langle (dI -T) |f|_\sml , |f|_\sml \rangle \\
& =  \langle (dI -T) (|f| -  |f|_\str) , |f|_\sml \rangle \\
& =  \langle (dI -T) |f| , |f|_\sml \rangle \\
& =  \langle (dI -T) |f| , |f| - |f|_\str \rangle \\
& =  \langle (dI -T) |f| , |f| \rangle  - \langle (dI -T) |f| , |f|_\str \rangle\\
& =  \langle (dI -T) |f| , |f| \rangle  - \langle  |f| , (dI -T)|f|_\str \rangle 
\\
& =  \langle (dI -T) |f| , |f| \rangle   \\
& = d\sum_u f(u)^2 - \sum_{u, v\in V \times V} a_{uv} |f(u) || f(v) |\\
& \leq d\sum_u f(u)^2 + \sum_{u, v\in V \times V} a_{uv} f(u) f(v) \\
& =  \langle (dI +T)f, f\rangle   \\
& =  \langle (d+d\mu) f , f\rangle   \\
& = d(1+\mu) \|f\|_2^2 .
\end{align*}
This shows that 
\begin{align*}
\| |f|_\sml \|_2^2 & \leq \varkappa \|f\|_2^2, \\
\| |f|_\str \|_2^2 & \geq \left( 1 - \varkappa\right)\|f\|_2^2.
\end{align*}
Note that 
\begin{align*}
f_\str &  =  \langle f , \frac 1{\sqrt n} (1_{f> 0} - 1_{f< 0} ) \rangle \frac 1{\sqrt n} ( 1_{f> 0} - 1_{f< 0} ) \\
& =  \frac 1 { n}  \langle f, 1_{f> 0 } - 1_{f< 0}\rangle (1_{f> 0 } - 1_{f< 0}) \\
& =  \frac 1 { n}  \langle |f|,  1_V \rangle (1_{f> 0 } - 1_{f< 0}), \\
|f|_\str 
& =  \frac 1 { n}  \langle |f|,  1_V \rangle 1_V \\
& = \frac 1 { n}  \langle |f|,  1_V \rangle (1_{f> 0 } + 1_{f< 0}) .
\end{align*}
It follows that 
$$f_\str = \frac f{|f|} |f|_\str.$$
We also have 
$$
f_\sml = f - f_\str = f - \frac f{|f|} |f|_\str = \frac f{|f|} |f| - \frac f{|f|} |f|_\str = \frac f{|f|} |f|_\sml.
$$
The bounds follow. 
\end{proof}

The \emph{bipartiteness constant} of $f$ is defined by 
$$
\beta: = \beta_f 
:= 
\frac {\langle T1_{f>0} , 1_{f>0} \rangle + \langle T1_{f<0} , 1_{f<0} \rangle }{dn}
.
$$
The following provides a lower bound on $\varkappa = \frac{1+ \mu}{1-\mu_2}$ in terms of the bipartiteness constant $\beta$ of $f$. 
By the dual Cheeger inequality, it follows that $2\beta \geq 1 + \mu$. We prove that $2\beta \leq \frac{1 + \mu} {1 - \varkappa}$ if $\varkappa < 1$. Noting that $\beta \geq \beta_{\edge, T}$, \cref{Thm:DualCheegerNewUpp} follows from \cref{Lemma:LowerBddForKappaGeneral} below.

\begin{theorem}
\label{Lemma:LowerBddForKappaGeneral}
The inequality 
$$
\beta
\leq 
\frac{1+ \mu  + (1- \mu) \varsigma^2 }2
=\frac{1-\mu_2} 2 \frac {\varkappa}{1 - \varkappa}
= \frac { 1+ \mu} 2 \frac 1{1 - \varkappa}
\leq \frac { \varkappa}{1 - \varkappa}
$$
holds. Consequently, 
$$\varkappa
\geq \frac{2\beta}{1-\mu_2+2\beta }
\geq \frac{\beta}{1+ \beta }
\geq \frac \beta 2
$$
holds. In particular, if $2d\beta \geq 1$, then 
$$\varkappa\geq \frac 1{d(1 - \mu_2) + 1} > \frac 1{2d+1}.$$
\end{theorem}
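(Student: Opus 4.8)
The plan is to read the bipartiteness constant $\beta=\beta_f$ as (half of) a Rayleigh quotient of the positive semidefinite operator $dI+T$ and to compare it with the eigenfunction $f$ via the orthogonal splitting $f=f_\str+f_\sml$ of \cref{Lemma:DecompositionStrSml}. Since $f$ vanishes nowhere, $1_V=1_{f>0}+1_{f<0}$, so the sign pattern $g:=1_{f>0}-1_{f<0}$ has $\|g\|_2^2=n$; expanding $\langle T1_V,1_V\rangle=dn$ gives $\langle T1_{f>0},1_{f<0}\rangle=\tfrac{dn}{2}(1-\beta)$, hence $\langle(dI+T)g,g\rangle=2dn\beta$. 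By the computations in the proof of \cref{Lemma:DecompositionStrSml}, $f_\str=c\,g$ with $c=\tfrac1n\langle|f|,1_V\rangle>0$ and $\|f_\str\|_2^2=c^2n$, so this becomes the clean identity $\langle(dI+T)f_\str,f_\str\rangle=2d\beta\,\|f_\str\|_2^2$.

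Writing $f_\str=f-f_\sml$, invoking $(dI+T)f=d(1+\mu)f$, and using $\langle f,f_\sml\rangle=\|f_\sml\|_2^2$ (which holds as $f_\str\perp f_\sml$) together with $\|f\|_2^2=\|f_\str\|_2^2+\|f_\sml\|_2^2$, the identity above rearranges to
\[
2d\beta\,\|f_\str\|_2^2=d(1+\mu)\|f_\str\|_2^2-d(1+\mu)\|f_\sml\|_2^2+\langle(dI+T)f_\sml,f_\sml\rangle ,
\]
so everything reduces to an upper bound on the pseudorandom term $\langle(dI+T)f_\sml,f_\sml\rangle$. Since $Tu=du$, splitting $f_\sml=\langle f_\sml,u\rangle u+f_\sml^{\perp}$ with $f_\sml^{\perp}\perp u$ kills the cross terms, and $\langle T\phi,\phi\rangle\le d\mu_2\|\phi\|_2^2$ for $\phi\perp u$ gives $\langle(dI+T)f_\sml,f_\sml\rangle\le d(1+\mu_2)\|f_\sml\|_2^2+d(1-\mu_2)\langle f_\sml,u\rangle^2$. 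The decisive point is that $f_\sml$ carries only a small component along $u$: since $f\perp1_V$ forces $\sum_{f>0}f=\sum_{f<0}|f|$ while $|f|_\sml=|f|-c\,1_V$ has mean zero, the imbalance between $|\supp(f_+)|$ and $|\supp(f_-)|$ — equivalently the size of $\langle f_\sml,u\rangle$ — is controlled by $\|f_\sml\|_2$, and the hypothesis $|\supp(f_+)|\ge|\supp(f_-)|$ fixes its sign. Feeding this in and then using $\|f_\sml\|_2^2\le\varkappa\|f\|_2^2$, $\|f_\str\|_2^2\ge(1-\varkappa)\|f\|_2^2$, $\|f_\sml\|_2\le\varsigma\|f_\str\|_2$ from \cref{Lemma:DecompositionStrSml}, I expect to reach $2d\beta\|f_\str\|_2^2\le d(1+\mu)\|f\|_2^2$, i.e.\ $\beta\le\tfrac{1+\mu}{2}\cdot\tfrac{\|f\|_2^2}{\|f_\str\|_2^2}\le\tfrac{1+\mu}{2(1-\varkappa)}$.

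The displayed equalities are then just the rewrites $\varkappa(1-\mu_2)=1+\mu$ and $1+\varsigma^2=\tfrac1{1-\varkappa}$, and $\tfrac{1+\mu}{2(1-\varkappa)}\le\tfrac\varkappa{1-\varkappa}$ is $\mu_2\ge-1$. For the consequences, $\beta\le\tfrac{(1-\mu_2)\varkappa}{2(1-\varkappa)}$ rearranges to $\varkappa\ge\tfrac{2\beta}{1-\mu_2+2\beta}$; then $\tfrac{2\beta}{1-\mu_2+2\beta}\ge\tfrac{\beta}{1+\beta}$ again uses $\mu_2\ge-1$, while $\tfrac{\beta}{1+\beta}\ge\tfrac\beta2$ uses $\beta\le1$ (valid because $\langle T1_S,1_S\rangle\le d|S|$ for every $S\subseteq V$, so $\beta\le\tfrac{d|\supp f_+|+d|\supp f_-|}{dn}=1$), and if $2d\beta\ge1$ the monotonicity of $t\mapsto\tfrac{t}{1-\mu_2+t}$ gives $\varkappa\ge\tfrac1{d(1-\mu_2)+1}>\tfrac1{2d+1}$ since $\mu_2>-1$.

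The hard part is the estimate on $\langle(dI+T)f_\sml,f_\sml\rangle$: the trivial bound $\langle(dI+T)f_\sml,f_\sml\rangle\le 2d\|f_\sml\|_2^2$ coming from $0\preceq dI+T\preceq 2dI$ only yields the weaker $\beta\le\tfrac{(1+\mu)+(1-\mu)\varsigma^2}{2}$, so obtaining the sharp constant forces one to genuinely use the orthogonality $f\perp1_V$ together with the balance hypothesis to show that the top-eigenvector component of $f_\sml$ is negligible — morally, that an eigenfunction with eigenvalue close to $-1$ oscillates enough that its positive and negative supports are nearly balanced in size.
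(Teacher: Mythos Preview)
Your core computation is the same as the paper's: both expand the quadratic form $\langle (dI+T)f_\str,f_\str\rangle$ (the paper writes it as $\sum a_{uv}(f(u)+f(v))^2$ and splits $f=f_\str+f_\sml$), identify it with $2d\beta\|f_\str\|_2^2$, and arrive at
\[
2\beta\;\le\;(1+\mu)+(1-\mu)\,\frac{\|f_\sml\|_2^2}{\|f_\str\|_2^2}\;\le\;(1+\mu)+(1-\mu)\varsigma^2
\]
via the crude estimate $\langle Tf_\sml,f_\sml\rangle\le d\|f_\sml\|_2^2$. That is \emph{exactly} the bound the paper proves; the paper uses nothing sharper than this trivial operator bound.

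Where you and the paper part ways is in the next step. You correctly observe that $(1+\mu)+(1-\mu)\varsigma^2$ is \emph{not} equal to $\frac{1+\mu}{1-\varkappa}$: indeed $1+\varsigma^2=\frac{1}{1-\varkappa}$, so the two differ by $-2\mu\varsigma^2>0$ (recall $\mu<0$). The paper's displayed ``$=$'' is an algebraic slip --- in its chain $\frac{1-\mu}{1-\mu_2-(1+\mu)}+1$ simplifies to $\frac{1-\mu_2-2\mu}{1-\mu_2-(1+\mu)}$, not $\frac{1-\mu_2}{1-\mu_2-(1+\mu)}$. So the inequality you call ``weaker'' is in fact all that is established, and the statement of the theorem should read $\beta\le\frac{(1+\mu)+(1-\mu)\varsigma^2}{2}$, with the remaining quantities being (larger-or-equal) rewritings only when $\mu=0$.

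Your proposed refinement cannot close this gap. Splitting $f_\sml=\langle f_\sml,u\rangle u+f_\sml^\perp$ gives
\[
\langle(dI+T)f_\sml,f_\sml\rangle\le d(1+\mu_2)\|f_\sml\|_2^2+d(1-\mu_2)\langle f_\sml,u\rangle^2,
\]
and feeding this into your identity yields $2\beta\le(1+\mu)+(\mu_2-\mu)\varsigma^2+(1-\mu_2)\,\langle f_\sml,u\rangle^2/\|f_\str\|_2^2$. But the only control on $\langle f_\sml,u\rangle$ available from \cref{Lemma:DecompositionStrSml} (or from the support-imbalance argument you sketch, which is the proof of \cref{Lemma:DiffOfL2NormOFf+f-}) is $|\langle f_\sml,u\rangle|\le\|f_\sml\|_2$, and plugging that in collapses back to $(1+\mu)+(1-\mu)\varsigma^2$. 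To reach $\beta\le\frac{1+\mu}{2(1-\varkappa)}$ you would need $(1-\mu_2)\langle f_\sml,u\rangle^2\le(1+2\mu-\mu_2)\|f_\sml\|_2^2$, whose right-hand side is negative whenever $\mu$ is close to $-1$ and $\mu_2>-1$, so no estimate on $\langle f_\sml,u\rangle$ can save it.

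In short: your argument for the first displayed inequality is correct and is the paper's argument; the equalities that follow are a typo in the paper, not something you are missing; and you should drop the attempted sharpening. The ``Consequently'' bounds then follow (with $1-\mu_2$ replaced by $1-\mu_2-2\mu\le 3-\mu_2$ if one insists on the exact constants), and your derivations of the last two chains are fine.
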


\begin{proof}
Since $f$ is an eigenvector of $T$ with eigenvalue $d\mu$, it follows that 
\begin{align*}
& 2d(1 + \mu) \|f\|_2^2\\
& = \sum_{u, v} a_{uv}(f(u) + f(v))^2 \\
& = \sum_{u, v} a_{uv}(
(f_\str(u) + f_\str(v))
+ 
(f_\sml(u) + f_\sml(v))
)^2 \\
& = 
\sum_{u, v} a_{uv} (f_\str(u) + f_\str(v))^2
+ 
\sum_{u, v} a_{uv}(f_\sml(u) + f_\sml(v))^2  \\
& + 
2\sum_{u, v} a_{uv} (f_\str(u) + f_\str(v)) (f_\sml(u) + f_\sml(v)) \\
& = 4d\beta \|f_\str\|_2^2
+ 
\sum_{u, v} a_{uv}(f_\sml(u) + f_\sml(v))^2  
 + 
2\sum_{u, v} a_{uv} (f_\str(u) + f_\str(v)) (f_\sml(u) + f_\sml(v)) \\
& = 4d\beta \|f_\str\|_2^2 + 2d\|f_\sml\|_2^2 -2 \langle A f_\sml, f_\sml\rangle 
+ 
2d\langle f_\str, f_\sml \rangle + 4\langle Af, f_\sml \rangle \\
& = 4d\beta \|f_\str\|_2^2 + 2d\|f_\sml\|_2^2 -2 \langle A f_\sml, f_\sml\rangle 
+ 4\langle d\mu f, f_\sml \rangle \\
& = 4d\beta \|f_\str\|_2^2 + 2d\|f_\sml\|_2^2 -2 \langle A f_\sml, f_\sml\rangle 
+ 4 d\mu \| f_\sml \|_2^2 \\
& = 4d\beta \|f_\str\|_2^2 + 2d(1+ \mu) \|f_\sml\|_2^2 
+ 2 d\mu \| f_\sml \|_2^2 -2 \langle A f_\sml, f_\sml\rangle \\
& \geq 4d\beta \|f_\str\|_2^2 + 2d(1+ \mu) \|f_\sml\|_2^2 
+ 2 d\mu \| f_\sml \|_2^2  - 2d \|f_\sml\|_2^2.
\end{align*}
This shows that 
\begin{align*}
2\beta
& \leq  (1+ \mu)  + (1- \mu) \frac {\| f_\sml \|_2^2 }{\|f_\str\|_2^2 } \\
& \leq  1+ \mu  + (1- \mu) \varsigma^2 \quad 
\text{(using \cref{Lemma:DecompositionStrSml})}\\
& = (1- \mu) \frac{1+ \mu} {1- \mu_2 - (1+\mu)} + 1 + \mu\\
& = (1+ \mu) \left(\frac{1- \mu} {1- \mu_2 - (1+\mu)} +1\right)\\
& = (1- \mu_2) \frac{1+\mu} {1- \mu_2 - (1+\mu)} \\
& = (1- \mu_2) \frac \varkappa {1- \varkappa}.
\end{align*}
\end{proof}

\section{Preparatory lemmas}

\subsection{Controlling the norms}

\begin{lemma}
[Bounding the $\ell^1$-norm]
\label{Lemma:fsmlL1bdd}
For any subset $X$ of $V$, the inequalities
\begin{align*}
\frac 1{\sqrt n}
\left|
\sum_{x \in \supp(f_+) \cap X}   |f_\sml(x)  |
\right|
& \leq 
\varsigma
\frac{\sqrt{|\supp(f_+)\cap X|}}{\sqrt n}
  \|f_\str\|_2 \\
\frac 1{\sqrt n} \left|\sum_{x \in \supp(f_+) \cap X } f(x) \right| 
& \geq \|f_\str\|_2 \frac{|\supp(f_+) \cap X|}{ n} 
-
\varsigma \|f_\str\|_2 
\frac{\sqrt{|\supp(f_+) \cap X|} }{\sqrt n}
\end{align*}
hold. 
\end{lemma}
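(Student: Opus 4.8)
The plan is to deduce both inequalities from two applications of the Cauchy--Schwarz inequality together with the norm bounds of \cref{Lemma:DecompositionStrSml}. The one structural fact to set up first is the explicit shape of $f_\str$ recorded inside the proof of \cref{Lemma:DecompositionStrSml}: writing $c := \frac1n\langle |f|, 1_V\rangle$, one has $f_\str = c\,(1_{f>0} - 1_{f<0})$, so $|f_\str|$ is the constant $c$ on all of $V$. Since $f$ is nowhere vanishing by \cref{EigenFunction}(2), $\supp(f_+)\cup\supp(f_-) = V$, hence $\|f_\str\|_2^2 = nc^2$, i.e. $c = \|f_\str\|_2/\sqrt n$; in particular $f_\str(x) = c = \|f_\str\|_2/\sqrt n$ for every $x \in \supp(f_+)$.

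For the first inequality I would apply Cauchy--Schwarz on the index set $\supp(f_+)\cap X$:
$$
\sum_{x\in\supp(f_+)\cap X}|f_\sml(x)|
\le \sqrt{|\supp(f_+)\cap X|}\;\Bigl(\sum_{x\in\supp(f_+)\cap X}f_\sml(x)^2\Bigr)^{1/2}
\le \sqrt{|\supp(f_+)\cap X|}\,\|f_\sml\|_2 ,
$$
and then invoke $\|f_\sml\|_2 \le \varsigma\|f_\str\|_2$ from \cref{Lemma:DecompositionStrSml}. Dividing by $\sqrt n$ gives the asserted bound (the outer absolute value on the left is vacuous).

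For the second inequality, decompose $f = f_\str + f_\sml$ on $\supp(f_+)\cap X$, where $f_\str$ is the constant $c$, so that
$$
\sum_{x\in\supp(f_+)\cap X}f(x)
= c\,|\supp(f_+)\cap X| + \sum_{x\in\supp(f_+)\cap X}f_\sml(x) .
$$
By the triangle inequality and the estimate just proved (before dividing by $\sqrt n$),
$$
\Bigl|\sum_{x\in\supp(f_+)\cap X}f(x)\Bigr|
\ge c\,|\supp(f_+)\cap X| - \sqrt{|\supp(f_+)\cap X|}\,\|f_\sml\|_2 .
$$
Substituting $c = \|f_\str\|_2/\sqrt n$, dividing by $\sqrt n$, and using $\|f_\sml\|_2 \le \varsigma\|f_\str\|_2$ yields exactly the stated lower bound.

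I do not anticipate a genuine obstacle: the only point needing care is the bookkeeping that identifies $f_\str$ as the constant $\|f_\str\|_2/\sqrt n$ on $\supp(f_+)$, which rests on $f$ being nowhere zero (\cref{EigenFunction}(2)) and on the computation of $f_\str$ carried out in \cref{Lemma:DecompositionStrSml}. Everything else is a routine Cauchy--Schwarz estimate and a substitution.
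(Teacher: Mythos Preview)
Your proposal is correct and follows essentially the same route as the paper: Cauchy--Schwarz on the index set $\supp(f_+)\cap X$ to bound $\sum |f_\sml|$, then the decomposition $f=f_\str+f_\sml$ with the identification $f_\str\equiv \|f_\str\|_2/\sqrt n$ on $\supp(f_+)$, combined with $\|f_\sml\|_2\le\varsigma\|f_\str\|_2$ from \cref{Lemma:DecompositionStrSml}. The paper phrases the constant as $\langle |f|,u\rangle$ rather than your $c$, but the computation is the same.
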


\begin{proof}
Note that 
\begin{align*}
\left(
\sum_{x \in \supp(f_+) \cap X}   |f_\sml(x)  |
\right)^2 
& \leq  |\supp(f_+)\cap X|\|f_\sml 1_{\supp(f_+)\cap X}\|_2^2\\
& \leq |\supp(f_+)\cap X|\|f_\sml 1_{f>0}\|_2^2\\
& \leq |\supp(f_+)\cap X|\|f_\sml\|_2^2\\
& \leq 
\varsigma^2
|\supp(f_+)\cap X|
  \|f_\str\|_2^2
  \quad 
  \text{(using \cref{Lemma:DecompositionStrSml})}.
\end{align*}
Moreover, we have 
\begin{align*}
 \left|\sum_{x \in \supp(f_+) \cap X } f(x) \right| 
& = \left|\sum_{x \in \supp(f_+) \cap X } f_\str(x) + f_\sml(x)  \right| \\
& \geq \left|\sum_{x \in \supp(f_+) \cap X } f_\str(x) \right| - \left|\sum_{x \in \supp(f_+) \cap X }  f_\sml(x)  \right| \\
& = \langle |f|, u\rangle \frac{|\supp(f_+) \cap X|}{\sqrt n} 
- \left|\sum_{x \in \supp(f_+) \cap X }  f_\sml(x)  \right| \\
& \geq \|f_\str\|_2 \frac{|\supp(f_+) \cap X|}{\sqrt n} 
-
\varsigma \|f_\str\|_2 \sqrt{|\supp(f_+) \cap X|} .
\end{align*}
\end{proof}

\begin{lemma}
[Upper bound on the $\ell^2$-norm]
\label{Lemma:DiffOfL2NormOFf+f-}
Suppose $\varkappa$ lies in $[0,1/5]$. 
The inequalities 
$$
|\supp(f_+)|\leq \frac{1+\varsigma}2 n,
\quad 
|\supp(f_-)|\geq \frac{1-\varsigma}2 n,
\quad 
|\supp(f_-)|\geq \frac{1-\varsigma}{1+ \varsigma} |\supp(f_+)|,$$
$$
\|f_+\|_2^2 
\leq 
\frac {1  + \varsigma + \varkappa + 2 \sqrt 2 \sqrt{\varkappa}} 2 \|f\|_2^2$$
hold. 
\end{lemma}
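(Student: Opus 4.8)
The plan is to reduce everything to the explicit shape of $f_\str$ and $f_\sml$ recorded inside the proof of \cref{Lemma:DecompositionStrSml}. Set $c := \tfrac1n\langle |f|, 1_V\rangle$, so $c>0$, $f_\str = c\,(1_{f>0} - 1_{f<0})$, $f_\sml = f - f_\str$ (hence $f_\sml = f - c$ on $\supp(f_+)$ and $f_\sml = f + c$ on $\supp(f_-)$), $\|f_\str\|_2^2 = c^2 n$, and $\|f_\str\|_2^2 + \|f_\sml\|_2^2 = \|f\|_2^2$ since $f_\str \perp f_\sml$. From \cref{Lemma:DecompositionStrSml} I keep $\|f_\sml\|_2^2 \le \varkappa\|f\|_2^2$ and $\|f_\sml\|_2 \le \varsigma\|f_\str\|_2$. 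Put $p := |\supp(f_+)|$ and $q := |\supp(f_-)|$; since $f$ vanishes nowhere we have $p+q=n$, and since $|\supp(f_+)|\ge|\supp(f_-)|$ we have $p \ge n/2$ (both facts from \cref{EigenFunction}). The hypothesis $\varkappa\in[0,\tfrac15]$ will be used only to get $\varsigma = \sqrt{\varkappa}/\sqrt{1-\varkappa}\le\tfrac12<1$, which is what makes the first three assertions substantive.

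For the support-size bounds I would use $\langle f,1_V\rangle = 0$ from \cref{EigenFunction}: writing $f = f_\str + f_\sml$ gives $\langle f_\str,1_V\rangle = -\langle f_\sml,1_V\rangle$, the left side is $c(p-q)$, and by Cauchy--Schwarz together with \cref{Lemma:DecompositionStrSml},
\[
|c(p-q)| = |\langle f_\sml,1_V\rangle| \le \|f_\sml\|_2\sqrt n \le \varsigma\|f_\str\|_2\sqrt n = \varsigma c n .
\]
Dividing by $c$ gives $|p-q|\le\varsigma n$, and with $p+q=n$ this yields at once $p\le\tfrac{1+\varsigma}2n$, $q\ge\tfrac{1-\varsigma}2n$, and $q\ge\tfrac{1-\varsigma}{1+\varsigma}p$ (the last being the rearrangement $p-q\le\varsigma(p+q)$ of the same estimate).

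For the $\ell^2$-bound the point is to expand $\|f_+\|_2^2$ using $f = c + f_\sml$ on $\supp(f_+)$ and to evaluate the cross term \emph{exactly}:
\[
\|f_+\|_2^2 = \sum_{x\in\supp(f_+)}(c+f_\sml(x))^2 = c^2 p + 2c\sum_{x\in\supp(f_+)}f_\sml(x) + \sum_{x\in\supp(f_+)}f_\sml(x)^2 .
\]
Here $\sum_{x\in\supp(f_+)}f_\sml(x) = \big(\sum_{x\in\supp(f_+)}f(x)\big) - cp$, and $\langle f,1_V\rangle = 0$ forces the positive and negative masses of $f$ to coincide, so $\sum_{x\in\supp(f_+)}f(x) = \tfrac12\langle|f|,1_V\rangle = \tfrac{cn}2$; thus the cross term is $c^2 n - 2c^2 p$ and $\|f_+\|_2^2 = c^2 n - c^2 p + \sum_{x\in\supp(f_+)}f_\sml(x)^2$. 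Now $\sum_{x\in\supp(f_+)}f_\sml(x)^2\le\|f_\sml\|_2^2$, and $p\ge n/2$ gives $c^2 p\ge\tfrac12 c^2 n = \tfrac12\|f_\str\|_2^2$, so, using $\|f_\str\|_2^2 = \|f\|_2^2 - \|f_\sml\|_2^2$,
\[
\|f_+\|_2^2 \le \tfrac12\|f_\str\|_2^2 + \|f_\sml\|_2^2 = \tfrac12\|f\|_2^2 + \tfrac12\|f_\sml\|_2^2 \le \tfrac{1+\varkappa}2\|f\|_2^2 \le \tfrac{1+\varsigma+\varkappa+2\sqrt2\sqrt\varkappa}2\|f\|_2^2 ,
\]
the last inequality because $\varsigma,\sqrt\varkappa\ge0$.

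I expect the only delicate point to be resisting the temptation to bound the cross term $2c\sum_{\supp(f_+)}f_\sml$ by the triangle inequality instead of evaluating it: the crude route loses the cancellation against $c^2 p$ and leaves a term of order $\sqrt{\varkappa}\,\|f\|_2^2$ (presumably the source of the $2\sqrt2\sqrt\varkappa$ in the stated constant), whereas keeping the exact value $c(\tfrac n2-p)$ and inserting $p\ge n/2$ collapses the estimate to $\tfrac{1+\varkappa}2\|f\|_2^2$, whence the claimed inequality follows with room to spare. A small preliminary worth checking explicitly is the orthogonality $f_\str\perp f_\sml$, which follows from $\langle f,1_{f>0}-1_{f<0}\rangle = \langle|f|,1_V\rangle$ and is already present in the proof of \cref{Lemma:DecompositionStrSml}.
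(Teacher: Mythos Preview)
Your argument is correct. For the support-size inequalities you follow essentially the paper's route (orthogonality of $f$ to $1_V$, then Cauchy--Schwarz on $\langle f_\sml,1_V\rangle$). For the $\ell^2$-bound, however, you take a genuinely different and sharper path. The paper estimates $\|f_+\|_2^2 - \|f_-\|_2^2$ by writing each $\|f_\pm\|_2$ as $\|f_\str 1_{f\gtrless 0} + f_\sml 1_{f\gtrless 0}\|_2$ and applying the triangle and reverse-triangle inequalities; this introduces the cross terms that produce the $\varsigma$ and $2\sqrt 2\sqrt{\varkappa}$ contributions in the stated constant, and it forces the side check $\|f_\str 1_{f<0}\|_2 \ge \|f_\sml 1_{f<0}\|_2$, which is precisely where the hypothesis $\varkappa\le 1/5$ is used. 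You instead expand $\|f_+\|_2^2$ directly and, using $\langle f,1_V\rangle=0$, evaluate the cross term $2c\sum_{\supp(f_+)} f_\sml$ \emph{exactly} rather than bounding it, arriving at the clean identity $\|f_+\|_2^2 = \|f_\str\|_2^2\,\tfrac{q}{n} + \|f_\sml 1_{f>0}\|_2^2$. With $q\le n/2$ this gives the strictly stronger bound $\|f_+\|_2^2 \le \tfrac{1+\varkappa}{2}\|f\|_2^2$, from which the stated inequality follows trivially; moreover this part of your proof needs no restriction on $\varkappa$ beyond $\varkappa<1$. In short, the paper's approach explains the shape of the constant in the statement, while yours shows that constant is not optimal.
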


\begin{proof}

Since $\langle f, u\rangle =0$, it follows that 
$\langle f_\str, u\rangle 
= - \langle f_\sml, u\rangle $. 
Note that 
\begin{align*}
\langle f_\str, u\rangle 
& =  
\langle \frac 1 { n}  \langle |f|,  1_V \rangle (1_{f> 0 } - 1_{f< 0}), u\rangle \\
& =  \frac 1 { n}  \langle |f|,  u \rangle 
\langle (1_{f> 0 } - 1_{f< 0}), 1_V \rangle \\
& =  \frac 1 { n}  \|f_\str\|_2 (|\supp(f_+)| - |\supp (f_-)|) \\
& \geq \frac{|\supp(f_+)| - |\supp (f_-)|}{n} \sqrt{1-\varkappa }\|f\|_2
\quad 
\text{(using \cref{EigenFunction}(3))} ,
\end{align*}
and 
\begin{align*}
\frac{|\supp(f_+)| - |\supp (f_-)|}{n}\sqrt{1 - \varkappa}\|f\|_2 
& \leq \langle f_\str , u\rangle\\
& = |\langle f_\sml , u\rangle |\\
& \leq \langle |f_\sml| , u\rangle  \\
& \leq \|f_\sml\|_2 \\
& \leq \sqrt{\varkappa }\|f\|_2
\end{align*}
hold. 
This yields 
$$\frac{|\supp(f_+)| - |\supp (f_-)|}{n}\sqrt{1 - \varkappa}
 \leq \sqrt{\varkappa }.$$
Combining the bound 
$$
|\supp(f_+)| - |\supp(f_-)| \leq \varsigma n$$
with 
$$|\supp(f_+)| + |\supp(f_-)| = n,$$
we obtain the stated bounds on the sizes of the supports of $f_+, f_-$. 
Also note that 
\begin{align*}
\|f_\sml 1_{f<0} \|_2^2 
& \leq \|f_\sml\|_2^2\\
& \leq \varsigma^2  \|f_\str\|_2^2 
\quad 
\text{(using \cref{Lemma:DecompositionStrSml})},\\
\|f_\str 1_{f<0}\|_2^2 
& = \frac{\langle |f|, u\rangle^2 }{ n}  |\supp(f_-)|\\
& = \|f_\str\|_2^2 \frac {|\supp(f_-)|} n \\
& \geq  \|f_\str\|_2^2 \frac {1 - \varsigma} 2
\end{align*}
hold. 
Since $\varkappa$ lies in $[0, 1/5]$, we have $\varsigma \in [0, 1/2]$, which implies that 
$\frac {1-\varsigma}2 \geq \varsigma^2$, 
and hence 
\begin{equation}
\label{Eqn:StrSmlBdd1}
\|f_\str 1_{f<0}\|_2 
\geq 
\|f_\sml 1_{f<0} \|_2 
.
\end{equation}
Also note that 
\begin{align*}
\|f_\sml 1_{f>0} \|_2^2 
& \leq \|f_\sml\|_2^2\\
& \leq \varkappa  \|f\|_2^2 \\
& \leq  \frac {\varkappa }{1 - \varkappa }  \|f_\str\|_2^2 
\quad 
\text{(using \cref{Lemma:DecompositionStrSml})},\\
\|f_\str 1_{f>0}\|_2^2 
& = \frac{\langle |f|, u\rangle^2 }{ n}  |\supp(f_+)|\\
& = \|f_\str\|_2^2 \frac {|\supp(f_+)|} n \\
& \geq  \frac 12 \|f_\str\|_2^2 
\end{align*}
hold. 
Since $\varkappa$ lies in $[0, 1/3]$, we obtain 
$$
\|f_\str 1_{f>0}\|_2 
\geq 
\|f_\sml 1_{f>0} \|_2 
.$$
Moreover, the following
\begin{align*}
\|f_+\|_2^2
& = \|f 1_{f>0}\|_2^2 \\
& = \|f_\str 1_{f>0} + f_\sml 1_{f>0} \|_2^2 \\
& \leq (\|f_\str 1_{f>0}\|_2 + \|f_\sml 1_{f>0} \|_2)^2 ,\\
\|f_-\|_2^2
& = \|f 1_{f<0}\|_2^2 \\
& = \|f_\str 1_{f<0} + f_\sml 1_{f<0} \|_2^2 \\
& \geq (\|f_\str 1_{f<0}\|_2 - \|f_\sml 1_{f<0} \|_2)^2
\quad 
\text{(using \cref{Eqn:StrSmlBdd1})}  
\end{align*}
hold. This yields 
\begin{align*}
& \|f_+\|_2^2 - \|f_-\|_2^2\\
& \leq 
(\|f_\str 1_{f>0}\|_2 + \|f_\sml 1_{f>0} \|_2)^2 - (\|f_\str 1_{f<0}\|_2 - \|f_\sml 1_{f<0} \|_2)^2 \\
& \leq 
\|f_\str 1_{f>0}\|_2^2 + \|f_\sml 1_{f>0} \|_2^2 
+ 2\|f_\str 1_{f>0}\|_2 \|f_\sml 1_{f>0} \|_2 \\
& - \|f_\str 1_{f<0}\|_2^2 -   \|f_\sml 1_{f<0} \|_2^2 + 2 \|f_\str 1_{f<0}\|_2\|f_\sml 1_{f<0} \|_2 \\
& = 
\|f_\str \|_2^2 \frac{|\supp(f_+)|} n - \|f_\str \|_2^2 \frac{|\supp(f_-)|} n + \|f_\sml 1_{f>0} \|_2^2  -   \|f_\sml 1_{f<0} \|_2^2 \\
& + 2\|f_\str 1_{f>0}\|_2 \|f_\sml 1_{f>0} \|_2 
 + 2 \|f_\str 1_{f<0}\|_2\|f_\sml 1_{f<0} \|_2 \\
& \leq 
\|f_\str \|_2^2 \frac{|\supp(f_+)| - |\supp(f_-)|} n + \|f_\sml 1_{f>0} \|_2^2  -   \|f_\sml 1_{f<0} \|_2^2 \\
& + 2 (\|f_\str 1_{f>0}\|_2 
 +  \|f_\str 1_{f<0}\|_2) \|f_\sml \|_2 \\
& = 
\|f_\str \|_2^2 \frac{|\supp(f_+)| - |\supp(f_-)|} n + \|f_\sml 1_{f>0} \|_2^2  -   \|f_\sml 1_{f<0} \|_2^2 \\
& + 2 \frac{\sqrt{|\supp(f_+)|} + \sqrt{|\supp(f_-)|}}{\sqrt n} \|f_\str \|_2 \|f_\sml \|_2 \\
& \leq 
\left(
 \frac{|\supp(f_+)| - |\supp(f_-)|} n + \varkappa 
 + 2 \frac{\sqrt{|\supp(f_+)|}+\sqrt{|\supp(f_-)|} }{\sqrt n} \sqrt{\varkappa  }
 \right)
 \|f \|_2^2 \\
& \leq 
\left(
\varsigma + \varkappa 
 + 2 \sqrt 2\sqrt{\varkappa  }
 \right)
 \|f \|_2^2 .
\end{align*}
Using $ \|f_+\|_2^2 + \|f_-\|_2^2 = \|f\|_2^2$, we obtain  the bound on $\|f_+\|_2^2$. 
\end{proof}

\begin{lemma}
[Lower bound on the $\ell^2$-norm]
\label{Lemma:ControllingL2NormFromBelow}
Suppose $\varkappa$ lies in $[0,1/3]$. 
The bounds 
\begin{align*}
\|f_+\|_2 & \geq \vartheta \|f\|_2, \\
\|f_-\|_2 
& \leq \sqrt {1 - \vartheta^2} \|f\|_2  
\leq \sqrt{\vartheta^{-2} -1} \|f_+\|_2 
\end{align*}
hold. 
\end{lemma}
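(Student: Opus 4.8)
The plan is to bound $\|f_+\|_2$ from below using the orthogonal decomposition $f = f_\str + f_\sml$ of \cref{Lemma:DecompositionStrSml} together with the explicit description of $f_\str$ extracted in its proof. First I would recall that $f_\str = \frac 1n \langle |f|, 1_V\rangle (1_{f>0} - 1_{f<0})$, so that $f_\str 1_{f>0} = \frac 1n \langle |f|, 1_V\rangle 1_{f>0}$ and hence $\|f_\str 1_{f>0}\|_2^2 = \frac{\langle |f|, u\rangle^2}{n}|\supp(f_+)| = \|f_\str\|_2^2\, \frac{|\supp(f_+)|}{n}$. Since $f$ vanishes nowhere (\cref{EigenFunction}(2)), the sets $\supp(f_+)$ and $\supp(f_-)$ partition $V$, and \cref{EigenFunction}(3) forces $|\supp(f_+)|\geq n/2$; combining this with $\|f_\str\|_2 \geq \sqrt{1-\varkappa}\,\|f\|_2$ from \cref{Lemma:DecompositionStrSml} yields $\|f_\str 1_{f>0}\|_2 \geq \tfrac 1{\sqrt 2}\sqrt{1-\varkappa}\,\|f\|_2$.

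Next I would apply the reverse triangle inequality to the restriction of $f = f_\str + f_\sml$ to $\supp(f_+)$: since $f_+ = f 1_{f>0} = f_\str 1_{f>0} + f_\sml 1_{f>0}$, we get
$$\|f_+\|_2 \geq \|f_\str 1_{f>0}\|_2 - \|f_\sml 1_{f>0}\|_2 \geq \tfrac 1{\sqrt 2}\sqrt{1-\varkappa}\,\|f\|_2 - \sqrt{\varkappa}\,\|f\|_2 = \vartheta\|f\|_2,$$
using $\|f_\sml 1_{f>0}\|_2 \leq \|f_\sml\|_2 \leq \sqrt\varkappa\,\|f\|_2$ (again \cref{Lemma:DecompositionStrSml}). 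The hypothesis $\varkappa \in [0,1/3]$ enters only to guarantee $\vartheta\geq 0$ (one checks $\vartheta \geq 0 \iff \tfrac12(1-\varkappa)\geq\varkappa \iff \varkappa\leq 1/3$), so that this lower bound is not vacuous; for $\varkappa<1/3$ one has $\vartheta>0$.

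For the bounds on $\|f_-\|_2$ I would invoke the Pythagorean identity $\|f\|_2^2 = \|f_+\|_2^2 + \|f_-\|_2^2$ (valid because $f$ vanishes nowhere, so $f = f_+ + f_-$ with disjoint supports). This immediately gives $\|f_-\|_2^2 = \|f\|_2^2 - \|f_+\|_2^2 \leq (1-\vartheta^2)\|f\|_2^2$, i.e. $\|f_-\|_2\leq\sqrt{1-\vartheta^2}\,\|f\|_2$. Then, dividing the first bound to obtain $\|f\|_2 \leq \vartheta^{-1}\|f_+\|_2$ (for $\varkappa<1/3$) and substituting, $\|f_-\|_2 \leq \sqrt{1-\vartheta^2}\,\vartheta^{-1}\|f_+\|_2 = \sqrt{\vartheta^{-2}-1}\,\|f_+\|_2$, which is the claimed chain of inequalities.

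There is no genuine obstacle here: the argument is elementary once \cref{Lemma:DecompositionStrSml} and the explicit form of $f_\str$ are in hand. The only points meriting a word of care are that $\|f\|_2>0$ (true since $f$ is a nonzero eigenfunction), and the degenerate endpoint $\varkappa=1/3$, where $\vartheta=0$ and the second displayed bound is trivial while the factor $\sqrt{\vartheta^{-2}-1}$ should be read in the extended sense (in practice the lemma is only applied with $\varkappa$ strictly below $1/3$).
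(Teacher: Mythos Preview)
Your proof is correct and follows essentially the same route as the paper: both compute $\|f_\str 1_{f>0}\|_2^2 = \|f_\str\|_2^2\,|\supp(f_+)|/n \geq \tfrac12(1-\varkappa)\|f\|_2^2$, apply the (reverse) triangle inequality to $f_+ = f_\str 1_{f>0} + f_\sml 1_{f>0}$ together with $\|f_\sml\|_2\leq\sqrt\varkappa\,\|f\|_2$, and then derive the $\|f_-\|_2$ bounds from the Pythagorean identity. The only cosmetic difference is that the paper writes the chain starting from $(1-\varkappa)\|f\|_2^2$ and arriving at $2(\|f_+\|_2+\|f_\sml\|_2)^2$, whereas you start from $\|f_+\|_2$ and bound below.
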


\begin{proof}
Note that 
\begin{align*}
\left( 1 - \varkappa\right)\|f\|_2^2
& \leq \| f_\str \|_2^2
\quad 
\text{(using \cref{Lemma:DecompositionStrSml})}\\
& = \frac {\langle |f|, u\rangle^2}n |V| \\
& \leq 2 \frac {\langle |f|, u\rangle^2}n |\supp(f_+)| \\
& = 2 \|f_\str 1_{f>0} \|_2^2 \\
& = 2 \|f_+ - f_\sml 1_{f>0} \|_2^2 \\
& \leq 2 (\|f_+\|_2 + \|f_\sml 1_{f>0} \|_2)^2 \\
& \leq 2 (\|f_+\|_2 + \|f_\sml \|_2)^2 
\end{align*}
hold, which implies that 
\begin{align*}
\frac 1{\sqrt 2}\sqrt{ 1 - \varkappa}\|f\|_2
& \leq  \|f_+\|_2 + \|f_\sml \|_2\\
& \leq  \|f_+\|_2 + \sqrt{\varkappa}\|f \|_2
\quad 
\text{(using \cref{Lemma:DecompositionStrSml})}.
\end{align*}
This yields the bound 
$
\vartheta 
\|f\|_2
 \leq  \|f_+\|_2$. 
Using $\|f\|_2^2 = \|f_+\|_2^2 + \|f_-\|_2^2$ and $0 \leq \vartheta \leq 1$ for $\varkappa \in [0, \frac 13]$, the remaining bounds follow. 
\end{proof}

\subsection{Comparison along translates and two partitions}

For a permutation $\pi: V \to V$ and a subset $\scrV$ of $V$, define 
$$\calI_{\pi, \scrV }  :=\supp(1_\scrV  1_{\pi\cdot 1_\scrV }) =  \scrV  \cap \pi (\scrV ).$$

\begin{lemma}
\label{Lemma:InnerProdOfPlusPartWithPlusPartOfATranslate}
Let $\pi : V \to V$ be a permutation. 
The inequalities 
\begin{align*}
\left|
\langle f_+, (\pi \cdot f)_+\rangle 
- 
\left\langle 
\frac{\|f_\str\|_2}{\sqrt n} 1_{f>0} , \frac{\|f_\str\|_2}{\sqrt n} 1_{\pi \cdot f>0}
\right\rangle 
\right| 
& = 
\left|
\langle f_+, (\pi \cdot f)_+\rangle - \frac{|\calI_{\pi, f>0} |}n \|f_\str\|_2^2 
\right|\\
& \leq 
\left(
2\frac{\sqrt{|\calI_{\pi, f>0} |}}{\sqrt n}
\sqrt \varkappa 
+ 
\varkappa 
\right) 
\|f\|_2^2 , \\
\left|
\langle f_+, (\pi \cdot f)_+\rangle - \frac{|\calI_{\pi, f>0} |}n \|f\|_2^2 
\right|
& \leq 
\left(  \frac{|\calI_{\pi, f>0} |}n \varkappa+ 
2\frac{\sqrt{|\calI_{\pi, f>0} |}}{\sqrt n}
\sqrt \varkappa 
+ 
\varkappa 
\right) 
\|f\|_2^2 
\end{align*}
hold. 
\end{lemma}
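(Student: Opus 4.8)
The plan is to expand both $f_+$ and $(\pi\cdot f)_+$ along the orthogonal decomposition $f=f_\str+f_\sml$ of \cref{Lemma:DecompositionStrSml} and to control the resulting cross terms by the Cauchy--Schwarz inequality together with the bound $\|f_\sml\|_2\leq\sqrt\varkappa\|f\|_2$. First I would record that $f_+=\frac{\|f_\str\|_2}{\sqrt n}1_{f>0}+f_\sml 1_{f>0}$: the computation inside the proof of \cref{Lemma:DecompositionStrSml} gives $f_\str=\frac1n\langle|f|,1_V\rangle(1_{f>0}-1_{f<0})$, and, $f$ being nowhere zero by \cref{EigenFunction}(2), $\|f_\str\|_2=\frac1{\sqrt n}\langle|f|,1_V\rangle$, so multiplying $f_\str$ by $1_{f>0}$ annihilates the $1_{f<0}$ term. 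Since $\pi$ is a permutation it preserves the counting measure, whence $1_{\pi\cdot f>0}=\pi\cdot1_{f>0}$, $\langle|\pi\cdot f|,1_V\rangle=\langle|f|,1_V\rangle$, and therefore $(\pi\cdot f)_\str=\pi\cdot f_\str$, $(\pi\cdot f)_\sml=\pi\cdot f_\sml$, with $\|(\pi\cdot f)_\str\|_2=\|f_\str\|_2$ and $\|(\pi\cdot f)_\sml\|_2=\|f_\sml\|_2\leq\sqrt\varkappa\|f\|_2$; the same reasoning gives $(\pi\cdot f)_+=\frac{\|f_\str\|_2}{\sqrt n}1_{\pi\cdot f>0}+(\pi\cdot f)_\sml 1_{\pi\cdot f>0}$.

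Substituting these two expressions into $\langle f_+,(\pi\cdot f)_+\rangle$ expands it into four terms. The principal term is $\frac{\|f_\str\|_2^2}{n}\langle1_{f>0},1_{\pi\cdot f>0}\rangle$, which equals $\langle\frac{\|f_\str\|_2}{\sqrt n}1_{f>0},\frac{\|f_\str\|_2}{\sqrt n}1_{\pi\cdot f>0}\rangle$ and, since $\langle1_{f>0},1_{\pi\cdot f>0}\rangle=|\{f>0\}\cap\pi(\{f>0\})|=|\calI_{\pi,f>0}|$, also equals $\frac{|\calI_{\pi,f>0}|}{n}\|f_\str\|_2^2$; this is the equality on the left of the first displayed inequality. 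The three remaining terms are $\frac{\|f_\str\|_2}{\sqrt n}\langle1_{f>0},(\pi\cdot f)_\sml 1_{\pi\cdot f>0}\rangle$, $\frac{\|f_\str\|_2}{\sqrt n}\langle f_\sml 1_{f>0},1_{\pi\cdot f>0}\rangle$, and $\langle f_\sml 1_{f>0},(\pi\cdot f)_\sml 1_{\pi\cdot f>0}\rangle$. In each of the first two, the inner product is a sum over $\calI_{\pi,f>0}$ of the values of $(\pi\cdot f)_\sml$, respectively $f_\sml$, so by Cauchy--Schwarz its modulus is at most $\sqrt{|\calI_{\pi,f>0}|}\,\|f_\sml\|_2\leq\sqrt{|\calI_{\pi,f>0}|}\,\sqrt\varkappa\,\|f\|_2$; together with $\|f_\str\|_2\leq\|f\|_2$ this bounds each by $\frac{\sqrt{|\calI_{\pi,f>0}|}}{\sqrt n}\sqrt\varkappa\,\|f\|_2^2$. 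For the third term, Cauchy--Schwarz and $\|f_\sml\|_2,\|(\pi\cdot f)_\sml\|_2\leq\sqrt\varkappa\|f\|_2$ give the bound $\varkappa\|f\|_2^2$. Adding the three contributions yields the first inequality.

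The second inequality follows from the first by the triangle inequality after replacing $\|f_\str\|_2^2$ by $\|f\|_2^2$: since $\|f\|_2^2-\|f_\str\|_2^2=\|f_\sml\|_2^2\leq\varkappa\|f\|_2^2$ by \cref{Lemma:DecompositionStrSml}, one has $\bigl|\frac{|\calI_{\pi,f>0}|}{n}\|f_\str\|_2^2-\frac{|\calI_{\pi,f>0}|}{n}\|f\|_2^2\bigr|\leq\frac{|\calI_{\pi,f>0}|}{n}\varkappa\|f\|_2^2$, and combining this with the first bound gives the assertion. I do not expect a genuine obstacle here; the only points requiring care are the $\pi$-equivariance of the $\str$/$\sml$ decomposition (which relies solely on $\pi$ preserving the counting measure) and the bookkeeping of the four terms, while everything else is Cauchy--Schwarz and the norm bound of \cref{Lemma:DecompositionStrSml}.
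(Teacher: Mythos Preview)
Your proposal is correct and follows essentially the same route as the paper: expand $f_+$ and $(\pi\cdot f)_+$ via $f=f_\str+f_\sml$, identify the main term $\frac{|\calI_{\pi,f>0}|}{n}\|f_\str\|_2^2$, and bound the three cross terms by Cauchy--Schwarz together with $\|f_\sml\|_2\le\sqrt\varkappa\,\|f\|_2$. The only cosmetic difference is that the paper writes the computation pointwise as sums over $x\in\calI_{\pi,f>0}$ of $f(x)f(\pi^{-1}x)$ rather than invoking the $\pi$-equivariance of the $\str/\sml$ decomposition, and the paper leaves the passage to the second inequality implicit whereas you spell out the triangle-inequality step.
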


\begin{proof}
Note that 
\begin{align*}
& \langle f_+, (\pi \cdot f)_+\rangle \\
& = \sum_{x\in \calI_{\pi, f>0} } f(x) (\pi \cdot f)(x) \\
& = \sum_{x\in \calI_{\pi, f>0} } f(x) f(\pi ^\mo x) \\
& = \sum_{x\in \calI_{\pi, f>0} } (f_\str(x) + f_\sml(x))  (f_\str(\pi ^\mo x)  + f_\sml(\pi ^\mo x) )\\
& = \sum_{x\in \calI_{\pi, f>0} } 
(f_\str(x)f_\str(\pi ^\mo x)  + f_\str(x)f_\sml(\pi ^\mo x) +
f_\sml(x)f_\str(\pi ^\mo x)  + f_\sml(x)f_\sml(\pi ^\mo x) )\\
& = \frac{|\calI_{\pi, f>0} |}n \|f_\str\|_2^2 +
\sum_{x\in \calI_{\pi, f>0} } 
(f_\str(x)f_\sml(\pi ^\mo x) +
f_\sml(x)f_\str(\pi ^\mo x)  + f_\sml(x)f_\sml(\pi ^\mo x) ),
\end{align*}
which shows that 
\begin{align*}
& 
\left|
\langle f_+, (\pi \cdot f)_+\rangle - \frac{|\calI_{\pi, f>0} |}n \|f_\str\|_2^2 
\right|\\
& \leq 
\|f_\str 1_{\calI_{\pi, f>0}}  \|_2 
\|f_\sml 1_{\pi ^\mo (\calI_{\pi, f>0} )} \|_2 
+ 
\|f_\sml 1_{\calI_{\pi, f>0}}  \|_2 
\|f_\str 1_{\pi ^\mo (\calI_{\pi, f>0} )} \|_2 
+
\|f_\sml 1_{\pi ^\mo (\calI_{\pi, f>0} )} \|_2 
\|f_\sml 1_{\calI_{\pi, f>0}}  \|_2 
\\
& \leq  
\|f_\str \|_2  \frac{\sqrt{|\calI_{\pi, f>0} |}}{\sqrt n}
\|f_\sml  \|_2 
+ 
\|f_\sml \|_2 
\|f_\str \|_2  \frac{\sqrt{|\pi ^\mo(\calI_{\pi, f>0} )|}}{\sqrt n}
+
\|f_\sml \|_2^2
\\
& =
2\frac{\sqrt{|\calI_{\pi, f>0} |}}{\sqrt n}
\|f_\sml  \|_2 \|f_\str \|_2  
+
\|f_\sml \|_2^2
\\
& \leq 
\left(
2\frac{\sqrt{|\calI_{\pi, f>0} |}}{\sqrt n}
\sqrt \varkappa 
+ 
\varkappa 
\right) 
\|f\|_2^2 
\quad
\text{(using \cref{Lemma:DecompositionStrSml})}.
\end{align*}
\end{proof}

For a permutation $\pi: V \to V$ and a subset $\scrV$ of $V$, define 
$$\Sigma_{\pi, \scrV }
:= 
\frac{\langle \pi  1_{\scrV }, 1_{\scrV } \rangle + \langle \pi  1_{\scrV ^c}, 1_{\scrV ^c} \rangle}{ n} .$$

\begin{lemma}
\label{Lemma:SumA11A22}
For any permutation $\pi: V \to V$ and for subsets $\calA,\calB$ of $V$, the bound 
\begin{align*}
|\Sigma_{\pi, \calA} - \Sigma_{\pi, \calB}|
& \leq 
\sqrt{2}\sqrt{ 
\frac{|\calA\cap \calB^c| + |\calA^c\cap \calB|}n
} .
\end{align*}
holds. 
\end{lemma}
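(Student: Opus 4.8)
The plan is to first obtain a closed‑form expression for $\Sigma_{\pi,\scrV}$, and then combine a ``linear'' perturbation estimate with the trivial bound $|\Sigma_{\pi,\calA}-\Sigma_{\pi,\calB}|\le 1$. Since $\pi$ acts on $\ell^2(V)$ by $(\pi\cdot 1_\scrV)(v)=1_\scrV(\pi^\mo v)=1_{\pi(\scrV)}(v)$, one has $\langle \pi 1_\scrV,1_\scrV\rangle=|\scrV\cap\pi(\scrV)|=|\calI_{\pi,\scrV}|$ and $\langle \pi 1_{\scrV^c},1_{\scrV^c}\rangle=|\scrV^c\cap\pi(\scrV^c)|$. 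The four sets $\scrV\cap\pi(\scrV)$, $\scrV\cap\pi(\scrV^c)$, $\scrV^c\cap\pi(\scrV)$, $\scrV^c\cap\pi(\scrV^c)$ partition $V$, and the symmetric difference $\scrV\triangle\pi(\scrV)$ is the union of the two mixed pieces, so
$$
\Sigma_{\pi,\scrV}=\frac{|\scrV\cap\pi(\scrV)|+|\scrV^c\cap\pi(\scrV^c)|}{n}=1-\frac{|\scrV\triangle\pi(\scrV)|}{n}.
$$
In particular $0\le\Sigma_{\pi,\scrV}\le 1$, hence $|\Sigma_{\pi,\calA}-\Sigma_{\pi,\calB}|\le 1$.

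Next I would bound the variation of $\scrV\mapsto|\scrV\triangle\pi(\scrV)|$. The symmetric difference is a metric on the subsets of $V$, and $\pi$ preserves it (it is a bijection, so $|\pi(X)\triangle\pi(Y)|=|X\triangle Y|$). Writing $D:=|\calA\cap\calB^c|+|\calA^c\cap\calB|=|\calA\triangle\calB|$ and chaining $\calA\to\calB\to\pi(\calB)\to\pi(\calA)$,
$$
|\calA\triangle\pi(\calA)|\le|\calA\triangle\calB|+|\calB\triangle\pi(\calB)|+|\pi(\calB)\triangle\pi(\calA)|=|\calB\triangle\pi(\calB)|+2D,
$$
and the same inequality with $\calA,\calB$ swapped; hence $\bigl||\calA\triangle\pi(\calA)|-|\calB\triangle\pi(\calB)|\bigr|\le 2D$, i.e., $|\Sigma_{\pi,\calA}-\Sigma_{\pi,\calB}|\le 2D/n$.

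Finally, interpolating the two bounds, $|\Sigma_{\pi,\calA}-\Sigma_{\pi,\calB}|\le\min\{1,2D/n\}$, and since $\min\{1,t\}\le\sqrt t$ for all $t\ge 0$, choosing $t=2D/n$ gives $|\Sigma_{\pi,\calA}-\Sigma_{\pi,\calB}|\le\sqrt{2D/n}=\sqrt 2\,\sqrt{D/n}$, as claimed. I expect the subtle point to be the value of the constant. A direct analytic attack — writing $\Sigma_{\pi,\scrV}=\frac12+\frac1{2n}\langle\pi g_\scrV,g_\scrV\rangle$ with $g_\scrV:=1_\scrV-1_{\scrV^c}$ (which has norm $\sqrt n$ independently of $\scrV$), symmetrizing $\langle\pi g_\calA,g_\calA\rangle-\langle\pi g_\calB,g_\calB\rangle$ and applying Cauchy--Schwarz with the parallelogram identity $\|g_\calA+g_\calB\|_2^2+\|g_\calA-g_\calB\|_2^2=4n$ — does prove a bound of the right shape, but only with constant $2$ instead of $\sqrt 2$: it yields a square‑root‑type estimate $\lesssim\sqrt{D/n}$, and interpolating that against $1$ does not improve it. It is the \emph{linear} estimate $2D/n$ coming from the triangle inequality for symmetric difference, interpolated against $|\Sigma_{\pi,\calA}-\Sigma_{\pi,\calB}|\le 1$, that produces the sharp $\sqrt 2$; so the crux is establishing the identity $\Sigma_{\pi,\scrV}=1-|\scrV\triangle\pi(\scrV)|/n$, which simultaneously yields the trivial bound and makes the triangle‑inequality argument available.
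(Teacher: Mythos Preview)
Your argument is correct and takes a genuinely different route from the paper. The paper proceeds along the ``direct analytic attack'' you sketch and then set aside: it expands $\langle\pi 1_\calA,1_\calA\rangle+\langle\pi 1_{\calA^c},1_{\calA^c}\rangle$ via $1_\calA=1_\calB+1_{\calA\cap\calB^c}-1_{\calA^c\cap\calB}$, collects terms into a single inner product $\langle\pi(g_\calA+g_\calB),\,1_{\calA\cap\calB^c}-1_{\calA^c\cap\calB}\rangle$ with $g_\scrV:=1_\scrV-1_{\scrV^c}$, and applies Cauchy--Schwarz. As you anticipated, that route naturally delivers only the constant $2$: one has $\|g_\calA+g_\calB\|_2=2\sqrt{n-D}$ and $\|1_{\calA\cap\calB^c}-1_{\calA^c\cap\calB}\|_2=\sqrt D$, so Cauchy--Schwarz gives $2\sqrt{D(n-D)}/n\le 2\sqrt{D/n}$. (The paper's written argument in fact has slips here: the step collapsing $\langle\pi g_\calB,\cdot\rangle+\langle\pi(\cdot),g_\calA\rangle$ into $\langle\pi(g_\calA+g_\calB),\cdot\rangle$ tacitly uses $\langle\pi x,y\rangle=\langle\pi y,x\rangle$, which holds only when $P_\pi$ is self-adjoint, and the asserted equality $\|g_\calA+g_\calB\|_2\sqrt D=\sqrt{2n}\sqrt D$ fails for general $\calA,\calB$.) Your combinatorial argument---the identity $\Sigma_{\pi,\scrV}=1-|\scrV\triangle\pi(\scrV)|/n$, the triangle inequality for symmetric difference giving the \emph{linear} estimate $|\Sigma_{\pi,\calA}-\Sigma_{\pi,\calB}|\le 2D/n$, and the interpolation $\min\{1,t\}\le\sqrt t$---is more elementary, indifferent to whether $P_\pi$ is self-adjoint, and actually attains the stated constant $\sqrt 2$.
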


\begin{proof}
For any two subsets $A, B$ of $V$, note that 
$$1_A - 1_B  = 1_{A \cap B^c} - 1_{B \cap A^c}$$
holds, which shows that 
\begin{align*}
1_{\calA} & =  1_\calB+   1_{\calA}1_{\calB^c} - 1_{\calA^c}1_\calB ,\\
1_{\calA^c}  & =  1_{\calB^c} +  1_{\calA^c}1_{\calB} - 1_{\calA}1_{\calB^c} .
\end{align*}
Using this, we obtain 
\begin{align*}
& \langle \pi  1_{\calA}, 1_{\calA} \rangle + \langle \pi  1_{\calA^c}, 1_{\calA^c} \rangle\\
& = \langle \pi  (1_\calB+   1_{\calA}1_{\calB^c} - 1_{\calA^c}1_\calB ), 1_{\calA} \rangle  \\
& + \langle \pi  (1_{\calB^c} +  1_{\calA^c}1_{\calB} - 1_{\calA}1_{\calB^c}), 1_{\calA^c} \rangle\\
& = 
\langle \pi  1_\calB, 1_{\calA} \rangle  
+\langle \pi    1_{\calA}1_{\calB^c} , 1_{\calA} \rangle  
- \langle \pi   1_{\calA^c}1_\calB , 1_{\calA} \rangle  \\
& 
+\langle \pi  1_{\calB^c} , 1_{\calA^c} \rangle
+\langle \pi  1_{\calA^c}1_{\calB} , 1_{\calA^c} \rangle
-\langle \pi 1_{\calA}1_{\calB^c}, 1_{\calA^c} \rangle \\
& = 
\langle \pi  1_\calB, 1_\calB+   1_{\calA}1_{\calB^c} - 1_{\calA^c}1_\calB \rangle  
+\langle \pi    1_{\calA}1_{\calB^c} , 1_{\calA} \rangle  
- \langle \pi   1_{\calA^c}1_\calB , 1_{\calA} \rangle  \\
& 
+\langle \pi  1_{\calB^c} , 1_{\calB^c} +  1_{\calA^c}1_{\calB} - 1_{\calA}1_{\calB^c}  \rangle
+\langle \pi  1_{\calA^c}1_{\calB} , 1_{\calA^c} \rangle
-\langle \pi 1_{\calA}1_{\calB^c}, 1_{\calA^c} \rangle \\
& = 
\langle \pi  1_\calB, 1_\calB \rangle  
+\langle \pi  1_\calB,  1_{\calA}1_{\calB^c}  \rangle  
-\langle \pi  1_\calB, 1_{\calA^c}1_\calB \rangle  
+\langle \pi    1_{\calA}1_{\calB^c} , 1_{\calA} \rangle  
- \langle \pi 1_{\calA^c}1_\calB , 1_{\calA} \rangle  \\
& 
+
\langle \pi  1_{\calB^c} , 1_{\calB^c} \rangle
+\langle \pi  1_{\calB^c} , 1_{\calA^c}1_{\calB}\rangle
-\langle \pi  1_{\calB^c} , 1_{\calA}1_{\calB^c}  \rangle
+\langle \pi  1_{\calA^c}1_{\calB} , 1_{\calA^c} \rangle
-\langle \pi 1_{\calA}1_{\calB^c}, 1_{\calA^c} \rangle \\
& = 
\langle \pi  1_\calB, 1_\calB \rangle  
+\langle \pi  1_{\calB^c} , 1_{\calB^c} \rangle
\\
&+\langle \pi  1_\calB,  1_{\calA}1_{\calB^c}  \rangle  -\langle \pi  1_{\calB^c} , 1_{\calA}1_{\calB^c}  \rangle
+\langle \pi    1_{\calA}1_{\calB^c} , 1_{\calA} \rangle  -\langle \pi 1_{\calA}1_{\calB^c}, 1_{\calA^c} \rangle 
 \\
& 
+\langle \pi  1_{\calB^c} , 1_{\calA^c}1_{\calB}\rangle -\langle \pi  1_\calB, 1_{\calA^c}1_\calB \rangle  
+\langle \pi  1_{\calA^c}1_{\calB} , 1_{\calA^c} \rangle - \langle \pi  1_{\calA^c}1_\calB , 1_{\calA} \rangle 
  \\
& = 
\langle \pi  1_\calB, 1_\calB \rangle  
+\langle \pi  1_{\calB^c} , 1_{\calB^c} \rangle
\\
&+\langle \pi  (1_\calB-1_{\calB^c} ),  1_{\calA}1_{\calB^c}  \rangle  
+\langle \pi    1_{\calA}1_{\calB^c} , 1_{\calA} -1_{\calA^c} \rangle 
-\langle \pi (1_\calB - 1_{\calB^c} ), 1_{\calA^c}1_\calB \rangle  
-\langle \pi  1_{\calA^c}1_{\calB} , 1_{\calA} -1_{\calA^c}  \rangle 
  \\
& = 
\langle \pi  1_\calB, 1_\calB \rangle  
+\langle \pi  1_{\calB^c} , 1_{\calB^c} \rangle
\\
&+\langle \pi  (1_\calB-1_{\calB^c} ),  1_{\calA}1_{\calB^c}  \rangle  
-\langle \pi (1_\calB - 1_{\calB^c} ), 1_{\calA^c}1_\calB \rangle  
+\langle \pi    1_{\calA}1_{\calB^c} , 1_{\calA} -1_{\calA^c} \rangle
-\langle \pi  1_{\calA^c}1_{\calB} , 1_{\calA} -1_{\calA^c}  \rangle 
  \\
& = 
\langle \pi  1_\calB, 1_\calB \rangle  
+\langle \pi  1_{\calB^c} , 1_{\calB^c} \rangle
\\
&+\langle \pi  (1_\calB-1_{\calB^c} ),  1_{\calA}1_{\calB^c}  -1_{\calA^c}1_\calB \rangle  
+\langle \pi    (1_{\calA}1_{\calB^c} -1_{\calA^c}1_{\calB} ), 1_{\calA} -1_{\calA^c}  \rangle 
  \\
& = 
\langle \pi  1_\calB, 1_\calB \rangle  
+\langle \pi  1_{\calB^c} , 1_{\calB^c} \rangle
\\
&+\langle \pi  (1_\calB-1_{\calB^c} +1_{\calA} -1_{\calA^c}),  1_{\calA}1_{\calB^c}  -1_{\calA^c}1_\calB \rangle  
\end{align*}
hold, which implies that 
\begin{align*}
& \langle \pi  1_{\calA}, 1_{\calA} \rangle + \langle \pi  1_{\calA^c}, 1_{\calA^c} \rangle
- 
\langle \pi  1_\calB, 1_\calB \rangle  
-\langle \pi  1_{\calB^c} , 1_{\calB^c} \rangle
\\
&=\langle \pi  (1_\calB-1_{\calB^c} +1_{\calA} -1_{\calA^c}),  1_{\calA}1_{\calB^c}  -1_{\calA^c}1_\calB \rangle  .
\end{align*}
Noting that 
\begin{align*}
& |\langle \pi  (1_\calB-1_{\calB^c} +1_{\calA} -1_{\calA^c}),  1_{\calA}1_{\calB^c}  -1_{\calA^c}1_\calB \rangle  | \\
& \leq \|1_\calB-1_{\calB^c} +1_{\calA} -1_{\calA^c}\|_2 \| 1_{\calA}1_{\calB^c}  -1_{\calA^c}1_\calB   \|_2\\
& = \sqrt{2n}\sqrt{ |\calA\cap \calB^c| + |\calA^c\cap \calB|},
\end{align*}
the result follows. 
\end{proof}

\section{Dichotomy yields an index two subgroup}

\begin{condition}
\label{Condition22}

Let $\calG$ be a group acting transitively on $V$, and assume that no index two subgroup of $\calG$ acts transitively on $V$. 
\end{condition}

Henceforth, we assume that \cref{Condition22} holds.

\begin{proposition}
[Dichotomy provides an index two subgroup]
\label{Lemma:SubgrpOfIndex2}
Suppose $\varkappa$ lies in $[0,1/260]$ and $\xi \leq \frac 45$. 
Assume that 
$$
\langle f_+, (\tau \cdot f)_+\rangle 
\in 
(\delta \| f_+\|_2^2 , (1-\delta)  \| f_+\|_2^2)
$$
holds for no $\tau \in \calG$. 
Then 
$$H := \{\tau \in \calG \,|\, 
\langle f_+, (\tau \cdot f)_+\rangle 
\geq 
(1-\delta) \| f_+\|_2^2
\}$$
is a subgroup of $\calG$ of index two. 
\end{proposition}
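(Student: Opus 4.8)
The strategy is the classical Fre\u{\i}man "pivoting" argument, now applied to the correlation function $\tau \mapsto \langle f_+, (\tau\cdot f)_+\rangle$ rather than to sizes of sumsets. By hypothesis this function takes no value in the open interval $(\delta\|f_+\|_2^2,(1-\delta)\|f_+\|_2^2)$, so $\calG$ splits as a disjoint union $H \sqcup H'$, where $H$ is the "high-correlation" set in the statement and $H' = \{\tau : \langle f_+,(\tau\cdot f)_+\rangle \le \delta\|f_+\|_2^2\}$. Note $e \in H$ since $\langle f_+, f_+\rangle = \|f_+\|_2^2$, and $H$ is visibly symmetric ($\tau \in H \iff \tau^{-1} \in H$, because the action is by orthogonal operators so $\langle f_+,(\tau\cdot f)_+\rangle = \langle (\tau^{-1}\cdot f)_+, f_+\rangle$). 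The substantive claims are: (i) $H$ is closed under multiplication, hence a subgroup; and (ii) $[\calG:H] = 2$, which by \cref{Condition22} amounts to showing $H \neq \calG$, i.e. $H' \neq \emptyset$, together with showing $H' H' \subseteq H$ and $H H' \subseteq H'$ so that $H'$ is a single coset.

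First I would set up the quantitative dictionary. Using \cref{Lemma:InnerProdOfPlusPartWithPlusPartOfATranslate} with $\pi$ the permutation of $V$ given by the action of $\tau$, the correlation $\langle f_+,(\tau\cdot f)_+\rangle$ is pinned to $\frac{|\calI_{\tau, f>0}|}{n}\|f\|_2^2$ up to an additive error of order $(\tfrac{\sqrt{|\calI|}}{\sqrt n}\sqrt\varkappa + \varkappa)\|f\|_2^2$; since $\varkappa \le 1/260$ this error is small. Combined with the lower bound $\|f_+\|_2 \ge \vartheta\|f\|_2$ from \cref{Lemma:ControllingL2NormFromBelow} and the upper bound $\|f_+\|_2^2 \le \frac{1+\varsigma+\varkappa+2\sqrt2\sqrt\varkappa}{2}\|f\|_2^2$ from \cref{Lemma:DiffOfL2NormOFf+f-}, membership $\tau \in H$ translates into the overlap $|\supp(f>0)\cap\tau(\supp(f>0))| = |\calI_{\tau,f>0}|$ being a definite fraction of $n$ close to $\|f_+\|_2^2/\|f\|_2^2 \cdot n$, while $\tau \in H'$ forces this overlap to be small. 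Writing $S = \supp(f>0)$ and $p = |S|/n$, the dichotomy says the "overlap density" $|S \cap \tau S|/n$ is always either $\ge$ (something near $p$) or $\le$ (something near $0$), with a genuine gap between the two regimes guaranteed by the choice of $\delta$ and the constraint $\varkappa \le 1/260$, $\xi \le 4/5$. This is where I expect to lean on the inclusion–exclusion/counting identity $|S \cap \sigma\tau S| \ge |S\cap\sigma S| + |\sigma S \cap \sigma\tau S| - |\sigma S| = |S\cap\sigma S| + |S\cap\tau S| - |S|$ (using that $\sigma$ is a bijection), i.e. the overlap densities satisfy a "triangle-type" inequality.

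Given this, closure of $H$ is a union bound: if $\sigma,\tau \in H$ then $|S\cap\sigma S|/n$ and $|S\cap\tau S|/n$ are both close to $p$, so the displayed inequality gives $|S\cap\sigma\tau S|/n \ge$ (close to $p$) $-\,(1-p)$ minus error $ = $ (close to $2p-1$); since $p > 1/2$ (from $|\supp(f_+)| \ge |\supp(f_-)|$, \cref{EigenFunction}(3), or directly from the support bounds in \cref{Lemma:DiffOfL2NormOFf+f-}) this is bounded below by a positive quantity, and — crucially — by one that lands strictly above the $\delta\|f_+\|_2^2$ threshold in the "high" regime, so $\sigma\tau \notin H'$, forcing $\sigma\tau \in H$. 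The same inequality applied with one or both of $\sigma,\tau$ in $H'$ shows $HH' \subseteq H'$ and $H'H' \subseteq H$ (here one also uses the complementary overlap estimate for $S^c$, or equivalently $\Sigma_{\tau,S}$-type quantities via \cref{Lemma:SumA11A22}, to control $|S \cap \tau S|$ from above when $\tau \in H'$). Finally $H' \neq \emptyset$: if $H = \calG$ then $\langle f_+,(\tau\cdot f)_+\rangle \ge (1-\delta)\|f_+\|_2^2$ for every $\tau$, which by the overlap dictionary forces $|S \cap \tau S|$ to be nearly all of $S$ for all $\tau$; averaging over $\tau$ (the action is transitive) then forces $|S|$ to be nearly $n$, contradicting $|\supp(f_-)| \ge \frac{1-\varsigma}{2}n > 0$ from \cref{Lemma:DiffOfL2NormOFf+f-} once $\varkappa$ is small. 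Hence $H \neq \calG$, $\calG = H \sqcup H'$ with $H$ a subgroup and $H'$ a coset, so $[\calG:H] = 2$. The main obstacle is purely the bookkeeping in the previous paragraph: one must verify that with $\varkappa \le 1/260$ and $\xi \le 4/5$ the numerical gap between "$\ge (1-\delta)\|f_+\|_2^2$" and "$\le \delta\|f_+\|_2^2$", after translating through all the additive $\sqrt\varkappa$-errors and the triangle inequality, is strictly positive — i.e. that the two regimes cannot collide — which is exactly what the somewhat mysterious formula for $\delta$ (in terms of $\varsigma$, $\vartheta$, $\xi$) is engineered to ensure.
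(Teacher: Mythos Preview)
Your overall Fre\u{\i}man-pivoting plan is correct in spirit, and your argument for $H\neq\calG$ (average the correlations over $\calG$ and compare with the support bound from \cref{Lemma:DiffOfL2NormOFf+f-}) is essentially what the paper does. However, the execution you propose for the two remaining steps is substantially more circuitous than the paper's, and you have not actually checked that the numerics close.

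\textbf{Closure of $H$.} You translate membership in $H$ into a statement about the overlap $|\supp(f_+)\cap\tau(\supp(f_+))|$ via \cref{Lemma:InnerProdOfPlusPartWithPlusPartOfATranslate}, then use the set-theoretic triangle inequality $|S\cap\sigma\tau S|\ge |S\cap\sigma S|+|S\cap\tau S|-|S|$. This introduces two layers of $\sqrt\varkappa$-errors (one each way through the dictionary) plus the loss $|S|-|S\cap\tau S|$, and one would have to verify that all of this still lands above the $\delta\|f_+\|_2^2$ threshold. The paper bypasses this entirely by working directly in $\ell^2(V)$: expanding
\[
\langle f_+-(\tau_1\cdot f)_+,\ (\tau_1\tau_2\cdot f)_+-(\tau_1\cdot f)_+\rangle
\]
and bounding it by Cauchy--Schwarz gives
$\langle f_+,(\tau_1\tau_2\cdot f)_+\rangle \ge (1-4\delta)\|f_+\|_2^2 > \delta\|f_+\|_2^2$
in one line, using only that $\|(\tau\cdot f)_+\|_2=\|f_+\|_2$. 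No passage to overlap sets is needed, and the condition $\delta<1/5$ (guaranteed by $\xi\le 4/5$) suffices.

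\textbf{Index two.} You propose to check the full coset structure $HH'\subseteq H'$, $H'H'\subseteq H$ directly, again via the overlap dictionary; you note correctly that this requires a complementary estimate (essentially that $\tau\in H'$ forces $\tau S$ mostly into $S^c$), but you do not carry it out. The paper does something much cheaper: once $H$ is known to be a proper subgroup, it uses the averaging identity
\[
\tfrac1t\sum_{\tau\in\calG}\langle f_+,(\tau\cdot f)_+\rangle=\Bigl(\sum_{x\in\supp(f_+)}f(x)\Bigr)^2
\]
together with the lower bound on $\sum_{x}f_+(x)$ from \cref{Lemma:fsmlL1bdd} and the upper bound on $\|f_+\|_2^2$ from \cref{Lemma:DiffOfL2NormOFf+f-} to show $|H|/|\calG|>1/3$. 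Lagrange's theorem then forces $[\calG:H]=2$. This is exactly where the formula for $\delta$ is tuned: the inequality $\delta<\frac12\bigl(\frac{3(1-\varkappa)}{1+\varsigma+\varkappa+2\sqrt2\sqrt\varkappa}(\frac1{\sqrt2}-\varsigma)^2-1\bigr)$ is precisely what makes the density bound exceed $1/3$.

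In short: your route may be salvageable, but the paper's direct Cauchy--Schwarz closure plus the density/Lagrange argument avoids the overlap dictionary altogether at this stage and makes the dependence on the constants transparent.
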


\begin{proof}
Note that 
\begin{align*}
\langle f_+, (\tau \cdot f)_+\rangle 
& = \langle f1_{f>0}, (\tau \cdot f) 1_{\tau \cdot f > 0}\rangle \\
& = \sum_{x\in \supp(f_+) \cap \tau \cdot \supp(f_+)} 
f(x)  (\tau \cdot f)(x) .
\end{align*}
First, we show that $H \neq \calG$. 
For any $\tau \in \calG$, consider the map 
$$\supp(f_+) \cap \tau \cdot \supp(f_+) \to \supp(f_+) \times \supp(f_+) , 
\quad 
x \mapsto (x, \tau^\mo \cdot x)
.$$
This induces a $t:1$ map 
$$
\coprod_{\tau \in \calG } \supp(f_+) \cap \tau \cdot \supp(f_+) \to \supp(f_+) \times \supp(f_+) 
,$$
where $t$ denotes the size of the stablizer of any point of $V$ under the transitive action of $\calG$. This shows that 
$$
\frac 1t
\sum_{\tau\in \calG}
\sum_{x\in \supp(f_+) \cap \tau \cdot \supp(f_+)}
f(x) f(\tau^\mo x)
= 
\sum_{x, y \in \supp(f_+) } f(x) f(y) 
,$$
which yields 
$$
\frac 1t
\sum_{\tau\in \calG}
\langle f_+, (\tau \cdot f)_+\rangle 
= 
\left(\sum_{x \in \supp(f_+) } f(x) \right)^2 
.$$
If $H = \calG$, then 
\begin{align*}
(1-\delta)  \| f_+\|_2^2|\calG|
& \leq 
\sum_{\tau\in \calG}
\sum_{x\in \supp(f_+) \cap \tau \cdot \supp(f_+)}
f(x) f(\tau^\mo x) \\
& = t \langle f_+ , 1_{f>0}\rangle^2 \\
& \leq t\|f_+\|_2^2 |\supp(f_+)|\\
&\leq t \|f_+\|_2^2 \frac {1+ \varsigma} 2 |V|
\quad 
\text{(using \cref{Lemma:DiffOfL2NormOFf+f-})}\\
&\leq  \|f_+\|_2^2 \frac {1+ \varsigma} 2 |\calG|,
\end{align*}
which implies 
$$
\delta 
\geq \frac {1- \varsigma} 2.$$
Since 
$$
\delta < 
\frac 12 
\left(
\frac {3\left(1 - \varkappa\right)} {1  + \varsigma + \varkappa + 2 \sqrt 2 \sqrt{\varkappa}}
\left(
\frac 1{\sqrt 2}
-
\varsigma 
\right) ^2 
-1
\right)
< \frac{1-\varsigma}2
\quad 
\text{ for } \varkappa \in (0, \tfrac 7{10}) 
$$
and $\varkappa \leq \frac 1{260}$, we conclude that $H \neq \calG$. 
Now, we prove that $H$ forms a subgroup of $\calG$. Note that $H$ contains the identity element of $\calG$. 
Let $\tau_1, \tau_2$ be elements of $H$. 
Note that 
\begin{align*}
& \langle f_+ - (\tau_1 \cdot f)_+,   (\tau_1 \tau_2 \cdot f)_+ -  (\tau_1 \cdot f)_+ \rangle \\
& = \langle f_+ ,   (\tau_1 \tau_2 \cdot f)_+ -  (\tau_1 \cdot f)_+ \rangle 
- \langle (\tau_1 \cdot f)_+,   (\tau_1 \tau_2 \cdot f)_+ -  (\tau_1 \cdot f)_+ \rangle \\
& = \langle f_+ ,   (\tau_1 \tau_2 \cdot f)_+ \rangle 
- \langle f_+ ,  (\tau_1 \cdot f)_+ \rangle 
- \langle (\tau_1 \cdot f)_+,   (\tau_1 \tau_2 \cdot f)_+  \rangle
+\langle (\tau_1 \cdot f)_+,    (\tau_1 \cdot f)_+ \rangle \\
& = \langle f_+ ,   (\tau_1 \tau_2 \cdot f)_+ \rangle 
- \langle f_+ ,  (\tau_1 \cdot f)_+ \rangle 
- \langle f_+ ,  (\tau_2 \cdot f)_+ \rangle 
+\| f_+\|_2^2
\end{align*}
hold. Also note that 
\begin{align*}
& |\langle f_+ - (\tau_1 \cdot f)_+,   (\tau_1 \tau_2 \cdot f)_+ -  (\tau_1 \cdot f)_+ \rangle |^2\\
& \leq \| f_+ - (\tau_1 \cdot f)_+\|_2^2 \|  (\tau_1 \tau_2 \cdot f)_+ -  (\tau_1 \cdot f)_+ \|_2^2 \\
& =  
(\|f_+\|_2^2 + \|(\tau_1 \cdot f)_+\|_2^2 
- 2\langle f_+ ,  (\tau_1 \cdot f)_+\|_2^2 \rangle ) \\
& \qquad (\|  (\tau_1 \tau_2 \cdot f)_+ \|_2^2 + |(\tau_1 \cdot f)_+ \|_2^2
- 2 \langle (\tau_1 \tau_2 \cdot f)_+ , (\tau_1 \cdot f)_+ \rangle   )\\
& 
= 
4
(\|f_+\|_2^2 - \langle f_+, (\tau_1\cdot f)_+\rangle ) 
(\|f_+\|_2^2 - \langle f_+, (\tau_2 \cdot f)_+\rangle ) 
\\
& \leq 4 \delta^2 \|f_+\|_2^4.
\end{align*}
This shows that 
\begin{align*}
& \langle f_+, (\tau_1 \tau_2 \cdot f)_+\rangle \\
& = 
 \langle f_+, (\tau_1  \cdot f)_+\rangle 
+ \langle f_+, ( \tau_2 \cdot f)_+\rangle 
- \|f_+\|_2^2 +
\langle f_+ - (\tau_1 \cdot f)_+,   (\tau_1 \tau_2 \cdot f)_+ -  (\tau_1 \cdot f)_+ \rangle\\
& \geq 
( 1- \delta + 1- \delta -1 - 2\delta)  \|f_+\|_2^2 \\
& = (1 -4\delta)  \|f_+\|_2^2 \\
& > \delta  \|f_+\|_2^2 
\quad 
\text{(since $\xi \leq \frac 45$ and $\varkappa >0$)} .
\end{align*}
Since 
$$
\langle f_+, (\tau \cdot f)_+\rangle 
\in 
(\delta \| f_+\|_2^2, (1-\delta) \| f_+\|_2^2 )
$$
holds for no $\tau \in \calG$, 
it follows that 
$$
\langle f_+, (\tau_1 \tau_2 \cdot f)_+\rangle
\geq 
(1- \delta)  \|f_+\|_2^2 .$$
Hence, $H$ is a proper subgroup of $\calG$. We claim that $H$ has index two in $\calG$. 
Note that 
\begin{align*}
& \left|\sum_{x \in \supp(f_+) } f(x) \right| \\
& \geq \|f_\str\|_2 \frac{|\supp(f_+) |}{\sqrt n} 
-
\varsigma \|f_\str\|_2 \sqrt{|\supp(f_+) |} 
\quad 
\text{(using \cref{Lemma:fsmlL1bdd})}\\
& =
\|f_\str\|_2 \sqrt{|\supp(f_+) |}
\left(
\frac{\sqrt{|\supp(f_+) |}}{\sqrt n} 
-
\varsigma
\right)
\\
& \geq 
\|f_\str\|_2 
\sqrt{|\supp(f_+) |}
\left(
\frac 1{\sqrt 2} 
-
\varsigma 
\right) \\
& \geq 
\left(\frac {2(1 - \varkappa)} {1  + \varsigma + \varkappa + 2 \sqrt 2 \sqrt{\varkappa}}\right)^{1/2}
  \|f_+\|_2 
  \frac {\sqrt {|V|}}{\sqrt 2}
\left(
\frac 1{\sqrt 2}
-
\varsigma 
\right) 
\quad 
\text{(using \cref{Lemma:DecompositionStrSml}, \cref{Lemma:DiffOfL2NormOFf+f-} and $\varsigma \leq \tfrac 1 {\sqrt 2}$)}\\
\end{align*}
Using $\varsigma \leq \frac 1 {\sqrt 2}$, it follows that 
\begin{align*}
\frac {2(1 - \varkappa)} {1  + \varsigma + \varkappa + 2 \sqrt 2 \sqrt{\varkappa}}
  \|f_+\|_2 ^2 
\frac {|\calG|}2
\left(
\frac 1{\sqrt 2}
-
\varsigma 
\right) ^2
& \leq t^2 \left(\sum_{x \in \supp(f_+) } f(x) \right)^2 \\
& = \sum_{\tau\in \calG} \langle f_+, (\tau \cdot f)_+\rangle  \\
& = 
\sum_{\tau\in H } \langle f_+, (\tau \cdot f)_+\rangle 
+ \sum_{\tau\in \calG \setminus H } \langle f_+, (\tau \cdot f)_+\rangle \\
& \leq 
\sum_{\tau\in H } \langle f_+, (\tau \cdot f)_+\rangle 
+ \delta (|\calG| - |H|) \|f_+\|_2^2 \\
& \leq 
\sum_{\tau\in H } \|f_+\|_2 \|(\tau \cdot f)_+\|_2 
+ \delta (|\calG| - |H|) \|f_+\|_2^2 \\
& = \sum_{\tau\in H } \|f_+\|_2^2 + \delta (|\calG| - |H|) \|f_+\|_2^2 \\
& = |H| \|f_+\|_2^2 + \delta (|\calG| - |H|) \|f_+\|_2^2 ,
\end{align*}
which gives 
\begin{align*}
\frac {1 - \varkappa} {1  + \varsigma + \varkappa + 2 \sqrt 2 \sqrt{\varkappa}}
\left(
\frac 1{\sqrt 2}
-
\varsigma 
\right) ^2 
& \leq (1- \delta) \frac{|H|}{|\calG|}  + \delta ,
\end{align*}
and hence 
$$
\frac{|H|}{|\calG|}  
\geq 
\frac {
\frac {1 - \varkappa} {1  + \varsigma + \varkappa + 2 \sqrt 2 \sqrt{\varkappa}}
\left(
\frac 1{\sqrt 2}
-
\varsigma 
\right) ^2 
- \delta }{1 - \delta }.
$$
Using 
$$
\delta 
< 
\frac 12 
\left(
\frac {3\left(1 - \varkappa\right)} {1  + \varsigma + \varkappa + 2 \sqrt 2 \sqrt{\varkappa}}
\left(
\frac 1{\sqrt 2}
-
\varsigma 
\right) ^2 
-1
\right),$$
we obtain 
$\frac{|H|}{|\calG|}  > \frac 13.$
Since $H\neq \calG$, it follows that $H$ has index two in $\calG$. 
This completes the proof of the lemma. 
\end{proof}

\begin{lemma}
\label{Lemma:Suppf+EqualsCalO}
Suppose $\varkappa$ lies in $[0,1/260]$ and $\xi \leq \frac 45$. 
Assume that 
$$
\langle f_+, (\tau \cdot f)_+\rangle 
\in 
(\delta \| f_+\|_2^2 , (1-\delta)  \| f_+\|_2^2)
$$
holds for no $\tau \in \calG$. 
Then for some orbit $\calO$ of the action of 
$$H = \{\tau \in \calG \,|\, 
\langle f_+, (\tau \cdot f)_+\rangle 
\geq 
(1-\delta) \| f_+\|_2^2
\}$$
on $V$, 
the bound 
\begin{align}
\label{Eqn:Suppf+EqualsCalO}
\frac{|\supp(f_+) \cap \calO^c|}{n}
& \leq \frac \varsigma {\sqrt 2} 
+ \frac {\sqrt \delta} 2 
\sqrt{\frac {1  + \varsigma + \varkappa + 2 \sqrt 2 \sqrt{\varkappa}} {2(1 - \varkappa)} }
\end{align}
holds. 
\end{lemma}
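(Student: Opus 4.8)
The plan is to feed the index-two subgroup $H$ produced by \cref{Lemma:SubgrpOfIndex2} into the orbit-counting identity used in the proof of that proposition, restricted this time to the nontrivial coset $\calG\setminus H$, and then to convert the resulting smallness of a correlation sum into concentration of $\supp(f_+)$ on a single $H$-orbit via \cref{Lemma:fsmlL1bdd}.

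\textbf{Orbit structure.} By \cref{Lemma:SubgrpOfIndex2}, $H$ is a subgroup of $\calG$ of index two, and by \cref{Condition22} the index-two subgroup $H$ does not act transitively on $V$. Since $[\calG : H] = 2$, the stabilizer $\calG_y$ of any $y \in V$ must lie in $H$ — otherwise $\calG = H\calG_y$ and $H$ would act transitively — so $\calG_y = H_y$ has the same order $t := |\calG|/n$ for the restricted action. Hence $V$ is the disjoint union of exactly two $H$-orbits $\calO_1, \calO_2$, each of size $n/2$, which are interchanged by every element of $\calG \setminus H$. Write $S := \supp(f_+)$, $S_i := S \cap \calO_i$, and $\sigma_i := \sum_{x \in S_i} f(x)$; since $f > 0$ on $S$, we have $\sigma_i \geq 0$.

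\textbf{Bounding the cross sum.} For $\tau \in \calG \setminus H$ we have $\langle f_+, (\tau \cdot f)_+ \rangle = \sum_{x \in S \cap \tau(S)} f(x) f(\tau^{-1} x)$, and since $\tau$ swaps $\calO_1$ with $\calO_2$ the pairs $(x, \tau^{-1} x)$ that occur lie in $(S_1 \times S_2) \cup (S_2 \times S_1)$. As $\calG_y \subseteq H$, for $y$ and $x$ in different $H$-orbits all $t$ group elements sending $y$ to $x$ lie in $\calG \setminus H$, so counting fibres with multiplicity $t$ gives $\frac 1t \sum_{\tau \in \calG \setminus H} \langle f_+, (\tau \cdot f)_+ \rangle = 2\sigma_1 \sigma_2$ (equivalently, subtract the identity $\frac 1t \sum_{\tau \in H} \langle f_+, (\tau \cdot f)_+ \rangle = \sigma_1^2 + \sigma_2^2$ from the identity $\frac 1t \sum_{\tau \in \calG} \langle f_+, (\tau \cdot f)_+ \rangle = (\sigma_1 + \sigma_2)^2$ established in the proof of \cref{Lemma:SubgrpOfIndex2}). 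By the dichotomy hypothesis and the definition of $H$, every $\tau \notin H$ satisfies $\langle f_+, (\tau \cdot f)_+ \rangle \leq \delta \|f_+\|_2^2$, and there are $|\calG| - |H| = |H|$ such $\tau$ with $|H|/t = n/2$; therefore $2\sigma_1 \sigma_2 \leq \frac{\delta n}{2} \|f_+\|_2^2$, whence $\min(\sigma_1, \sigma_2) \leq \frac{\sqrt{\delta n}}{2} \|f_+\|_2$.

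\textbf{Concentration.} Take $\calO$ to be the orbit among $\calO_1, \calO_2$ with the larger value of $\sum_{x \in S \cap \calO} f(x)$, so that $\sum_{x \in S \cap \calO^c} f(x) = \min(\sigma_1, \sigma_2) \leq \frac{\sqrt{\delta n}}{2} \|f_+\|_2$. Apply \cref{Lemma:fsmlL1bdd} with $X = \calO^c$ and set $\lambda := \sqrt{|S \cap \calO^c|/n}$; its left-hand side is at least $\sqrt n\, \|f_\str\|_2\, \lambda(\lambda - \varsigma)$, so $\|f_\str\|_2\, \lambda(\lambda - \varsigma) \leq \frac{\sqrt \delta}{2} \|f_+\|_2$, i.e. $\lambda^2 \leq \varsigma \lambda + \frac{\sqrt\delta}{2} \frac{\|f_+\|_2}{\|f_\str\|_2}$. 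Since $S \cap \calO^c \subseteq \calO^c$ has size at most $n/2$, we have $\lambda \leq 1/\sqrt 2$; and since $\varkappa \leq \frac 1{260} \leq \frac 15$, \cref{Lemma:DiffOfL2NormOFf+f-} together with \cref{Lemma:DecompositionStrSml} gives $\frac{\|f_+\|_2^2}{\|f_\str\|_2^2} \leq \frac{1 + \varsigma + \varkappa + 2\sqrt 2 \sqrt \varkappa}{2(1 - \varkappa)}$. Substituting $\lambda \leq 1/\sqrt 2$ and this ratio bound into $\lambda^2 \leq \varsigma\lambda + \frac{\sqrt\delta}{2}\frac{\|f_+\|_2}{\|f_\str\|_2}$, and noting $\lambda^2 = |\supp(f_+) \cap \calO^c|/n$, yields exactly \eqref{Eqn:Suppf+EqualsCalO}. (The hypotheses $\varkappa \leq \frac1{260}$ and $\xi \leq \frac45$ also guarantee $\varsigma \leq \frac 1{\sqrt 2}$ and $\delta \geq 0$, so all invoked lemmas apply.)

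\textbf{Main obstacle.} The crux is the computation $\frac 1t \sum_{\tau \in \calG \setminus H} \langle f_+, (\tau \cdot f)_+ \rangle = 2\sigma_1\sigma_2$, which rests on the structural point that $\calG_y \subseteq H$ (so that the fibre-counting multiplicity over $\calG \setminus H$ is still $t$) and on identifying $\calO_1^c = \calO_2$; once this is in place, the rest is bookkeeping with the norm inequalities from the earlier lemmas.
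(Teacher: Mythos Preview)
Your proof is correct and follows essentially the same route as the paper's: invoke \cref{Lemma:SubgrpOfIndex2} to obtain the index-two subgroup $H$, use \cref{Condition22} to split $V$ into two $H$-orbits of size $n/2$, compute $\frac1t\sum_{\tau\in\calG\setminus H}\langle f_+,(\tau\cdot f)_+\rangle=2\sigma_1\sigma_2$ via the same fibre-counting argument (your observation that $\calG_y\subseteq H$ is exactly what makes the multiplicity $t$), bound this by $\frac{\delta n}{2}\|f_+\|_2^2$, and then feed the resulting bound $\min(\sigma_1,\sigma_2)\leq\frac{\sqrt{\delta n}}{2}\|f_+\|_2$ into \cref{Lemma:fsmlL1bdd} together with \cref{Lemma:DiffOfL2NormOFf+f-}, \cref{Lemma:DecompositionStrSml}, and $|S\cap\calO^c|\leq n/2$. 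The paper's proof is organized identically; your version is slightly more explicit about the orbit structure and introduces the shorthand $\lambda,\sigma_i$, but the mathematics is the same.
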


\begin{proof}
By \cref{Lemma:SubgrpOfIndex2}, $H$ is a subgroup of $\calG$ of index two. 
By \cref{Condition22}, under the action of $H$, the set $V$ decomposes into two distinct orbits, denoted by $\calO_1, \calO_2$. 
For any subsets $A, B$ of $V$ contained in $\calO_1, \calO_2$ (in some order) and an element $\tau \in \calG \setminus H$, 
consider the map 
$$A \cap \tau \cdot B
\to A \times B, 
\quad 
x\mapsto (x, \tau^\mo \cdot x)
.
$$
This induces a $t:1$ map 
$$
\coprod_{\tau \in H^c}
A \cap \tau B
\to 
A \times B
, \quad 
x = x_\tau \mapsto (x, \tau^\mo \cdot x) ,
$$
where $t$ denotes the size of the stablizer of any point of $V$ under the transitive action of $\calG$. 
This shows that 
$$
\sum_{a\in A, b\in B} f(a) f(b) 
= \frac 1t
\sum_{\tau \in H^c }\sum_{x\in A \cap \tau  B}f(x) (\tau \cdot f) (x) 
.$$
We obtain 
$$
\sum_{a\in \supp(f_+) \cap \calO_1, b\in \supp(f_+) \cap \calO_2} f(a) f(b) 
= \frac 1t
\sum_{\tau \in H^c }\sum_{x\in (\supp(f_+) \cap \calO_1)  \cap \tau  (\supp(f_+) \cap \calO_2)}f(x) (\tau \cdot f) (x) 
,
$$
$$
\sum_{a\in \supp(f_+) \cap \calO_2, b\in \supp(f_+) \cap \calO_1} f(a) f(b) 
= \frac 1t
\sum_{\tau \in H^c }\sum_{x\in (\supp(f_+) \cap \calO_2)  \cap \tau  (\supp(f_+) \cap \calO_1)}f(x) (\tau \cdot f) (x) 
.$$
Combining the above yields 
\begin{align*}
& 2\sum_{a\in \supp(f_+) \cap \calO_1, b\in \supp(f_+) \cap \calO_2} f(a) f(b) \\
& = \frac 1t
\sum_{\tau \in H^c }
\left(
\sum_{x\in (\supp(f_+) \cap \calO_1)  \cap \tau  (\supp(f_+) \cap \calO_2)}
+
\sum_{x\in (\supp(f_+) \cap \calO_2)  \cap \tau  (\supp(f_+) \cap \calO_1)}f(x) (\tau \cdot f) (x) 
\right) \\
& = \frac 1t 
\sum_{\tau \in H^c }\sum_{x\in \supp(f_+) \cap \tau \supp(f_+)}f(x) (\tau \cdot f) (x) \\
& = \frac 1t \sum_{\tau \in H^c } \langle f_+, (\tau \cdot f)_+\rangle \\
& \leq \frac \delta t |H|  \|f_+\|_2^2.
\end{align*}
It follows that 
\begin{align*}
\langle f_+ ,1_{\calO_1}\rangle 
\langle f_+ ,1_{\calO_2}  \rangle 
& = \langle f_+ 1_{\calO_1}, 1_V \rangle 
\langle f_+ 1_{\calO_2}, 1_V \rangle \\
& = 
\sum_{a\in \supp(f_+) \cap \calO_1} f(a) 
\times 
\sum_{b\in \supp(f_+) \cap \calO_2} f(b) \\
& = 
\sum_{a\in \supp(f_+) \cap \calO_1, b\in \supp(f_+) \cap \calO_2} f(a) f(b) \\
& \leq 
\frac 14\delta |V|  \|f_+\|_2^2.
\end{align*}
Hence, for some orbit $\calO$ of the action of $H$ on $V$, 
$$
\langle f_+ ,1_{\calO^c}\rangle 
\leq 
\sqrt{\frac \delta 4 |V| }  \|f_+\|_2
.$$
Note that 
\begin{align*}
& \|f_\str\|_2\frac{|\supp(f_+) \cap \calO^c|}{\sqrt n} 
-
\varsigma \|f_\str\|_2 \sqrt{|\supp(f_+) \cap \calO^c|} \\
& \leq \left|\sum_{x \in \supp(f_+) \cap \calO^c} f(x) \right| 
\quad 
\text{(using \cref{Lemma:fsmlL1bdd})}\\
& = \langle f_+ ,1_{\calO^c}\rangle \\
& \leq \sqrt{\frac \delta 4 |V| }  \|f_+\|_2 \\
& \leq \sqrt{\frac \delta 4 |V| }  
\sqrt{\frac {1  + \varsigma + \varkappa + 2 \sqrt 2 \sqrt{\varkappa}} 2 }\|f\|_2 \\
& \leq \sqrt{\frac \delta 4 |V| }  
\sqrt{\frac {1  + \varsigma + \varkappa + 2 \sqrt 2 \sqrt{\varkappa}} 2 }
\frac 1{\sqrt{1- \varkappa}}\|f_\str\|_2 ,
\end{align*}
which implies that 
\begin{align*}
\frac{|\supp(f_+) \cap \calO^c|}{n}
& \leq 
\varsigma  
\frac{ \sqrt{|\supp(f_+) \cap \calO^c|} }{\sqrt n}
+ \frac {\sqrt \delta} 2 
\sqrt{\frac {1  + \varsigma + \varkappa + 2 \sqrt 2 \sqrt{\varkappa}} {2(1 - \varkappa)} }
\\
& \leq \frac \varsigma {\sqrt 2} 
+ \frac {\sqrt \delta} 2 
\sqrt{\frac {1  + \varsigma + \varkappa + 2 \sqrt 2 \sqrt{\varkappa}} {2(1 - \varkappa)} }.
\end{align*}
\end{proof}

\section{Dichotomy yields lower spectral gap}

Henceforth, we assume that \cref{ConditionConn} holds. 

\begin{condition}
\label{ConditionConn}
\quad 
\begin{enumerate}
\item  Let $\rho_1, \ldots, \rho_d: V\to V$ be permutations such that 
$T = P_{\rho_1} + \cdots + P_{\rho_d}$.
\item 
If $\calG$ admits an index two subgroup, then let $\nu$ denote the largest element of $[0, 1]$ such that for any subgroup $H$ of index two in $\calG$, 
there exists an integer $1\leq i_H \leq d$ such that 
$$
\Sigma_{\rho_{i_H}, \scrO} \geq \nu$$
holds for any orbit $\scrO$ of the action of $H$ on $V$. 
\end{enumerate}
\end{condition}

\begin{proposition}
[Dichotomy yields lower spectral gap]
\label{Lemma:DichotomyYieldsLowerBdd}
Suppose $\varkappa$ lies in $[0,1/260]$ and 
$$
\xi 
\leq 
\begin{cases}
\frac 45 & \text{ if } \nu = 1, \\
\frac {123}{1000} & \text{ if } \nu \geq 1/2.
\end{cases}
$$
If $$
\langle f_+, (\tau \cdot f)_+\rangle 
\in 
(\delta \langle f_+, f_+\rangle , (1-\delta) \langle f_+, f_+\rangle )
$$
holds for no $\tau \in \calG$, 
then the bounds 
$$
\varkappa \geq \frac \beta 2 \geq
\begin{cases}
\frac 1{10d}
& \text{ if } \nu = 1, \\
\frac {1}{50000d} & \text{ if } \nu \geq 1/2
\end{cases}
$$
hold.
\end{proposition}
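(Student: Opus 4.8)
The plan is to run the Fre\u{\i}man dichotomy to its conclusion and then contradict non-bipartiteness (encoded in \cref{Condition22}) unless $\varkappa$ is large. By hypothesis the dichotomy holds, so \cref{Lemma:SubgrpOfIndex2} applies and $H$ is an index two subgroup of $\calG$; by \cref{Condition22} no index two subgroup acts transitively, so $V$ splits into exactly two $H$-orbits $\calO_1, \calO_2$. By \cref{Lemma:Suppf+EqualsCalO} there is one orbit, say $\calO$, with $\supp(f_+)$ almost contained in $\calO$: the quantity $|\supp(f_+)\cap\calO^c|/n$ is bounded by the right-hand side of \eqref{Eqn:Suppf+EqualsCalO}, which is small when $\varkappa$ (hence $\varsigma$) and $\delta$ are small. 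Dually, since $|\supp(f_+)|$ is close to $n/2$ (from \cref{Lemma:DiffOfL2NormOFf+f-}), the complement $\supp(f_-) = V\setminus\supp(f_+)$ (using \cref{EigenFunction}(2)) almost fills $\calO^c$, i.e. $|\supp(f_-)\cap\calO|/n$ is also small. So the sign pattern of $f$ is approximately the $H$-orbit partition.

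Next I would bring in \cref{ConditionConn}: there is a permutation $\rho = \rho_{i_H}$ among $\rho_1,\dots,\rho_d$ with $\Sigma_{\rho,\scrO}\geq\nu$ for every $H$-orbit $\scrO$; in particular $\Sigma_{\rho,\calO}\geq\nu$. Now apply \cref{Lemma:SumA11A22} with $\calA = \calO$ and $\calB = \supp(f_+)$: the symmetric difference $|\calO\triangle\supp(f_+)|/n = (|\supp(f_+)\cap\calO^c| + |\supp(f_-)\cap\calO|)/n$ is controlled by the two small quantities above, so $\Sigma_{\rho,\supp(f_+)}\geq\nu - \sqrt2\sqrt{|\calO\triangle\supp(f_+)|/n}$, which stays bounded away from $0$ (at least, say, $\approx \nu/2$ or better) once $\xi$, and hence $\delta$, is taken small enough — this is exactly where the two regimes $\nu=1$ and $\nu\geq 1/2$ and the corresponding numerical thresholds on $\xi$ enter. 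Unwinding the definition, $\Sigma_{\rho,\supp(f_+)} = (\langle P_\rho 1_{f>0},1_{f>0}\rangle + \langle P_\rho 1_{f<0},1_{f<0}\rangle)/n$ is a single-permutation contribution to $dn\beta = \langle T1_{f>0},1_{f>0}\rangle + \langle T1_{f<0},1_{f<0}\rangle$, since $T = \sum_i P_{\rho_i}$ and each summand is nonnegative. Hence $d\beta \geq \Sigma_{\rho,\supp(f_+)} \geq c(\nu)$ for an explicit constant, giving $\beta \geq c(\nu)/d$.

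Finally, plug this into \cref{Lemma:LowerBddForKappaGeneral}: $\varkappa \geq \beta/2 \geq c(\nu)/(2d)$, which is the claimed bound with $c(1)$ chosen to give $1/10$ and $c(1/2)$ chosen to give $1/50000$ (after checking these are consistent with the standing assumption $\varkappa \leq 1/260$, so that all the earlier lemmas invoked are in force — if $\varkappa > 1/260$ the conclusion is immediate and much stronger). I expect the main obstacle to be purely quantitative: propagating the two small error terms from \eqref{Eqn:Suppf+EqualsCalO} and its $f_-$ analogue through \cref{Lemma:SumA11A22} and verifying that with $\xi \leq \tfrac{123}{1000}$ (resp. $\xi\leq\tfrac45$) the resulting lower bound on $\Sigma_{\rho,\supp(f_+)}$ is large enough to yield the stated constants — i.e. carefully bounding $\delta$, $\varsigma$, and the nested radicals in terms of $\xi$ and $\varkappa\leq 1/260$, rather than any conceptual difficulty.
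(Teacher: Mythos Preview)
Your proposal is correct and follows essentially the same route as the paper: invoke \cref{Lemma:SubgrpOfIndex2} and \cref{Lemma:Suppf+EqualsCalO} to get the index-two subgroup $H$ and the orbit $\calO$ that nearly coincides with $\supp(f_+)$, then use \cref{Lemma:SumA11A22} (with $\calA=\calO$, $\calB=\supp(f_+)$) together with \cref{ConditionConn} to lower-bound $\Sigma_{\rho,\supp(f_+)}$ and hence $d\beta$, and finish via \cref{Lemma:LowerBddForKappaGeneral}. The paper handles the step you flagged---bounding $|\supp(f_-)\cap\calO|$---by the identity $|\supp(f_-)\cap\calO| = |\calO| - |\supp(f_+)| + |\supp(f_+)\cap\calO^c| \leq |\supp(f_+)\cap\calO^c|$ (using $|\calO|=n/2\leq |\supp(f_+)|$), so the symmetric difference is at most $2|\supp(f_+)\cap\calO^c|$; otherwise the argument and the numerical case split are exactly as you describe.
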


\begin{proof}
By \cref{Lemma:SubgrpOfIndex2}, 
$$H = \{\tau \in \calG \,|\, 
\langle f_+, (\tau \cdot f)_+\rangle 
\geq 
(1-\delta) \| f_+\|_2^2
\}$$
forms a subgroup of $\calG$ of index two. 
By \cref{Lemma:Suppf+EqualsCalO}, for some orbit $\calO$ of the action of $H$ on $V$, 
the bound 
\begin{align*}
\frac{|\supp(f_+) \cap \calO^c|}{n}
& \leq \frac \varsigma {\sqrt 2} 
+ \frac {\sqrt \delta} 2 
\sqrt{\frac {1  + \varsigma + \varkappa + 2 \sqrt 2 \sqrt{\varkappa}} {2(1 - \varkappa)} }
\end{align*}
holds. 
By \cref{ConditionConn}(2), it follows that the set $\rho_i(\calO) \cap \calO$ contains at least $\nu |\calO|$ elements for some $1\leq i \leq d$. 
For $\pi = \rho_i$, note that 
\begin{align*}
& |\Sigma_{\pi, f>0} - \Sigma_{\pi, \calO}|\\
& =\frac 1n  | \langle \pi  1_{f>0}, 1_{f>0} \rangle + \langle \pi  1_{f<0}, 1_{f<0} \rangle 
- 
\langle \pi  1_\calO, 1_\calO \rangle  
-\langle \pi  1_{\calO^c} , 1_{\calO^c} \rangle
| \\
& \leq \frac {\sqrt 2}{\sqrt n}\sqrt{ |\supp(f_+)\cap \calO^c| + |\supp(f_-)\cap \calO|} 
\quad 
\text{(using \cref{Lemma:SumA11A22})}\\
& = \frac {\sqrt 2}{\sqrt n}\sqrt{ |\supp(f_+)\cap \calO^c| + |\calO| -  |\supp(f_+) \cap \calO|} \\
& = \frac {\sqrt 2}{\sqrt n}\sqrt{ |\supp(f_+) \cap \calO^c| + |\calO| -  |\supp(f_+) | + |\supp(f_+)\cap \calO^c|} \\
& \leq \frac {\sqrt 2}{\sqrt n}\sqrt{ 2|\supp(f_+) \cap \calO^c| } \\
& \leq 2
\left(
\frac \varsigma {\sqrt 2} 
+ \frac {\sqrt \delta} 2 
\sqrt{\frac {1  + \varsigma + \varkappa + 2 \sqrt 2 \sqrt{\varkappa}} {2(1 - \varkappa)} }
\right)^{1/2}
\end{align*}
holds, which gives 
\begin{align*}
d\beta
& = \frac 1n 
(\langle T1_{f > 0}, 1_{f > 0} \rangle + \langle T1_{f < 0}, 1_{f < 0} \rangle ) \\
& \geq \Sigma_{\pi, f>0}\\
& \geq \Sigma_{\pi, \calO}
-  2
\left(
\frac \varsigma {\sqrt 2} 
+ \frac {\sqrt \delta} 2 
\sqrt{\frac {1  + \varsigma + \varkappa + 2 \sqrt 2 \sqrt{\varkappa}} {2(1 - \varkappa)} }
\right)^{1/2}
\\
& \geq \nu 
-  2 
\left(
\frac \varsigma {\sqrt 2} 
+ \frac {\sqrt \delta} 2 
\sqrt{\frac {1  + \varsigma + \varkappa + 2 \sqrt 2 \sqrt{\varkappa}} {2(1 - \varkappa)} }
\right)^{1/2}.
\end{align*}
If $\nu =1$ and $\varkappa \leq \frac 1{260}$, then for $\xi \leq \frac 45$, we have 
$$
d\beta
\geq \frac 15 .
$$
If $\nu \geq \frac 12$ and $\varkappa\leq \frac 1{54550}$, then for $\xi \leq \frac{123}{1000}$, we have 
$$
d \beta 
\geq \frac {1}{25000}.
$$
Using \cref{Lemma:LowerBddForKappaGeneral}, 
the result follows.
\end{proof}

\section{Lack of dichotomy also yields lower spectral gap}

Consider the following condition. 
\begin{condition}
\label{Left2Right}
For any $\tau \in \calG$, there exists a permutation $\varrho_\tau: V \to V$ such that for any subsets $A, B$ of $V$, the number of edges between $A, B$ and those between $\tau(A), \varrho_\tau(B)$ are equal, i.e., 
$\langle T1_A, 1_B \rangle  = \langle T1_{\tau ( A)}, 1_{\varrho_\tau(B)} \rangle$ holds. 
\end{condition}

\begin{proposition}
\label{Lemma:LackOfDichotomyGivesLowerSpecGap}
Assume that \cref{Left2Right} holds. 
If $$
\langle f_+, (\tau \cdot f)_+\rangle 
\in 
(\delta \langle f_+, f_+\rangle , (1-\delta) \langle f_+, f_+\rangle )
$$
holds for some $\tau \in \calG$, 
then 
$$1+ \mu 
\geq 
\begin{cases}
\frac {1- \mu_2}{2521}
& \text{ if } \nu = 1, \\
\frac {1- \mu_2}{54632} & \text{ if } \nu \geq \frac 12. 
\end{cases}
$$
\end{proposition}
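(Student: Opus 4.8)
The plan is to show that the intermediate translate forces the bipartiteness constant $\beta=\beta_f$ of $f$ to be comparable to $1-\mu_2$, and then to invoke \cref{Lemma:LowerBddForKappaGeneral} in the form $\varkappa\ge\frac{2\beta}{1-\mu_2+2\beta}$ to convert this into a lower bound on $\varkappa=\frac{1+\mu}{1-\mu_2}$. First I would dispose of the easy range: if $\varkappa>\frac1{260}$ then $1+\mu=\varkappa(1-\mu_2)>\frac{1-\mu_2}{260}$, which already beats both displayed constants, so from now on $\varkappa\le\frac1{260}$ and every preparatory lemma of the earlier sections is available. Fix $\tau\in\calG$ with $\langle f_+,(\tau\cdot f)_+\rangle\in(\delta\|f_+\|_2^2,(1-\delta)\|f_+\|_2^2)$ and put $S=\supp(f_+)$, $S'=\tau(S)=\supp((\tau\cdot f)_+)$. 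Using \cref{Lemma:InnerProdOfPlusPartWithPlusPartOfATranslate} to compare $\langle f_+,(\tau\cdot f)_+\rangle$ with $\tfrac{|S\cap S'|}{n}\|f\|_2^2$ up to an error that is $O(\sqrt\varkappa)\|f\|_2^2$, together with the estimates of \cref{Lemma:DiffOfL2NormOFf+f-} and \cref{Lemma:ControllingL2NormFromBelow} for $\|f_+\|_2$, I would convert the two-sided correlation bound into explicit constants $c_1<c_2$ with $c_1 n\le|S\cap S'|\le c_2 n$, where $c_1,c_2$ (depending only on $\varkappa$ and the fixed choice of $\xi$) stay bounded away from $0$ and from $\tfrac12$; combined with $\frac{1-\varsigma}{2}\le\frac{|S|}n\le\frac{1+\varsigma}{2}$ from \cref{Lemma:DiffOfL2NormOFf+f-}, this makes all four blocks of the partition below have size $\Theta(n)$.

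Next I would set up the four-block decomposition $A=S\cap S'$, $B=S\setminus S'$, $C=S'\setminus S$, $D=(S\cup S')^c$, so that $|A|,|B|=|C|,|D|$ are each at least a fixed positive multiple of $n$. Since \cref{Lemma:LowerBddForKappaGeneral} gives $\beta=\frac{\langle T1_S,1_S\rangle+\langle T1_{S^c},1_{S^c}\rangle}{dn}\le\frac{\varkappa}{1-\varkappa}$, the sets $S$ and $S^c$ are near-independent, and expanding $\langle T1_S,1_S\rangle$ and $\langle T1_{S^c},1_{S^c}\rangle$ over the blocks shows that the edge mass inside $A$, inside $B$, inside $C$, inside $D$, between $A$ and $B$, and between $C$ and $D$ totals at most $\tfrac12 dn\beta$. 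I would then invoke \cref{Left2Right} for this $\tau$: the resulting permutation $\varrho_\tau$ satisfies $\langle T1_S,1_S\rangle=\langle T1_{S'},1_{\varrho_\tau S}\rangle$ and $\langle T1_{S^c},1_{S^c}\rangle=\langle T1_{(S')^c},1_{(\varrho_\tau S)^c}\rangle$, which transfers the near-independence of $\{S,S^c\}$ to the translated partition (for Cayley graphs $\varrho_\tau=\tau$ and this is literally that $S'$ and $(S')^c$ are near-independent), making also the edge mass between $A$ and $C$ and between $B$ and $D$ negligible. Hence the \emph{complementary} cut given by $A\cup D$ satisfies $\langle T1_{A\cup D},1_{B\cup C}\rangle\le O(dn\beta)$.

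The reason for insisting on a small cut between complementary sets is that $\mu_2<1$ (guaranteed by \cref{SelfAdjConnNonBipartiteRegular}) makes $\Gamma$ a spectral expander: for $\emptyset\ne X\subsetneq V$, writing $1_X=\tfrac{|X|}n1_V+g$ with $g\perp1_V$ and using $\langle Tg,g\rangle\le d\mu_2\|g\|_2^2$ gives $\langle T1_X,1_{X^c}\rangle\ge\tfrac{d|X||X^c|}n(1-\mu_2)$. Applying this with $X=A\cup D$, where $\tfrac{|X|}n=1-2t$ and $\tfrac{|X^c|}n=2t$ for $t=\tfrac{|S|}n-\tfrac{|A|}n$ lying in a fixed subinterval of $(0,\tfrac12)$, yields $O(dn\beta)\ge\tfrac{d|X||X^c|}n(1-\mu_2)=2t(1-2t)(1-\mu_2)\,dn$, hence $\beta\ge\kappa_1(1-\mu_2)$ for an absolute constant $\kappa_1>0$ determined by the chosen $\xi$. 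Substituting into the bound $\varkappa\ge\frac{2\beta}{1-\mu_2+2\beta}$ of \cref{Lemma:LowerBddForKappaGeneral} gives $\varkappa\ge\frac{2\kappa_1}{1+2\kappa_1}$, so $1+\mu=\varkappa(1-\mu_2)\ge\frac{2\kappa_1}{1+2\kappa_1}(1-\mu_2)$; taking $\xi=\tfrac45$ when $\nu=1$ and $\xi=\tfrac{123}{1000}$ when $\nu\ge\tfrac12$ and tracking constants produces the two claimed inequalities.

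The hard part will be the non-Cayley families: \cref{Left2Right} only delivers the twisted identity $\langle T1_S,1_S\rangle=\langle T1_{\tau S},1_{\varrho_\tau S}\rangle$ rather than near-independence of $\tau S$ on the nose, so the ``second partition'' has to be built from the pair $(\tau S,\varrho_\tau S)$ and then reconciled with the partition coming from $S$, all while keeping each block of size $\Theta(n)$ and keeping track of the $O(\sqrt\varkappa)$ and $O(dn\beta)$ error terms finely enough for the explicit constants to survive; this is where essentially all of the counting effort goes. It is also worth flagging that $\nu$ plays no structural role in this proposition: it only limits how large $\xi$, hence $\delta$, hence the gap $c_2-c_1$, may be taken, which is precisely why the two displayed constants differ.
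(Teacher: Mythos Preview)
Your proposal follows essentially the paper's strategy: bound $|S\cap\tau S|$ on both sides via \cref{Lemma:InnerProdOfPlusPartWithPlusPartOfATranslate}, produce a cut with boundary $O(dn\beta)$ whose two sides each have size bounded away from $0$ and $n$, deduce that $\beta$ is at least a positive multiple of $1-\mu_2$, and finish with \cref{Lemma:LowerBddForKappaGeneral}. Your direct spectral bound $\langle T1_X,1_{X^c}\rangle\ge\tfrac{d|X||X^c|}{n}(1-\mu_2)$ plays the same role as the paper's use of $\ecc_T$ together with the Cheeger inequality $2\ecc_T\ge 1-\mu_2$.

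The one place where you leave a genuine gap is the case $\varrho_\tau\ne\tau$, which you flag as ``the hard part'' and propose to attack by reconciling the partitions coming from $(\tau S,\varrho_\tau S)$ and from $S$ with careful error tracking. The paper's resolution is much simpler than you anticipate: build $\varrho_\tau$ into the cut from the start by taking
\[
X_1=(S\cap\tau S)\cup(S^c\cap\varrho_\tau S^c),\qquad X_2=(S\cap\tau S^c)\cup(S^c\cap\varrho_\tau S),
\]
which partition $V$. Applying the elementary inclusion
\[
\bigl((A\cap C)\cup(B\cap D)\bigr)\times\bigl((A\cap C)\cup(B\cap D)\bigr)^c\subseteq(A\times B^c)\cup(B\times A^c)\cup(C\times D^c)\cup(D\times C^c)
\]
with $(A,B,C,D)=(S,S^c,\tau S,\varrho_\tau S^c)$ bounds $\langle T1_{X_1},1_{X_1^c}\rangle$ by four terms: the first two sum to $dn\beta$, and the last two are $\langle T1_{\tau S},1_{\varrho_\tau S}\rangle+\langle T1_{\varrho_\tau S^c},1_{\tau S^c}\rangle$, which \cref{Left2Right} converts back to $dn\beta$ exactly. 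Thus $\langle T1_{X_i},1_{X_i^c}\rangle\le 2dn\beta$ with no reconciliation or additional error terms whatsoever. The size lower bounds $|X_1|\ge|S\cap\tau S|$ and $|X_2|\ge|S\cap\tau S^c|$ come from the $\tau$-pieces alone, so the bounds you already derived on those two quantities suffice unchanged; the general case therefore costs nothing extra once the sets are chosen asymmetrically in $\tau$ and $\varrho_\tau$.
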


\begin{proof}
Assume that $\varkappa \leq \frac 1{260}$. Take 
$$
\xi 
= 
\begin{cases}
\frac 45 & \text{ if } \nu = 1, \\
\frac {123}{1000} & \text{ if } \nu \geq 1/2.
\end{cases}
$$
Write 
$X = \supp(f_+) \cap \tau(\supp(f_+))$. 
Note that 
\begin{align*}
\frac{|X|}n \|f\|_2^2 
& \geq 
\langle f_+, (\tau\cdot f)_+\rangle
- 
\left(  \frac{|X|}n \varkappa+ 
2\frac{\sqrt{|X|}}{\sqrt n}
\sqrt \varkappa 
+ 
\varkappa 
\right) 
\|f\|_2^2 
\quad 
\text{(by \cref{Lemma:InnerProdOfPlusPartWithPlusPartOfATranslate})} \\
& >  
\delta \|f_+\|_2^2
- 
\left(  \frac{|X|}n \varkappa+ 
2\frac{\sqrt{|X|}}{\sqrt n}
\sqrt \varkappa 
+ 
\varkappa 
\right) 
\|f\|_2^2  \\
& \geq 
\delta \vartheta^2 \|f\|_2^2
- 
\left(  \frac{|X|}n \varkappa+ 
2\frac{\sqrt{|X|}}{\sqrt n}
\sqrt \varkappa 
+ 
\varkappa 
\right) 
\|f\|_2^2 
\quad 
\text{(by \cref{Lemma:ControllingL2NormFromBelow})},
\end{align*}
which implies 
\begin{align*}
\frac{|X|}n
& \geq 
\delta \vartheta^2 
- 
\left(  \frac{|X|}n \varkappa+ 
2\frac{\sqrt{|X|}}{\sqrt n}
\sqrt \varkappa 
+ 
\varkappa 
\right) .
\end{align*}
This shows that 
\begin{align*}
( 1 + \varkappa) \frac{|X|}n 
+
2\frac{\sqrt{|X|}}{\sqrt n}
\sqrt \varkappa 
- 
(\delta  \vartheta^2 - \varkappa) 
\geq 0
\end{align*}
Noting that $\varkappa + (1+ \varkappa) (\delta \vartheta^2 - \varkappa) \geq 0$ if $\varkappa \in [0,1/260]$, we obtain 
\begin{align*}
 \frac{\sqrt{|X|}}{\sqrt n }
& \geq 
\frac {- \sqrt \varkappa + \sqrt{\varkappa + (1+ \varkappa) (\delta \vartheta^2 - \varkappa)}}{1+ \varkappa} \\
& = 
\frac {(1+ \varkappa) (\delta \vartheta^2 - \varkappa)}{( \sqrt \varkappa + \sqrt{\varkappa + (1+ \varkappa) (\delta \vartheta^2 - \varkappa)})(1+ \varkappa)} \\
& = 
\frac {\delta \vartheta^2 - \varkappa}{\sqrt \varkappa + \sqrt{\varkappa + (1+ \varkappa) (\delta \vartheta^2 - \varkappa)}} .
\end{align*}
Moreover, we have
\begin{align*}
\frac{|X|}n \|f\|_2^2 
& \leq 
\langle f_+, (\tau\cdot f)_+\rangle
+
\left(  \frac{|X|}n \varkappa+ 
2\frac{\sqrt{|X|}}{\sqrt n}
\sqrt \varkappa 
+ 
\varkappa 
\right) 
\|f\|_2^2 
\quad 
\text{(by \cref{Lemma:InnerProdOfPlusPartWithPlusPartOfATranslate})} \\
& \leq 
(1- \delta) \|f_+\|_2^2
+
\left(  \frac{|X|}n \varkappa+ 
2\frac{\sqrt{|X|}}{\sqrt n}
\sqrt \varkappa 
+ 
\varkappa 
\right) 
\|f\|_2^2  \\
& \leq 
(1- \delta) 
\frac {1  + \varsigma + \varkappa + 2 \sqrt 2 \sqrt{\varkappa}} 2 \|f\|_2^2
+
\left(  \frac{|X|}n \varkappa+ 
2\frac{\sqrt{|X|}}{\sqrt n}
\sqrt \varkappa 
+ 
\varkappa 
\right) 
\|f\|_2^2  
\quad 
\text{(by \cref{Lemma:DiffOfL2NormOFf+f-})},
\end{align*}
which gives 
\begin{align*}
( 1- \varkappa)
\frac{|X|}n 
- 
2\frac{\sqrt{|X|}}{\sqrt n}
\sqrt \varkappa 
- 
\left(
(1- \delta) \frac {1  + \varsigma + \varkappa + 2 \sqrt 2 \sqrt{\varkappa}} 2 
+\varkappa
\right)
& \leq 0 .
\end{align*}
This yields
\begin{align*}
 \frac{\sqrt{|X|}}{\sqrt n }
& \leq 
\frac
{\sqrt \varkappa + \sqrt{\varkappa  + ( 1 - \varkappa) 
\left(
(1- \delta) \frac {1  + \varsigma + \varkappa + 2 \sqrt 2 \sqrt{\varkappa}} 2 
+\varkappa
\right)
}}
{1 - \varkappa}
.
\end{align*}
Consequently, we obtain 
\begin{align*}
\frac{| \supp(f_+) \cap \tau(\supp(f_-))|}n 
& = \frac{| \supp(f_+) \cap \tau(\supp(f_+))^c|} n \\
& = \frac{| \supp(f_+)|} n 
- \frac{| \supp(f_+) \cap \tau(\supp(f_+))|} n \\
& \geq  \frac 12
- \frac{| \supp(f_+) \cap \tau(\supp(f_+))|} n \\
& \geq  \frac 12 - 
\left(
\frac
{\sqrt \varkappa + \sqrt{\varkappa  + ( 1 - \varkappa) 
\left(
(1- \delta) \frac {1  + \varsigma + \varkappa + 2 \sqrt 2 \sqrt{\varkappa}} 2 
+\varkappa
\right)
}}
{1 - \varkappa}
\right)^2.
\end{align*}
For any subsets $A, B, C, D$ of $V$, note that 
\begin{align*}
&
((A\cap C) \cup (B \cap D))
\times  
((A\cap C) \cup (B \cap D))^c
\\
& \subseteq 
\bigl(
(A
\times  
B^c)
\cup 
(B
\times  
A^c)
\bigr)
\cup 
\bigl(
(C
\times  
D^c)
\cup 
(D
\times  
C^c)
\bigr)
\end{align*}
holds. 
It implies that 
\begin{align*}
& 
\langle T1_{
(\supp(f_+)\cap \tau(\supp(f_+)) ) \cup (\supp(f_-) \cap \varsigma_\tau (\supp(f_-)))
}, 
1_{
((\supp(f_+)\cap \tau(\supp(f_+)) ) \cup (\supp(f_-) \cap \varsigma_\tau (\supp(f_-))))^c }
\rangle \\
& \leq 
\langle T1_{\supp(f_+)}, 1_{\supp(f_-)^c}\rangle  
+ \langle T1_{\supp(f_-)}, 1_{\supp(f_+)^c} \rangle  \\
& + \langle T1_{\tau(\supp(f_+))} , 1_{\varsigma_\tau (\supp(f_-))^c} \rangle +\langle T1_{\varsigma_\tau (\supp(f_-))},  1_{\tau(\supp(f_+)) ^c}\rangle \\
& = 2(\langle T1_{\supp(f_+)}, 1_{\supp(f_-)^c}\rangle  + \langle T1_{\supp(f_-)}, 1_{\supp(f_+)^c} \rangle ) \\
& + \langle T1_{\tau(\supp(f_+))} , 1_{\varsigma_\tau (\supp(f_-))^c} \rangle +\langle T1_{\varsigma_\tau (\supp(f_-))},  1_{\tau(\supp(f_+)) ^c}\rangle \\
& = 2(\langle T1_{\supp(f_+)}, 1_{\supp(f_+)}\rangle  + \langle T1_{\supp(f_-)}, 1_{\supp(f_-)} \rangle ) , \\
& 
\langle T1_{
(\supp(f_+)\cap \tau(\supp(f_-)) ) \cup (\supp(f_-) \cap \varsigma_\tau (\supp(f_+)))
}, 
1_{
((\supp(f_+)\cap \tau(\supp(f_-)) ) \cup (\supp(f_-) \cap \varsigma_\tau (\supp(f_+))))^c }
\rangle \\
& \leq 
\langle T1_{\supp(f_+)}, 1_{\supp(f_-)^c}\rangle  + \langle T1_{\supp(f_-)}, 1_{\supp(f_+)^c} \rangle  \\
& + \langle T1_{\tau(\supp(f_-))} , 1_{\varsigma_\tau (\supp(f_+))^c} \rangle +\langle T1_{\varsigma_\tau (\supp(f_+))},  1_{\tau(\supp(f_-)) ^c}\rangle \\
& = 2(\langle T1_{\supp(f_+)}, 1_{\supp(f_-)^c}\rangle  + \langle T1_{\supp(f_-)}, 1_{\supp(f_+)^c} \rangle ) 
\quad 
\text{(by \cref{Left2Right})}\\
& = 2(\langle T1_{\supp(f_+)}, 1_{\supp(f_+)}\rangle  + \langle T1_{\supp(f_-)}, 1_{\supp(f_-)} \rangle ) .
\end{align*}
Note that the sets 
$$(\supp(f_+)\cap \tau(\supp(f_+)) ) \cup (\supp(f_-) \cap \varsigma_\tau (\supp(f_-)))
, $$
$$(\supp(f_+)\cap \tau(\supp(f_-)) ) \cup (\supp(f_-) \cap \varsigma_\tau (\supp(f_+)))
$$
are disjoint. 
Consequently, 
\begin{align*}
& \ecc_T d 
\min
\{
|(\supp(f_+)\cap \tau(\supp(f_+)) ) \cup (\supp(f_-) \cap \varsigma_\tau (\supp(f_-)))|
, \\
& \qquad 
|(\supp(f_+)\cap \tau(\supp(f_-)) ) \cup (\supp(f_-) \cap \varsigma_\tau (\supp(f_+)))|
\} \\
& \leq 2(\langle T1_{\supp(f_+)}, 1_{\supp(f_+)}\rangle  + \langle T1_{\supp(f_-)}, 1_{\supp(f_-)} \rangle )\\
& = 2d\beta n .
\end{align*}
Using \cref{Lemma:LowerBddForKappaGeneral}, we obtain 
\begin{align*}
(1-\mu_2) \frac {\varkappa}{1 - \varkappa}
& \geq 2 \beta \\
& \geq \ecc_T 
\min
\bigg\{
\frac
{|(\supp(f_+)\cap \tau(\supp(f_+)) ) \cup (\supp(f_-) \cap \varsigma_\tau (\supp(f_-)))|}n
, \\
& \qquad 
\frac{|(\supp(f_+)\cap \tau(\supp(f_-)) ) \cup (\supp(f_-) \cap \varsigma_\tau (\supp(f_+)))|}n
\bigg\} \\
& \geq \ecc_T 
\min
\bigg\{
\frac
{|(\supp(f_+)\cap \tau(\supp(f_+)) ) |}n
,
\frac{|(\supp(f_+)\cap \tau(\supp(f_-)) ) |}n
\bigg\} \\
& \geq \ecc_T
\min
\bigg\{
\left(
\frac {\delta \vartheta^2 - \varkappa}{\sqrt \varkappa + \sqrt{\varkappa + (1+ \varkappa) (\delta \vartheta^2 - \varkappa)}} 
\right)^2 
, \\
& \qquad 	
\frac 12 - 
\left(
\frac
{\sqrt \varkappa + \sqrt{\varkappa  + ( 1 - \varkappa) 
\left(
(1- \delta) \frac { 1  + \varsigma + \varkappa + 2 \sqrt 2 \sqrt{\varkappa}} 2 
+\varkappa
\right)
}}
{1 - \varkappa}
\right)^2
\bigg\}.
\end{align*}
If $\nu = 1$, we consider the case $\varkappa < \frac 1{2520}$, which implies that  
$$ (1- \mu_2) \frac \varkappa {1- \varkappa}
\geq 
\frac 2 {2520} \ecc_T
\geq \frac 1{2520}(1- \mu_2),
$$
and hence $\varkappa \geq \frac 1{2521}$. 
If $\nu \geq \frac 12$, we consider the case $\varkappa < \frac 1{54631}$, which gives the bound 
$$(1- \mu_2) \frac \varkappa {1- \varkappa}
\geq 
\frac 2 {54631} \ecc_T
\geq \frac 1{54631}(1- \mu_2),
$$
and hence $\varkappa \geq \frac1{54632}$. 
This completes the proof. 
\end{proof}

\section{Nowhere vanishing eigenfunctions}

In the following, we do not assume any of \cref{EigenFunction}, \cref{Condition22}, \cref{ConditionConn}. 
We assume only that \cref{SelfAdjConnNonBipartiteRegular}, and \cref{Left2Right} hold.

\begin{lemma}
\label{Lemma:NowhereVanishingEigenfunctions}
Suppose \cref{SelfAdjConnNonBipartiteRegular}, and \cref{Left2Right} hold.  
Let $d\tilde \mu$ be an eigenvalue of $T$ with 
$$ 1 + \tilde \mu < 1 - \mu_2.$$
Then the $d\tilde \mu$-eigenspace of $T$ contains an eigenfunction which is nonzero everywhere on $V$. 
Further, if $\varrho_\tau = \tau$ for all $\tau \in \calG$, then any eigenspace of $T$ contains an eigenfunction which is nonzero everywhere on $V$. 
\end{lemma}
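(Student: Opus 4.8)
\emph{The plan.} The hypothesis \cref{Left2Right} is a ``combinatorial'' one, and what it delivers directly is only that the $\calG$-action on $\ell^2(V)$ commutes with $T^2$, not with $T$. I would then use the spectral gap hypothesis $1+\tilde\mu<1-\mu_2$ to promote this to an action of $\calG$ on the single $d\tilde\mu$-eigenspace of $T$, and finish with the standard argument that a nonzero invariant subspace of functions on a transitive $\calG$-set contains a nowhere vanishing function. To set up, note that $P_\tau 1_A=1_{\tau(A)}$ and $P_{\varrho_\tau}1_B=1_{\varrho_\tau(B)}$, so, since indicators of singletons span $\ell^2(V)$, the relation $\langle T1_A,1_B\rangle=\langle T1_{\tau(A)},1_{\varrho_\tau(B)}\rangle$ for all $A,B$ is equivalent to $T=P_{\varrho_\tau}^{*}TP_\tau$, i.e.\ to $P_{\varrho_\tau}T=TP_\tau$. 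Taking adjoints, and using that $T$ is self-adjoint and each $P_\sigma$ is unitary, gives $P_\tau T=TP_{\varrho_\tau}$ as well, and multiplying the two yields
$$
T^2P_\tau=T(P_{\varrho_\tau}T)=(TP_{\varrho_\tau})T=P_\tau T^2\qquad(\tau\in\calG),
$$
so $\calG$ commutes with $T^2$ and preserves every eigenspace of $T^2$. If moreover $\varrho_\tau=\tau$ for all $\tau$, then the first relation already reads $P_\tau T=TP_\tau$, so then $\calG$ commutes with $T$ itself.

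Next, fix the eigenvalue $d\tilde\mu$; let $E\neq\{0\}$ be its $T$-eigenspace and let $W\supseteq E$ be the $d^2\tilde\mu^2$-eigenspace of $T^2$, a $\calG$-invariant subspace. If $W=E$ we are in the good case. Otherwise $W$ contains an eigenvector of $T$ whose eigenvalue squares to $d^2\tilde\mu^2$ but is not $d\tilde\mu$, hence equals $-d\tilde\mu$ (in particular $\tilde\mu\neq0$), and $W=E\oplus F$ orthogonally, where $F$ is the nonzero $(-d\tilde\mu)$-eigenspace. Then $-d\tilde\mu$ is an eigenvalue of $T$ with $-d\tilde\mu>d\mu_2$ (this is exactly $1+\tilde\mu<1-\mu_2$), and since $d$ is the simple top eigenvalue of $T$ (because $\mu_2\neq1$) there is no eigenvalue in the open interval $(d\mu_2,d)$; hence $-d\tilde\mu=d$ and $F=\bbC 1_V$. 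In either case $E$ is the orthogonal complement inside $W$ of a $\calG$-invariant subspace (either $\{0\}$ or $\bbC 1_V$, the latter fixed by $\calG$ since $\calG$ acts by permutations), and as $\calG$ acts unitarily, $E$ is $\calG$-invariant.

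It remains to produce a nowhere vanishing function in a nonzero $\calG$-invariant subspace $U\subseteq\ell^2(V)$, using that $\calG$ acts transitively on $V$. Choose $0\neq\phi\in U$ minimizing $|\{v:\phi(v)=0\}|$; if this zero set is nonempty, pick $v$ in it and $w$ with $\phi(w)\neq0$, and $\tau\in\calG$ with $\tau w=v$, so that $(\tau\cdot\phi)(v)=\phi(w)\neq0$. For all but finitely many scalars $c$, the zero set of $\phi+c(\tau\cdot\phi)\in U$ equals the common zero set of $\phi$ and $\tau\cdot\phi$, which is a proper subset of the zero set of $\phi$ (it omits $v$), and in particular $\phi+c(\tau\cdot\phi)\neq0$; this contradicts minimality, so $\phi$ is nowhere zero. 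Applying this with $U=E$ (working over $\bbR$, legitimate since $T$ is a real symmetric matrix) gives the desired nowhere vanishing $d\tilde\mu$-eigenfunction. For the final assertion, when $\varrho_\tau=\tau$ the group $\calG$ commutes with $T$, hence preserves every eigenspace of $T$, and the same argument applies to each of them.

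The step I expect to be the crux is the middle one: because \cref{Left2Right} yields only commutation with $T^2$, one must see that the spectral gap $1+\tilde\mu<1-\mu_2$ is precisely the input isolating the $d\tilde\mu$-eigenspace of $T$ inside a $\calG$-invariant $T^2$-eigenspace, by excluding $-d\tilde\mu$ from the spectrum apart from the harmless Perron value $d$. The transitivity argument at the end, and the bookkeeping with adjoints and unitarity at the start, are routine.
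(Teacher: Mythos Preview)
Your proof is correct and follows essentially the same approach as the paper: derive from \cref{Left2Right} the intertwining relations $P_{\varrho_\tau}T=TP_\tau$ and $P_\tau T=TP_{\varrho_\tau}$, use the spectral hypothesis $1+\tilde\mu<1-\mu_2$ to conclude that the $d\tilde\mu$-eigenspace is $\calG$-stable, and then combine translates via transitivity to obtain a nowhere-vanishing eigenfunction. The only difference is packaging: the paper works vector by vector, computing $T(y\circ\tau^{-1})=d\tilde\mu\,(y\circ\varrho_\tau^{-1})$ in coordinates and showing the difference $y\circ\tau^{-1}-y\circ\varrho_\tau^{-1}$ must vanish (else it would be a $(-d\tilde\mu)$-eigenvector orthogonal to $1_V$, forcing $-\tilde\mu\le\mu_2$), whereas you argue at the level of subspaces, observing that $\calG$ commutes with $T^2$ and isolating $E$ inside the $\calG$-invariant $T^2$-eigenspace $W=E\oplus F$ by showing $F\in\{0,\bbC\,1_V\}$.
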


\begin{proof}
Let $y$ be an eigenvector of $T$ with eigenvalue $d\tilde\mu  $. 
For any $\tau \in \calG$ and $u\in V$, it follows that 
\begin{align*}
\sum_v a_{uv} (y \circ \tau^\mo )(v)
& = \sum_v a_{vu} (y \circ \tau^\mo )(v) \\
& = \sum_v a_{\tau^\mo (v) \varrho_\tau ^\mo (u)} (y \circ \tau^\mo )(v) \\
& = \sum_v a_{ \varrho_\tau ^\mo (u) \tau^\mo (v)} (y \circ \tau^\mo )(v) \\
& = d\tilde\mu   y(\varrho_\tau ^\mo (u) )\\
& =d\tilde\mu    (y\circ \varrho_\tau ^\mo) (u),
\end{align*}
\begin{align*}
\sum_v a_{uv} (y\circ \varrho_\tau ^\mo )(v)
& = \sum_v a_{\tau^\mo(u) \varrho_\tau ^\mo (v)} (y\circ \varrho_\tau ^\mo )(v)\\
& = d\tilde\mu   (y\circ \tau ^\mo) (u),
\end{align*}
which implies that $T (y \circ \tau^\mo) = d\tilde\mu   y\circ \varrho_\tau ^\mo$, 
$T (y\circ \varrho_\tau ^\mo) = d\tilde\mu   y \circ \tau^\mo $. 
Note that $y \circ \tau^\mo - y\circ \varrho_\tau ^\mo$ is orthogonal to $u$. Further, if 
$y \circ \tau^\mo - y\circ \varrho_\tau ^\mo$ is nonzero for some $\tau\in \calG$, then it would be an eigenvector for $T$ with eigenvalue $-d\tilde\mu  $, implying that $-d\tilde\mu   \leq d\mu_2$, which yields 
$1 + \tilde\mu   \geq 1 - \mu_2$, which contradicts the hypothesis. 
It follows that the elements $y \circ \tau^\mo , y\circ \varrho_\tau ^\mo$ are equal for any $\tau \in \calG$. This shows that $y\circ \tau^\mo$ is also an eigenvector of $T$ with eigenvalue $d\tilde\mu  $. 

Hence, if $f$ is an eigenfunction of $T$ with eigenvalue $d\tilde\mu $, then for any $\tau\in \calG$,  $f(\tau \cdot)$ is also an eigenfunction with the same eigenvalue. 
If $f$ is nonzero at $v$, then for any $\tau\in \calG$, there is an eigenfunction $f_\tau := f(\tau \cdot)$ with eigenvalue $d\tilde\mu $ which is nonzero at $\tau^\mo v$. 
Further, note that if $f, g$ are eigenfunctions of $T$ with eigenvalue $d\tilde\mu $, there is an element $\lambda\in \bbR$ such that the support of $f+ \lambda g$ contains $\supp(f) \cup \supp (g)$. 
Since $\calG$ acts transitively on $V$, it follows that there exist real numbers $\{\lambda_\tau\}_{\tau \in \calG}$ such that 
the support of $\sum_\tau \lambda_\tau f_\tau$ contains $V$.

Note that if $\varrho_\tau = \tau$ for all $\tau \in \calG$, then from the above argument, it follows that for any eigenfunction $y$ of $T$, $y \circ \tau^\mo$ is also an eigenfunction of $T$ with the same eigenvalue. Then, taking a suitable linear combination of the elements of an eigenspace of $T$, we would obtain an eigenfunction which is nonzero everywhere. 
\end{proof}

\begin{bibdiv}
\begin{biblist}

\bib{AlonEigenvalueExpand}{article}{
      author={Alon, N.},
       title={Eigenvalues and expanders},
        date={1986},
        ISSN={0209-9683},
     journal={Combinatorica},
      volume={6},
      number={2},
       pages={83\ndash 96},
         url={https://doi.org/10.1007/BF02579166},
        note={Theory of computing (Singer Island, Fla., 1984)},
      review={\MR{875835}},
}

\bib{AlonMilmanIsoperiIneqSupConcen}{article}{
      author={Alon, N.},
      author={Milman, V.~D.},
       title={{$\lambda_1,$} isoperimetric inequalities for graphs, and
  superconcentrators},
        date={1985},
        ISSN={0095-8956},
     journal={J. Combin. Theory Ser. B},
      volume={38},
      number={1},
       pages={73\ndash 88},
         url={https://doi.org/10.1016/0095-8956(85)90092-9},
      review={\MR{782626}},
}

\bib{BGGTExpansionSimpleLie}{article}{
      author={Breuillard, Emmanuel},
      author={Green, Ben},
      author={Guralnick, Robert},
      author={Tao, Terence},
       title={Expansion in finite simple groups of {L}ie type},
        date={2015},
        ISSN={1435-9855},
     journal={J. Eur. Math. Soc. (JEMS)},
      volume={17},
      number={6},
       pages={1367\ndash 1434},
         url={https://doi.org/10.4171/JEMS/533},
      review={\MR{3353804}},
}

\bib{BiswasCheegerCayley}{article}{
      author={Biswas, Arindam},
       title={On a {C}heeger type inequality in {C}ayley graphs of finite
  groups},
        date={2019},
        ISSN={0195-6698},
     journal={European J. Combin.},
      volume={81},
       pages={298\ndash 308},
         url={https://doi.org/10.1016/j.ejc.2019.06.009},
      review={\MR{3975766}},
}

\bib{BauerJostBipartiteNbdGraphsSpecLaplaceOp}{article}{
      author={Bauer, Frank},
      author={Jost, J\"{u}rgen},
       title={Bipartite and neighborhood graphs and the spectrum of the
  normalized graph {L}aplace operator},
        date={2013},
        ISSN={1019-8385},
     journal={Comm. Anal. Geom.},
      volume={21},
      number={4},
       pages={787\ndash 845},
         url={https://doi.org/10.4310/CAG.2013.v21.n4.a2},
      review={\MR{3078942}},
}

\bib{CheegerCayleySum}{article}{
      author={Biswas, Arindam},
      author={Saha, Jyoti~Prakash},
       title={A {C}heeger type inequality in finite {C}ayley sum graphs},
        date={2021},
     journal={Algebr. Comb.},
      volume={4},
      number={3},
       pages={517\ndash 531},
         url={https://doi.org/10.5802/alco.166},
      review={\MR{4275826}},
}

\bib{CheegerTwisted}{article}{
      author={Biswas, Arindam},
      author={Saha, Jyoti~Prakash},
       title={Spectra of twists of {C}ayley and {C}ayley sum graphs},
        date={2022},
        ISSN={0196-8858},
     journal={Adv. in Appl. Math.},
      volume={132},
       pages={Paper No. 102272, 34},
         url={https://doi.org/10.1016/j.aam.2021.102272},
      review={\MR{4327334}},
}

\bib{VertexTra}{article}{
      author={Biswas, Arindam},
      author={Saha, Jyoti~Prakash},
       title={A spectral bound for vertex-transitive graphs and their spanning
  subgraphs},
        date={2023},
     journal={Algebr. Comb.},
      volume={6},
      number={3},
       pages={689\ndash 706},
         url={https://doi.org/10.5802/alco.278},
}

\bib{DodziukDifferenceEqnIsoperimetricIneq}{article}{
      author={Dodziuk, Jozef},
       title={Difference equations, isoperimetric inequality and transience of
  certain random walks},
        date={1984},
        ISSN={0002-9947},
     journal={Trans. Amer. Math. Soc.},
      volume={284},
      number={2},
       pages={787\ndash 794},
         url={https://doi.org/10.2307/1999107},
      review={\MR{743744}},
}

\bib{DesaiRaoCharacSmallestEigenvalue}{article}{
      author={Desai, Madhav},
      author={Rao, Vasant},
       title={A characterization of the smallest eigenvalue of a graph},
        date={1994},
        ISSN={0364-9024},
     journal={J. Graph Theory},
      volume={18},
      number={2},
       pages={181\ndash 194},
         url={https://doi.org/10.1002/jgt.3190180210},
      review={\MR{1258251}},
}

\bib{FreimanGroupsInverseProb}{incollection}{
      author={Fre\u{\i}man, G.~A.},
       title={Groups and the inverse problems of additive number theory},
        date={1973},
   booktitle={Number-theoretic studies in the {M}arkov spectrum and in the
  structural theory of set addition ({R}ussian)},
   publisher={Kalinin. Gos. Univ., Moscow},
       pages={175\ndash 183},
      review={\MR{0435006}},
}

\bib{HuLiuVertexIsoperimetrySignedGraph}{unpublished}{
      author={Hu, Chunyang},
      author={Liu, Shiping},
       title={Vertex isoperimetry on signed graphs and spectra of non-bipartite
  {C}ayley graphs},
        date={2023},
        note={Available at \url{https://arxiv.org/abs/2306.05306}},
}

\bib{LiLiuNonTrivExtremalEigenvalues}{unpublished}{
      author={Li, Wenbo},
      author={Liu, Shiping},
       title={On the nontrivial extremal eigenvalues of graphs},
        date={2023},
        note={Available at \url{https://arxiv.org/abs/2310.17520}},
}

\bib{CayleyBottomBipartite}{article}{
      author={Moorman, Nina},
      author={Ralli, Peter},
      author={Tetali, Prasad},
       title={On the bipartiteness constant and expansion of {C}ayley graphs},
        date={2022},
        ISSN={0195-6698},
     journal={European J. Combin.},
      volume={102},
       pages={Paper No. 103481},
         url={https://doi.org/10.1016/j.ejc.2021.103481},
      review={\MR{4345273}},
}

\bib{LowerSpecGap}{unpublished}{
      author={Saha, Jyoti~Prakash},
       title={A {C}heeger inequality for the lower spectral gap},
        date={2023},
        note={Available at \url{https://arxiv.org/abs/2306.04436}},
}

\bib{TrevisanMaxCutTheSmallestEigenVal}{article}{
      author={Trevisan, Luca},
       title={Max cut and the smallest eigenvalue},
        date={2012},
        ISSN={0097-5397},
     journal={SIAM J. Comput.},
      volume={41},
      number={6},
       pages={1769\ndash 1786},
         url={https://doi.org/10.1137/090773714},
      review={\MR{3029271}},
}

\end{biblist}
\end{bibdiv}

\end{document}